\newcommand{\bR}{{\mathbb R}}
\newcommand{\bC}{{\mathbb C}}
\newcommand{\kC}{{\mathcal C}}
\newcommand{\kN}{{\mathcal N}}
\newcommand{\kH}{{\mathcal H}}
\newcommand{\gotH}{{\mathfrak H}}
\newcommand{\gG}{{\Gamma}}
\newcommand{\bed}{\begin{displaymath}}
\newcommand{\eed}{\end{displaymath}}
\newcommand{\gga}{{\gamma}}
\newcommand{\gT}{{\Theta}}
\newcommand{\gL}{{\Lambda}}
\newcommand\diag{\mathop{diag}}
\newcommand{\ba}{\begin{array}}
\newcommand{\ea}{\end{array}}
 \newcommand\dom{\operatorname{dom}}
\newcommand{\slim}{\,\mbox{\rm s-}\hspace{-2pt} \lim}
\newcommand{\ran}{{\mathrm{ran\,}}}
\newcommand{\ack}{\section*{Acknowledgments}}
\newtheorem{theorem}{Theorem}[section]
 \newtheorem{corollary}[theorem]{Corollary}
 \newtheorem{proposition}[theorem]{Proposition}
 \theoremstyle{definition}
 \newtheorem{definition}[theorem]{Definition}
 \theoremstyle{remark}
 \newtheorem{remark}[theorem]{Remark}
 \numberwithin{equation}{section}
\def\cC{{\mathcal C}}
\def\cH{{\mathcal H}}
\def\mul{{\rm mul\,}}
\def\Ext{{\rm Ext\,}}
\DeclareMathOperator{\imm}{Im}
 \DeclareMathOperator{\Span}{span}
\date{}
\begin{document}
\maketitle

\begin{abstract}
 We study   two- and three-dimensional matrix
Schr\"odinger operators with  $m\in \mathbb N$  point
 interactions. Using the technique   of  boundary  triplets and the corresponding Weyl
functions, we  complete  and  generalize  the  results obtained by
the other  authors  in this field.

 For  instance,  we  parametrize all
self-adjoint extensions of the initial minimal symmetric
Schr\"odinger operator by abstract boundary conditions and
characterize their spectra. Particularly,   we find a sufficient
condition  in terms of distances  and  intensities  for  the
self-adjoint extension $H_{\alpha,X}^{(3)}$ to have $m'$ negative
eigenvalues, i.e., $\kappa_-(H_{\alpha,X}^{(3)})=m'\le m$. We also
give an explicit description of self-adjoint nonnegative
extensions.
\end{abstract}\qquad\\
\textbf{Mathematics  Subject  Classification (2000)}. Primary
47A10, 47B25; Secondary  47A40.\\
\textbf{Key  words}. Schr\"odinger operator, point interactions,
self-adjoint extensions, nonnegative extensions, scattering
matrix.
\section{Introduction}

 Multi-dimensional  Schr\"odinger operators with point  interactions have been intensively studied in the three last
decades (see
\cite{Ada07,AGHH88,AK1,ArlTse05,Ash10,BF,bmn,hk,Koc82,LyaMaj, Pos08}).
 Starting  from  fundamental paper \cite{BF} by  Berezin and Faddeev, operators associated  in $L^2(\mathbb{R}^3)$ with the differential expression
 \begin{equation}\label{eq0}
 -\Delta+\sum\limits_{j=1}^m\alpha_j\delta(\cdot - x_j),\quad \alpha_j\in \mathbb{R},\,\,\,m \in\mathbb{N}
 \end{equation}   have  been   treated in  the
 framework  of the  extension  theory. Namely,  the  authors proposed, in the case  of  one  point interaction,  to  consider  all  self-adjoint   extensions  of the following minimal
  Schr\"odinger operator
  \begin{equation}\label{min}
H=-\Delta\upharpoonright\dom(H),\quad \dom(H) :=\bigl\{f \in
W^2_2(\bR^3): f(x_j)=0,\quad j\in \{1,..,m\} \bigr\}
\end{equation}
   as  a realizations of   expression \eqref{eq0}.

 It is well known that    $H$ is  closed nonnegative symmetric
operator with
 deficiency  indices $n_{\pm}(H)=m$ (cf. \cite{AGHH88}). In  \cite{AGHH88}, the  authors proposed to associate   with  Hamiltonian \eqref{eq0} a certain    $m$-parametric  family  $H_{\alpha,X}^{(3)}$ of self-adjoint extensions of the operator $H$. They   parametrized the  extensions  $H_{\alpha,X}^{(3)}$ in terms
 of the  resolvents. The latter  enabled   them to describe  the spectrum
  of  the   $H_{\alpha,X}^{(3)}$.


  In  the recent  publications \cite{bmn,hk},  boundary  triplets and the corresponding Weyl  functions concept
   (see   \cite{DM91, GG} and also  Section \ref{prelim}) was involved
  to investigate  multi-dimensional Schr\"odinger operators with point
  interactions.   In \cite{Ash10,bmn,hk}, two- and three-dimensional Schr\"odinger operators with  one point  interaction
   were  studied.   

   In  the present paper, we  apply  boundary  triplets and  the corresponding  Weyl
   functions approach   to  study  the  matrix   multi-dimensional  Schr\"odinger
   operators with point  interactions. Namely, in $L^2(\bR^d,\bC^n)$ ($d\in\{2,3\}$), we consider
    the  following    matrix  Schr\"{o}dinger  differential expression
 with  singular  potential localized on  the set  $X:=\{x_j\}_{j=1}^m\subset \bR^d$
\begin{equation}\label{eq1}
 -\Delta\otimes I_n+\sum\limits_{j=1}^m\Lambda_j\delta(\cdot - x_j),\quad \Lambda_j\in \mathbb{R}^{n\times n},\,j\in\{1,..,m\}.
 \end{equation}
The minimal  symmetric  operator associated   with  this
expression
 in $L^2(\bR^d,\bC^n)$ is defined  by
\begin{equation}\label{eq2}
H := -\Delta\otimes I_n, \quad \dom(H) :=\bigl\{f \in
W^2_2(\bR^d,\bC^n): f(x_j)=0,\quad x_j\in X \bigr\}.
\end{equation}

The matrix  three-dimensional  Schr\"odinger
   operator  with  one  point  interaction  was  studied in
   \cite{bmn}. 
      We generalize  the results of  \cite{bmn}   to the case of $m$ point
      interactions  and $d=2,3$.
    Namely,  we construct a  boundary triplet $\Pi$  for $H^*$. Moreover, we  compute
    the corresponding  Weyl  function  and
 the  $\gamma$-field for $\Pi$, as well as the  scattering  matrix for  a pair $\{H_0,H_\Theta\}$.
 It is  worth  to mention   that Weyl  function   coincides with matrix-valued  function appearing in
 the formulas  of the resolvents  of $H_{\alpha,X}^{(d)},\,\, d=2,3,\,$ in \cite[chapters II.1, II.4]{AGHH88}.

  In  addition,  we  describe proper,  self-adjoint, and  nonnegative  self-adjoint  extensions of
  the initial  minimal  symmetric  operator  $H$ and  characterize  their
  spectra.     In  particular,  we   show  that the  family $H_{\alpha,X}^{(d)}$  might  be parametrized by means  of diagonal matrices (see  Remark  4.8).  In the case $n=1$,  we   establish  numerous
  links  between our   results and  the results obtained  in the
  previous   publications mentioned above.

 In  Theorem \ref{crit1}  we  establish a  connection
between the   result on
 uniqueness of  nonnegative  self-adjoint  extension of  an arbitrary   nonnegative symmetric operator $A$
   in \cite[Priposition 10]{DM91}  and the  recent   result of V. Adamyan \cite[Theorem
   2.4]{Ada07}. Particularly, we   reproved the  result  on  the uniqueness of  nonnegative  self-adjoint  extension of
   the   minimal  symmetric  operator  $H$ in  the case
   $n=1$ and $d=2$.

 Let   us  briefly  review the structure  of  the paper.
 Section 2  is  introductory. It contains  definitions  and facts  necessary   for  further
 exposition. In Section 3,   we  establish the uniqueness criterion  mentioned above.
 In   Sections 4 and
 5,   we   investigate    the  matrix Schr\"odinger
  operators with  point  interactions   in the  cases   $d=3$ and  $d=2$,
  respectively.
    Namely, in Subsection 4.1(resp., 5.1),  we  define  boundary triplet  for  the $H^*$ and also compute the  corresponding
    Weyl  function and  the $\gamma$-field. The description  of the
    extensions  of  $H$ is  provided   in   Subsection 4.2 (5.2).
    Finally, Subsection 4.3 (5.3) is devoted  to the  spectral
    analysis of the   self-adjoint  extensions of $H$.
\

\

\textbf{Notation.}  Let   $\gotH$ and $\kH$ stand  for separable
Hilbert spaces; $[{\gotH}, {\kH}]$ stands for the space of bounded
linear operators from ${\gotH}$ to ${\kH}$, $[\kH]:=[\kH,\kH]$;
the set of closed operators in $\kH$ is denoted by $\kC(\kH)$. Let
$A$ be a linear operator in a Hilbert space $\mathfrak{H}$. In
what follows,  $R_z(A)$  denotes  the  resolvent $(A-z)^{-1}$ of
the operator
  $A$; $\dom (A)$, $\ker (A)$, $\ran (A)$
 are the domain,  the kernel, and the range of $A$, respectively; $\sigma(A)$ and $\rho (A)$
denote the spectrum and the resolvent set of $A$; $\mathcal{N}_z$
stands for the defect subspace of $A$  corresponding to eigenvalue $z$.
 Denote by $C_0^\infty(\mathbb{R}^d\setminus X)$ the space of infinitely differentiable functions with compact support.


\section{Prelimimaries}\label{prelim}

\subsection{Boundary triplets and  Weyl  functions}
In this subsection, we recall basic notions and facts of the
theory of boundary triplets (we refer the reader  to \cite{DM91, GG} for a
detailed exposition).

\subsubsection{ Linear relations, boundary triplets and  proper
extensions}
 {\bf 1.} The set $\widetilde\cC(\cH)$ of closed linear
relations in $\cH$ is the set of closed linear subspaces of
 $\cH\oplus\cH$. 
  Recall that $\dom(\Theta) =\bigl\{
f:\bigl(\begin{smallmatrix} f \\
f'\end{smallmatrix}\bigr)\in\Theta\bigr\} $, $\ran(\Theta)
=\bigl\{
f^\prime:\bigl(\begin{smallmatrix} f \\
f'\end{smallmatrix}\bigr)\in\Theta\bigr\} $, and $\mul(\Theta)
=\bigl\{
f^\prime:\bigl(\begin{smallmatrix} 0 \\
f'\end{smallmatrix}\bigr)\in\Theta\bigr\} $ are the domain, the
range, and the multivalued part of $\Theta$. A closed linear
operator in $\cH$ is identified with its graph, so that the set
$\cC(\cH)$  of closed linear operators in $\cH$ is viewed as a
subset of $\widetilde\cC(\cH)$. In particular, a linear relation
$\Theta$ is an operator if and only if the multivalued part
$\mul(\Theta)$ is trivial.  We recall that the adjoint relation
$\Theta^*\in\widetilde\cC(\cH)$ of a linear relation $\Theta$ in
$\cH$ is defined by
\begin{equation*}
\Theta^*= \left\{
\begin{pmatrix} k\\k^\prime
\end{pmatrix}: (h^\prime,k)=(h,k^\prime)\,\,\text{for all}\,
\begin{pmatrix} h\\h^\prime\end{pmatrix}
\in\Theta\right\}.
\end{equation*}
 The linear relation $\Theta$ is said to be {\it symmetric}
if $\Theta\subset\Theta^*$ and self-adjoint if $\Theta=\Theta^*$.
The linear relation $\Theta$ is said to be {\it nonnegative} if
$(k',k)\geq  0$ for all $\binom{k}{k'}\in\Theta$. For the
symmetric relation $\Theta\subseteq\Theta^*$ in $\cH$ the
multivalued part $\mul(\Theta)$ is the orthogonal complement of
$\dom(\Theta)$ in $\cH$. Setting $\cH_{\rm
op}:=\overline{\dom(\Theta)}$ and $\cH_\infty=\mul(\Theta)$, one
verifies that $\Theta$ can be written as the direct orthogonal sum
of a self-adjoint operator $\Theta_{\rm op}$ in the subspace
$\cH_{\rm op}$ and a ``pure'' relation
$\Theta_\infty=\bigl\{\bigl(\begin{smallmatrix} 0 \\ f'
\end{smallmatrix}\bigr):f'\in\mul(\Theta)\bigr\}$ in the
subspace $\cH_\infty$.

 Any closed
linear relation admits  the  following representation (see, for
instance, \cite{mm})
\begin{equation}\label{rel}
\Theta=\{ (h,h')^\top\in \kH\oplus\kH:\, Ch-Dh'=0 \}, \qquad
C,D\in[\cH].
 \end{equation}
 Note that representation \eqref{rel} is not unique.

{\bf 2.} Let $A$ be a closed densely defined symmetric operator in
the Hilbert space $\gotH$ with equal deficiency indices
$n_\pm(A)=\dim\ker(A^*\pm i)\leq\infty$.
\begin{definition}[\cite{GG}]\label{bound}%
A triplet $\Pi=\{\kH,\gG_0,\gG_1\}$ is called a {\rm boundary
triplet} for the adjoint operator $A^*$ of $A$ if $\kH$ is an
auxiliary Hilbert space and
$\Gamma_0,\Gamma_1:\  \dom(A^*)\rightarrow\kH$ are linear mappings such that\\
 $(i)$ the  second  Green identity,
\begin{equation*}\label{GI}
(A^*f,g)_\gotH - (f,A^*g)_\gotH = (\gG_1f,\gG_0g)_\kH -
(\gG_0f,\gG_1g)_\kH,
\end{equation*}
holds for all $f,g\in\dom(A^*)$, and\\
$(ii)$ the mapping $\gG:=(\Gamma_0,\Gamma_1)^\top: \dom(A^*)
\rightarrow \kH \oplus\kH$ is surjective.
\end{definition}
Since $n_+(A)=n_-(A)$, a boundary triplet
$\Pi=\{\kH,\gG_0,\gG_1\}$ for $A^*$ exists and is not unique
\cite{GG}. Moreover, $\dim\kH=n_\pm(A) $ and
$\dom(A)=\dom(A^*)\upharpoonright\ker(\Gamma_0)\cap\ker(\Gamma_1)$.

 A closed extension $\widetilde{A}$ of $A$ is called
\emph{proper} if $A\subseteq\widetilde{A}\subseteq A^*$.
 Two proper extensions $\widetilde{A}_1$ and $\widetilde{A}_2$ of $A$ are called \emph{disjoint}
 if $\dom(\widetilde{A}_1)\cap\dom(\widetilde{A}_2)=\dom(A)$ and \emph{transversal} if, in
 addition,
 $\dom(\widetilde{A}_1)\dotplus\dom(\widetilde{A}_2)=\dom(A^*)$ .
  The set of all proper extensions of $A$,
 $\Ext A$, may be described in the following way.
 \begin{proposition}[\cite{DM91,GG}]\label{propo}
 Let $A$ be a densely defined closed symmetric operator in $\gotH$
with equal deficiency indices and let  $\Pi=\{\kH,\gG_0,\gG_1\}$
be  a boundary triplet for  $A^*.$  Then the mapping
\begin{equation}\label{bij}
\Ext_A\ni A_\Theta\rightarrow \Theta:=\Gamma(\dom(\widetilde{A}))=\{(\Gamma_0f,\Gamma_1f)^\top:\,\,f\in \dom(\widetilde{A})\}
\end{equation}
establishes  a bijective correspondence between the set
$\widetilde\kC(\kH)$ and the set of closed proper extensions
$A_\Theta\subseteq A^*$ of $A$. Furthermore,
\begin{equation*}
(A_\Theta)^*=  A_{\Theta^*}
\end{equation*}
holds for any $\Theta\in\widetilde\kC(\kH)$. The extension
$A_\Theta$ in \eqref{bij} is symmetric (self-adjoint) if and only
if $\Theta$ is symmetric (self-adjoint).
\end{proposition}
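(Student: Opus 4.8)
The plan is to deduce the entire proposition from one structural fact: the boundary map $\gG=(\gG_0,\gG_1)^\top$ maps $\dom(A^*)$ \emph{onto} $\kH\oplus\kH$ with kernel exactly $\dom(A)$. Surjectivity is axiom $(ii)$, and the identity $\ker\gG=\dom(A)$ follows from the second Green identity together with surjectivity: if $f\in\dom(A)$, then $(A^*f,g)=(f,A^*g)$ for every $g\in\dom(A^*)$, so the right-hand side of Green's identity vanishes for all $g$ and surjectivity forces $\gG f=0$; conversely $\gG f=0$ makes that right-hand side vanish, whence $(A^*f,g)=(f,A^*g)$ for all $g\in\dom(A^*)$, i.e. $f\in\dom(A^{**})=\dom(A)$. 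With $\gG$ surjective and $\ker\gG=\dom(A)$, the correspondence theorem for the linear surjection $\gG$ produces a bijection between linear manifolds $\mathcal D$ with $\dom(A)\subseteq\mathcal D\subseteq\dom(A^*)$ and arbitrary linear subspaces $\Theta\subseteq\kH\oplus\kH$, implemented by $\Theta=\gG(\mathcal D)$ and $\mathcal D=\gG^{-1}(\Theta)$. Since each proper extension is uniquely $A^*\!\upharpoonright\!\mathcal D$, this already establishes the correspondence $A_\Theta\leftrightarrow\Theta$ at the purely algebraic level.

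The second step is to match \emph{closedness}. Here I would first show that $\gG$ is bounded from $\dom(A^*)$, equipped with the graph norm of the closed operator $A^*$, into $\kH\oplus\kH$. This follows from the closed graph theorem: if $f_n\to f$ in the graph norm and $\gG f_n\to(a,b)$, then inserting $f_n$ into the second Green identity and passing to the limit yields $(\gG_1 f-b,\gG_0 g)-(\gG_0 f-a,\gG_1 g)=0$ for every $g\in\dom(A^*)$, and surjectivity of $\gG$ forces $(a,b)=\gG f$. Being a bounded linear surjection of Hilbert spaces, $\gG$ is open, so $\mathcal D=\gG^{-1}(\Theta)$ is closed in the graph norm precisely when $\Theta$ is closed in $\kH\oplus\kH$. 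As $\mathcal D$ being closed in the graph norm is exactly the statement that $A^*\!\upharpoonright\!\mathcal D$ is a closed operator, the bijection of the first step restricts to one between $\widetilde\kC(\kH)$ and the set of closed proper extensions of $A$.

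Third, I would establish $(A_\Theta)^*=A_{\Theta^*}$ by a direct Green-identity computation. Since $A\subseteq A_\Theta$ we have $(A_\Theta)^*\subseteq A^*$, so $g\in\dom((A_\Theta)^*)$ if and only if $g\in\dom(A^*)$ and $(A^*f,g)=(f,A^*g)$ for all $f\in\dom(A_\Theta)$. By the second Green identity this is equivalent to $(\gG_1 f,\gG_0 g)=(\gG_0 f,\gG_1 g)$ for all such $f$; writing $(h,h')^\top=(\gG_0 f,\gG_1 f)^\top$, which ranges over all of $\Theta$, the condition becomes $(h',\gG_0 g)=(h,\gG_1 g)$ for all $(h,h')^\top\in\Theta$. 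Comparing with the defining relation for $\Theta^*$ recorded before Definition \ref{bound}, this says exactly $(\gG_0 g,\gG_1 g)^\top\in\Theta^*$, i.e. $g\in\dom(A_{\Theta^*})$; hence $(A_\Theta)^*=A_{\Theta^*}$.

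Finally, the symmetry and self-adjointness criteria are immediate corollaries. The bijection is inclusion-preserving, since $\dom(A_\Theta)$ and $\dom(A_{\Theta^*})$ both contain $\ker\gG$ and $\gG$ is surjective, so $A_\Theta\subseteq A_{\Theta^*}$ iff $\Theta\subseteq\Theta^*$ and $A_\Theta=A_{\Theta^*}$ iff $\Theta=\Theta^*$; combined with $(A_\Theta)^*=A_{\Theta^*}$ this gives that $A_\Theta$ is symmetric (resp. self-adjoint) exactly when $\Theta$ is. I expect the main obstacle to be the closedness matching of the second step rather than the algebra: the adjoint computation is a clean manipulation once Green's identity is in hand, whereas the topological equivalence $A_\Theta\ \text{closed}\ \Llr\ \Theta\ \text{closed}$ genuinely requires the continuity of $\gG$ in the graph norm and the open mapping theorem, and is the one place where completeness is used.
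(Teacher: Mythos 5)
The paper offers no proof of Proposition \ref{propo} at all: it is imported verbatim from \cite{DM91,GG}, so your argument can only be measured against the standard proofs in those references, which it essentially reproduces, and it is correct. The kernel identification $\ker\gG=\dom(A)$ via Green's identity plus surjectivity, the closed-graph argument for boundedness of $\gG$ in the graph norm of $A^*$, the adjoint computation matching the paper's displayed definition of $\Theta^*$, and the monotonicity argument for the symmetry/self-adjointness criteria are all sound. The one spot where your wording is terser than the logic requires is the converse direction of Step 2: openness of $\gG$ does not by itself make images of closed sets closed, so ``$\mathcal D$ closed $\Rightarrow\Theta$ closed'' needs the observation that $\mathcal D=\gG^{-1}(\Theta)$ is saturated over $\ker\gG$, whence $\gG\bigl(\dom(A^*)\setminus\mathcal D\bigr)=(\kH\oplus\kH)\setminus\Theta$ is open; equivalently, factor $\gG$ through the induced topological isomorphism of $\dom(A^*)/\dom(A)$ (graph norm) onto $\kH\oplus\kH$ and note that $\mathcal D/\dom(A)$ is closed in the quotient. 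This is a clarification of a standard point, not a gap, and with it your proof is complete and faithful to the route taken in \cite{DM91,GG}.
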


 Proposition \ref{propo}  and representation \eqref{rel}
 yield  the following corollary.

\begin{corollary}

$(i)$ The extensions $A_0:=A^*\!\upharpoonright\ker(\gG_0)$ and
$A_1:=A^*\!\upharpoonright\ker(\gG_1)$ are self-adjoint.
$(ii)$ Any proper extension $A_\Theta$ of the operator $A$ admits
the representation
\begin{equation}\label{ext repr}
A_\Theta=A_{C,D}=A^*\upharpoonright\dom(A_{C,D}),\quad \dom
(A_{C,D})=\dom
(A^*)\!\upharpoonright\ker(D\Gamma_1-C\Gamma_0),\quad C,D\in[H].
\end{equation}
$(iii)$ If,  in  addition,   the closed extensions $A_\Theta$ and
$A_0$ are disjoint, then   \eqref{ext repr} takes the form
\begin{equation*}\label{bijop}
A_\Theta=A_B=A^*\!\upharpoonright\dom(A_B) ,\quad \dom (A_B)=\dom
(A^*)\!\upharpoonright\ker\bigl(\Gamma_1-B\Gamma_0\bigr),\quad B\in\mathcal{C}(\mathcal{H}).
\end{equation*}
\end{corollary}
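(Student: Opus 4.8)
The plan is to derive all three parts from Proposition \ref{propo} together with the representation \eqref{rel}, the surjectivity of the boundary map $\gG=(\gG_0,\gG_1)^\top$, and the identity $\dom(A)=\ker(\gG_0)\cap\ker(\gG_1)$ recorded after Definition \ref{bound}.

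For $(i)$, I would identify the linear relation attached to $A_0$ and $A_1$ under the bijection \eqref{bij}. Since $\dom(A_0)=\ker(\gG_0)$, surjectivity of $\gG$ shows that restricting to $\ker(\gG_0)$ still lets $\gG_1$ range over all of $\kH$; hence $\gG(\dom(A_0))=\{0\}\oplus\kH$. A direct computation from the definition of the adjoint relation gives $(\{0\}\oplus\kH)^*=\{0\}\oplus\kH$, so this relation is self-adjoint, and the final assertion of Proposition \ref{propo} forces $A_0$ to be self-adjoint. The argument for $A_1$ is entirely symmetric, the associated relation now being $\kH\oplus\{0\}$, which is again its own adjoint.

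For $(ii)$, I would simply splice the two ingredients together. By Proposition \ref{propo} one has $A_\Theta=A^*\upharpoonright\{f:(\gG_0 f,\gG_1 f)^\top\in\Theta\}$, and by \eqref{rel} every closed relation admits the form $\Theta=\{(h,h')^\top:Ch-Dh'=0\}$ with $C,D\in[\kH]$. Substituting $h=\gG_0 f$ and $h'=\gG_1 f$ turns the membership condition into $C\gG_0 f-D\gG_1 f=0$, that is $f\in\ker(D\gG_1-C\gG_0)$, which is exactly the representation \eqref{ext repr}.

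For $(iii)$, the decisive point is to translate the disjointness of $A_\Theta$ and $A_0$ into a structural property of $\Theta$. I would show that $\dom(A_\Theta)\cap\dom(A_0)=\dom(A)$ holds precisely when $\mul(\Theta)=\{0\}$, i.e.\ when $\Theta$ is the graph of a closed operator $B\in\kC(\kH)$. Indeed, $f$ lies in $\dom(A_\Theta)\cap\dom(A_0)$ exactly when $\gG_0 f=0$ and $(0,\gG_1 f)^\top\in\Theta$, namely $\gG_1 f\in\mul(\Theta)$; comparing this with $\dom(A)=\ker(\gG_0)\cap\ker(\gG_1)$ and using surjectivity of $\gG$ to realize each element of $\mul(\Theta)$ as some $\gG_1 f$ with $\gG_0 f=0$ yields the equivalence. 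Once $\Theta$ is the graph of $B$, the condition $(\gG_0 f,\gG_1 f)^\top\in\Theta$ reads $\gG_1 f=B\gG_0 f$, i.e.\ $f\in\ker(\gG_1-B\gG_0)$. I expect this last equivalence to be the only real obstacle: one must track the multivalued part carefully and invoke surjectivity in both directions, whereas parts $(i)$ and $(ii)$ reduce to direct substitutions into results already in hand.
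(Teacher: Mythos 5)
Your proposal is correct and follows exactly the route the paper intends: the paper states the corollary as a direct consequence of Proposition \ref{propo} and representation \eqref{rel}, and your argument simply fleshes out that derivation, identifying $\gG(\dom(A_0))=\{0\}\oplus\kH$ and $\gG(\dom(A_1))=\kH\oplus\{0\}$ for $(i)$, substituting \eqref{rel} into the bijection \eqref{bij} for $(ii)$, and correctly characterizing disjointness from $A_0$ as triviality of $\mul(\Theta)$ (using surjectivity of $\gG$ for the converse direction) for $(iii)$. No gaps; your handling of the multivalued part in $(iii)$ is precisely the point the paper leaves implicit.
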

\begin{remark}
 In  the  case  $\dim(\cH)<\infty$,   it  follows   from the  result of  Rofe-Beketov  \cite{Rof69}  that
 the  extension $A_\Theta$  defined  by \eqref{ext repr}  is self-adjoint  if and only if  the following
 conditions  hold
\begin{equation}\label{s-acond}
CD^*=DC^*,\qquad 0\in\rho(CC^*+DD^*).
\end{equation}
\end{remark}
\subsubsection{Weyl functions, $\gamma$-fields, and Krein  type formula  for  resolvents}
\begin{definition}[{\cite{DM91}}]\label{Weylfunc}
Let $\Pi=\{\kH,\gG_0,\gG_1\}$ be a boundary triplet  for $A^*.$
The operator valued functions $\gamma(\cdot) :\
\rho(A_0)\rightarrow  [\kH,\gotH]$ and  $M(\cdot) :\
\rho(A_0)\rightarrow  [\kH]$ defined by
\begin{equation}\label{2.3A}
\gamma(z):=\bigl(\Gamma_0\!\upharpoonright\kN_z\bigr)^{-1}
\qquad\text{and}\qquad M(z):=\Gamma_1\gamma(z), \quad
z\in\rho(A_0),
      \end{equation}
are called the {\em $\gamma$-field} and the {\em Weyl function},
respectively, corresponding to the boundary triplet $\Pi.$
\end{definition}
%
The $\gamma$-field $\gamma(\cdot)$
and the Weyl function $M(\cdot)$ in \eqref{2.3A} are well defined.
Moreover, both $\gamma(\cdot)$ and $M(\cdot)$ are
holomorphic on $\rho(A_0)$.
%

 The spectra of the closed (not
necessarily self-adjoint) extensions of $A$ can be described with
the help of the function $M(\cdot)$.
\begin{proposition}\label{prop_II.1.4_spectrum}
Let $\Theta\in \widetilde{\mathcal{C}}(\mathcal{H})$, $A_\Theta\in \Ext_A$, and $z\in \rho(A_0)$.  Then
\[
z\in\sigma_i(A_\Theta) \quad \Leftrightarrow\quad 0\in \sigma_i(\Theta-M(z)),\qquad i\in\{ p,\ c,\ r\}.
\]
\end{proposition}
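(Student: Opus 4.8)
The plan is to reduce every statement about the operator $A_\Theta-z$ to the corresponding statement about the relation $\Theta-M(z)$ by exploiting that, for $z\in\rho(A_0)$, one has the direct decomposition $\dom(A^*)=\dom(A_0)\dotplus\mathcal{N}_z$. Concretely, given $g\in\gotH$, every $f\in\dom(A^*)$ solving $(A^*-z)f=g$ is of the form $f=(A_0-z)^{-1}g+\gamma(z)h$ for a unique $h\in\mathcal{H}$; applying $\Gamma_0$ and $\Gamma_1$ and using $\dom(A_0)=\ker(\Gamma_0)$ together with the identities $\Gamma_0\gamma(z)=I$ and $\Gamma_1\gamma(z)=M(z)$ from \eqref{2.3A}, I get $\Gamma_0 f=h$ and $\Gamma_1 f=\Gamma_1(A_0-z)^{-1}g+M(z)h$. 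Hence, by \eqref{bij}, $f\in\dom(A_\Theta)$ precisely when $\bigl(h,\ \Gamma_1(A_0-z)^{-1}g+M(z)h\bigr)^\top\in\Theta$, i.e. $\bigl(h,\ \Gamma_1(A_0-z)^{-1}g\bigr)^\top\in\Theta-M(z)$. This single computation is the engine for all three spectral types.

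For the point spectrum I take $g=0$: then $f\in\ker(A_\Theta-z)$ iff $f=\gamma(z)h$ with $(h,0)^\top\in\Theta-M(z)$, that is $h\in\ker(\Theta-M(z))$. Since $\gamma(z)$ is injective (it is the inverse of $\Gamma_0\!\upharpoonright\mathcal{N}_z$), it restricts to a bijection $\ker(\Theta-M(z))\to\ker(A_\Theta-z)$. Consequently $z\in\sigma_p(A_\Theta)\Leftrightarrow 0\in\sigma_p(\Theta-M(z))$, and, equivalently, $A_\Theta-z$ is injective exactly when $\Theta-M(z)$ is.

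For the continuous and residual parts I analyze the ranges. Set $\Phi(z):=\Gamma_1(A_0-z)^{-1}\in[\gotH,\mathcal{H}]$; it is bounded, and it is surjective because $(A_0-z)^{-1}$ maps $\gotH$ onto $\dom(A_0)=\ker(\Gamma_0)$ while the surjectivity of $\Gamma=(\Gamma_0,\Gamma_1)^\top$ (Definition \ref{bound}(ii)) forces $\Gamma_1$ to map $\ker(\Gamma_0)$ onto $\mathcal{H}$. The reduction of the first paragraph then reads $g\in\ran(A_\Theta-z)\Leftrightarrow \Phi(z)g\in\ran(\Theta-M(z))$, i.e. $\ran(A_\Theta-z)=\Phi(z)^{-1}\bigl(\ran(\Theta-M(z))\bigr)$. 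Because $\Phi(z)$ is a bounded surjection between Hilbert spaces it is open and induces a topological isomorphism $\gotH/\ker\Phi(z)\cong\mathcal{H}$; taking preimages therefore preserves, in both directions, the properties of being dense, being closed, and being the whole space. Thus $\ran(\Theta-M(z))$ is dense / closed / all of $\mathcal{H}$ iff $\ran(A_\Theta-z)$ is dense / closed / all of $\gotH$. Combining this with the kernel correspondence, the residual case (injective, range not dense) and the continuous case (injective, range dense but proper) match on the two sides, which yields the equivalences for $i\in\{c,r\}$.

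The main points requiring care, rather than a single hard obstacle, are two. First, $\Theta-M(z)$ must be handled as a closed linear relation, not an operator, so I must use the definitions of $\ker$, $\ran$, and $\sigma_{p},\sigma_{c},\sigma_{r}$ for relations under the convention in which these three sets form a disjoint partition (injectivity being built into the $c$ and $r$ cases); the classification for the closed relation $\Theta-M(z)$ must mirror that for the closed operator $A_\Theta-z$. Second, everything hinges on the surjectivity and openness of $\Phi(z)=\Gamma_1(A_0-z)^{-1}$, which is exactly what transports the topological structure of the ranges \emph{faithfully in both directions}; this is where Definition \ref{bound}(ii) is essential. I would also record that $\Phi(z)=\gamma(\bar z)^*$, a standard identity for boundary triplets, although only its surjectivity is actually used in the argument.
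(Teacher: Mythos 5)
Your proof is correct; the paper itself states this proposition without proof, recalling it as a known fact from the boundary triplet literature (\cite{DM91}), and your argument --- decomposing $\dom(A^*)=\dom(A_0)\dotplus\mathcal{N}_z$ for $z\in\rho(A_0)$, computing $\Gamma_0f=h$ and $\Gamma_1f=\Gamma_1(A_0-z)^{-1}g+M(z)h$, transporting kernels via the bijection $\gamma(z)$ and ranges via the bounded open surjection $\Gamma_1(A_0-z)^{-1}=\gamma(\bar z)^*$ --- is precisely the standard proof of that cited result. Your treatment of the two delicate points (handling $\Theta-M(z)$ as a closed linear relation rather than an operator, and the two-way preservation of dense/closed/full range under preimages by an open surjection, which rests on the surjectivity of $\Gamma$ in Definition \ref{bound}$(ii)$) is sound, so there is nothing to correct.
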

  Moreover, for
$z\in\rho(A_0)\cap\rho(A_\Theta)$ the  resolvent formula
\begin{equation}\label{2.8}
R_z(A_\Theta) = R_z(A_0) + \gga(z)\bigl(\Theta -
M(z)\bigr)^{-1}\gga(\overline{z})^*,\qquad
z\in\rho(A_\Theta)\cap\rho(A_0)
\end{equation}
holds (see \cite{DM91}). Formula \eqref{2.8} is a generalization of
the  well-known Krein formula for canonical resolvents. We
emphasize that it is valid for any closed extension
$A_\Theta\subseteq A^*$ of $A$ with  nonempty resolvent set.

According to the  representation \eqref{ext repr}, it reads (see
\cite{mm})
\begin{equation}\label{krein}
 R_z(A_\Theta) = R_z(A_0) + \gga(z)\bigl(C -DM(z)\bigr)^{-1}D\gga(\overline{z})^*,\quad
 z\in\rho(A_{C,D})\cap\rho(A_0).
 \end{equation}

Let  now $A$ be a  closed densely  defined   nonnegative symmetric
operator in the  Hilbert  space $\mathfrak H$.  Among  its
nonnegative self-adjoint extensions two extremal extension $A_F$
and $A_K$ are laid special emphasis on. They  are called
Friedrichs and Krein extension, respectively, (see \cite{Kre47}).
Operator $\widetilde{A}$ is nonnegative self-adjoint extension of
$A$ if and only  if $A_K\leq\widetilde{A}\leq A_F$ in  the sense
of the corresponding quadratic  forms.

\begin{proposition}[\cite{DM91, DM95}]\label{prkf}
Let $A$ be  a densely defined nonnegative symmetric operator  with  finite   deficiency indices in
$\gotH$, and  let   $\Pi=\{\kH,\gG_0,\gG_1\}$ be a boundary
 triplet for  $A^*$ such that  $A_0\geq 0$. Let  also  $M(\cdot)$ be the corresponding
Weyl function. Then the  following assertions hold.
\newline
$(i)$   There exists a strong   resolvent  limit
\[
M(0):=s-R-\lim\limits_{x\uparrow 0}M(x),\qquad
(M(-\infty):=s-R-\lim\limits_{x\downarrow -\infty}M(x)).
\]
\newline
 $(ii)$ $M(0)$ ($M(-\infty)$) is  a
 self-adjoint linear relation in $\cH$ associated with  the
 semibounded  below (above) quadratic  form  $\mathrm{t}_0[f]=\lim\limits_{x\uparrow 0}(M(x)f,f)\geq \beta$\quad
  (resp. $\mathrm{t}_{-\infty}[f]=\lim\limits_{x\downarrow
 -\infty}(M(x)f,f)\leq \alpha$)  with  the  domain
 \begin{gather*}\label{Weylform}
 \dom(\mathrm{t}_0)=\{f\in \cH:\,\lim\limits_{x\uparrow 0}|(M(x)f,f)|<\infty\}=\dom((M(0)_{op}-\beta)^{1/2})\\
\dom(\mathrm{t}_{-\infty})=\{f\in \cH:\,\lim\limits_{x\downarrow
 -\infty}|(M(x)f,f)|<\infty\}=\dom((\alpha-M(-\infty)_{op}))^{1/2}).
 \end{gather*}
 Moreover,
 \begin{gather*}
\dom(A_K)=\{f\in\dom(A^*):\,\,(\Gamma_0f,\Gamma_1f)^\top\in M(0)\}\\
(\text{resp. }
\dom(A_F)=\{f\in\dom(A^*):\,\,(\Gamma_0f,\Gamma_1f)^\top\in
M(-\infty)\}).
 \end{gather*}
 $(iii)$ Extensions
$A_0$  and $A_K$  are  disjoint  ($A_0$ and $A_F$ are disjoint) if
and only if  $M(0)\in\cC(\cH)$ ($M(-\infty)\in\cC(\cH)$  resp.)
Moreover,
\begin{equation*}
\dom(A_K)=\dom(A^*)\upharpoonright\ker(\Gamma_1-M(0)\Gamma_0)\qquad
(\dom(A_F)=\dom(A^*)\upharpoonright\ker(\Gamma_1-M(-\infty)\Gamma_0)).
\end{equation*}
\newline
$(iv)$  $A_F=A_0$ ($A_K=A_0$) if  and only  if
 \begin{equation}\label{cond un}
 \lim_{x\downarrow-\infty}(M(x)f,f)=-\infty\qquad(\lim_{x\uparrow0}(M(x)f,f)=+\infty),\quad
 f\in\kH\setminus\{0\}.
 \end{equation}
\newline
$(v)$ If $A_0=A_F$  and    $\dom(\mathrm{t}_{\Theta_{op}})\subset \dom(\mathrm{t}_0)$,  then the   number   of negative eigenvalues   of  self-adjoint extension $A_\Theta$
of $A$ equals   the  number  of   negative  eigenvalues  of  the  quadratic form  $\mathrm{t}_{\Theta_{op}}-\mathrm{t}_0$,  i.e.,
\begin{equation*}
\kappa_-(A_\Theta)=\kappa_-(\mathrm{t}_{\Theta_{op}}-\mathrm{t}_0).
\end{equation*}
Moreover,  if
$M(0)\in[\cH]$,   then  $\kappa_{-}(A_\Theta)=\kappa_{-}(\Theta-M(0))$.

$(vi)$   In  particular,  the  $A_\Theta$   is    nonnegative self-adjoint  if  and  only  if
\begin{equation}\label{nonselfcond}
\dom(\mathrm{t}_{\Theta_{op}})\subset \dom(\mathrm{t}_0)\qquad
\text{and}\qquad \mathrm{t}_{\Theta_{op}}-\mathrm{t}_0\geq 0.
\end{equation}

  If  $M(0)\in[\cH]$,    the inequality  in \eqref{nonselfcond}  takes the
form     $\Theta-M(0)\geq 0$.
\end{proposition}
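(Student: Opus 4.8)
The plan is to exploit the finite-dimensionality of $\cH$ (since $\dim\cH=n_\pm(A)<\infty$, so $M(\cdot)$ is Hermitian-matrix–valued) together with the monotonicity of the Weyl function on the negative half-line. Because $A_0\geq 0$, the ray $(-\infty,0)$ lies in $\rho(A_0)$, so $M(x)$ is a well-defined self-adjoint function there, and differentiating \eqref{2.3A} gives the standard identity $M'(x)=\gamma(x)^*\gamma(x)\geq 0$. Hence $x\mapsto M(x)$ is non-decreasing on $(-\infty,0)$ with $M(-\infty)\leq M(x)\leq M(0)$. For $(i)$ I would invoke the monotone convergence theorem for self-adjoint operators: a bounded, non-decreasing family of self-adjoint operators converges in the strong resolvent sense, the limit being a self-adjoint linear relation whose multivalued part collects the directions along which $(M(x)f,f)$ diverges. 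Applying this as $x\uparrow 0$ and as $x\downarrow-\infty$ produces $M(0)$ and $M(-\infty)$.

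For $(ii)$ I would identify $\mathrm{t}_0[f]=\lim_{x\uparrow 0}(M(x)f,f)=\sup_{x<0}(M(x)f,f)$ as the monotone limit of the forms of $M(x)$; Kato's theorem on monotone limits of semibounded forms then represents $\mathrm{t}_0$ as the closed form of $(M(0)_{op}-\beta)^{1/2}$ with the stated domain, and symmetrically for $\mathrm{t}_{-\infty}$. The identification $A_K=A_{M(0)}$ and $A_F=A_{M(-\infty)}$ is the heart of this part. Using the Krein resolvent formula \eqref{2.8} at $x<0$, one checks $A_{M(0)}$ is nonnegative (since $M(0)-M(x)\geq 0$); moreover every nonnegative extension satisfies $\Theta\geq M(0)$ in the form sense, so that $R_x(A_{M(0)})\geq R_x(A_\Theta)$ forces $A_{M(0)}\leq A_\Theta$. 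Thus $A_{M(0)}$ is the smallest nonnegative extension, i.e. $A_K$, while $A_{M(-\infty)}$ is the Friedrichs extension $A_F$; the Krein ordering $A_K\leq\widetilde A\leq A_F$ closes the identification, and the domain descriptions read off from the bijection of Proposition \ref{propo}.

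Parts $(iii)$ and $(iv)$ are structural consequences. The extensions $A_0=A^*\!\upharpoonright\ker\gG_0$ and $A_K=A_{M(0)}$ are disjoint precisely when the relation $M(0)$ has trivial multivalued part, i.e. $M(0)\in\cC(\cH)$; in that case the Corollary to Proposition \ref{propo} rewrites $A_{M(0)}$ as $A^*\!\upharpoonright\ker(\gG_1-M(0)\gG_0)$, and likewise for $A_F$. For $(iv)$, $A_F=A_0$ holds iff $M(-\infty)$ is the purely multivalued relation $\{0\}\times\cH$, equivalently $\dom(\mathrm{t}_{-\infty})=\{0\}$, which is exactly the divergence condition \eqref{cond un}; the statement for $A_K$ is the mirror image obtained at the endpoint $0$.

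Finally $(v)$ and $(vi)$, proved under $A_0=A_F$, rest on an eigenvalue-counting argument that I expect to be the main obstacle. By Proposition \ref{prop_II.1.4_spectrum}, a negative eigenvalue of $A_\Theta$ at $x<0$ corresponds to $0\in\sigma_p(\Theta-M(x))$. Since $M(x)$ is non-decreasing, the finitely many eigenvalue branches of $\Theta-M(x)$ are non-increasing in $x$; with $A_0=A_F$ one has $M(-\infty)=\{0\}\times\cH$, so $\Theta-M(x)\to+\infty$ as $x\downarrow-\infty$ and every branch starts positive. Each branch therefore crosses zero at most once, and the number that have turned negative by $x\uparrow 0$ equals the number of negative eigenvalues of the limiting form difference $\mathrm{t}_{\Theta_{op}}-\mathrm{t}_0$, giving $\kappa_-(A_\Theta)=\kappa_-(\mathrm{t}_{\Theta_{op}}-\mathrm{t}_0)$, and $\kappa_-(\Theta-M(0))$ when $M(0)\in[\cH]$. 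The delicate point is to make this crossing count rigorous when $M(0)$ is an unbounded relation, so that some branches escape to $-\infty$; here one must pass to the form $\mathrm{t}_0$ on its proper domain and control the limit by a min–max argument, which is precisely where the monotone form-convergence theory is needed. Specializing the count to $\kappa_-(A_\Theta)=0$ then yields the nonnegativity criterion \eqref{nonselfcond} of $(vi)$.
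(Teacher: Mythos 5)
A preliminary remark: the paper contains no proof of Proposition \ref{prkf} at all --- it is imported verbatim as background from \cite{DM91,DM95} --- so your attempt can only be measured against the standard proof in those references, which your sketch in fact tracks closely: the monotonicity $M'(x)=\gamma(x)^*\gamma(x)\ge 0$ on $(-\infty,0)\subset\rho(A_0)$, monotone (strong resolvent) convergence to a self-adjoint relation whose multivalued part absorbs the divergent directions, the disjointness criterion in $(iii)$ via $\mul(M(0))$, and the reading of \eqref{cond un} as $M(-\infty)=\{0\}\times\cH$, i.e. $\dom(\mathrm{t}_{-\infty})=\{0\}$, are all correct, and your reduction to Hermitian-matrix-valued $M(\cdot)$ is legitimate under the finite-deficiency hypothesis of the statement. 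One small slip: the family $M(x)$ is not ``bounded''; what you need (and in effect use) is the monotone convergence theorem for nondecreasing, not necessarily uniformly bounded, families of semibounded forms --- if the family were bounded the limit would be an operator and the relation case of $(ii)$--$(iii)$ would never occur.

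There is, however, a genuine structural gap in your ordering of the argument. In $(ii)$ you identify $A_{M(0)}=A_K$ by asserting that \emph{every} nonnegative self-adjoint extension satisfies $\Theta\ge M(0)$ in the form sense; but that assertion is exactly the necessity half of $(vi)$, which you derive only later from $(v)$, and moreover only under the additional hypothesis $A_0=A_F$ --- as written, the proof is circular. The non-circular arrangement runs the eigenvalue-branch argument first: for a self-adjoint $A_\Theta$, Proposition \ref{prop_II.1.4_spectrum} locates the negative eigenvalues at the zeros of the branches of $\Theta-M(x)$, which are strictly decreasing since $M'(x)=\gamma(x)^*\gamma(x)>0$; when $A_0=A_F$ all branches tend to $+\infty$ as $x\downarrow-\infty$, so $A_\Theta\ge 0$ forces every branch to stay positive on $(-\infty,0)$, and the limit $x\uparrow 0$ yields $\mathrm{t}_{\Theta_{op}}\ge\mathrm{t}_0$ with the domain inclusion; the converse is immediate from $M(x)<M(0)$. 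This gives $(v)$--$(vi)$ directly, and only \emph{then} does Krein's extremal characterization (quoted in Section 2: $A_K\le\widetilde A\le A_F$) identify $A_{M(0)}$ as the minimal nonnegative extension, i.e. $A_K$, via the anti-tonicity of $\Theta\mapsto(\Theta-M(x))^{-1}$ in \eqref{2.8}. Two further points need attention: the hypothesis $\dom(\mathrm{t}_{\Theta_{op}})\subset\dom(\mathrm{t}_0)$ in $(v)$ is precisely what prevents branches along directions in $\dom(\mathrm{t}_{\Theta_{op}})\setminus\dom(\mathrm{t}_0)$ from escaping to $-\infty$ uncounted, so it must enter as an explicit hypothesis rather than the ``min--max control'' you gesture at; and $(ii)$ is asserted under the weaker assumption $A_0\ge 0$ only, so the branch argument does not apply verbatim there --- one must first reduce to the case $A_0=A_F$ (e.g. by a transformation of the boundary triplet realizing $A_F$ as the kernel of the new $\Gamma_0$), a step absent from your sketch.
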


\subsection{Scattering matrices}

Let $A$ be a densely defined closed symmetric operator in the
separable Hilbert space $\gotH$   with  equal  finite deficiency
 indices and  let  $\Pi =\{\kH,\gG_0,\gG_1\}$ be
a boundary triplet for $A^*$. Assume that $A_\Theta$ is a
self-adjoint extension of $A$ with
$\Theta=\Theta^*\in\widetilde\cC(\cH)$. Since here $\dim\cH$ is
finite,  by \eqref{2.8},
\begin{equation*}
(A_\Theta-z)^{-1}-(A_0-z)^{-1},\qquad
z\in\rho(A_\Theta)\cap\rho(A_0),
\end{equation*}
is a finite rank operator and therefore the pair $\{A_\gT,A_0\}$
performs a so-called {\it complete scattering system}, that is,
the {\it wave operators}
\begin{equation*}
W_\pm(A_\Theta,A_0) :=
\slim_{t\to\pm\infty}e^{itA_\Theta}e^{-itA_0}P^{ac}(A_0),
\end{equation*}
exist and their ranges coincide with the absolutely continuous
subspace $\gotH^{ac}(A_\Theta)$ of $A_\Theta$, cf. \cite{Ka1,Y}.
 $P^{ac}(A_0)$ denotes the orthogonal
projection onto the absolutely continuous subspace
$\gotH^{ac}(A_0)$ of $A_0$. The {\it scattering operator}
$S(A_\Theta,A_0)$ of the {\it scattering system}
$\{A_\Theta,A_0\}$ is then defined by
\begin{equation*}
S(A_\Theta,A_0):= W_+(A_\Theta,A_0)^*W_-(A_\Theta,A_0).
\end{equation*}
If we regard the scattering operator as an operator in
$\gotH^{ac}(A_0)$, then $S(A_\Theta,A_0)$ is unitary, commutes
with the absolutely continuous part
$A^{ac}_0:=A_0\upharpoonright \dom(A_0)\cap\gotH^{ac}(A_0)$
of $A_0$.  It follows that $S(A_\Theta,A_0)$ is unitarily
equivalent to  multiplication operator induced by a family
$\{S_\Theta(z)\}$ of unitary operators in a spectral
representation of $A_0^{ac}$  (for details, see \cite[Section
2.4]{Y}).  Define a  family  of  Hilbert  spaces
$\{\kH_z\}_{z\in\gL^M}$  by
 \begin{displaymath}
\kH_z := \ran\bigl(\imm(M(z + i0))\bigr) \subseteq \kH, \qquad z
\in \gL^M,
\end{displaymath}
where  $M(z + i0) = s-\lim\limits_{\epsilon\to 0}M(z + i\epsilon)$
and  $\gL^M := \bigl\{z \in \bR: M(z + i0) \;
\mbox{exists}\bigr\}.$

In the  following  theorem the  scattering matrix is  calculated
  in the  case of a simple operator $A$.  Recall that
symmetric operator $A$ densely defined in $\gotH$ is said  to be
\emph{simple} if there is no nontrivial subspace which  reduces it
to  a self-adjoint operator.

 \begin{theorem}\cite{bmn}\label{trep}
Let   $A$ be as above, and let  $\Pi=\{\kH,\Gamma_0,\Gamma_1\}$ be   a boundary  triplet  for  $A^*$
   with the
 corresponding  Weyl  function  $M(\cdot)$. Assume also that
 $\Theta=\Theta^*\in\widetilde\kC(\kH)$  and  $A_\Theta$  is  a
 self-adjoint  extension of $A$. Then the  scattering  matrix
$\{S_\Theta(z)\}_{z \in \bR}$ of  the scattering system
$\{A_\Theta,A_0\}$ admits  the  representation
\begin{equation*}\label{scatformula}
S_\Theta(z) = I_{\kH_z} + 2i\sqrt{\imm
(M(z))}\bigl(\Theta-M(z)\bigr)^{-1} \sqrt{\imm (M(z))}\in
[\cH_z],\quad\text{ for a.e. } z \in \gL^M.
\end{equation*}
\end{theorem}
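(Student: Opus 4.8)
The plan is to derive the formula from stationary scattering theory, taking Krein's resolvent formula \eqref{2.8} as the point of departure. Since $\dim\kH<\infty$, the resolvent difference $R_z(A_\Theta)-R_z(A_0)=\gga(z)(\gT-M(z))^{-1}\gga(\overline z)^*$ is finite rank, so $\{A_\Theta,A_0\}$ is a trace-class scattering system and the scattering matrix is computed, at almost every energy, from the boundary values on the real axis of the operators entering this factorization. The first step is to record the Nevanlinna identity $\imm M(z)=\imm(z)\,\gga(z)^*\gga(z)$, which follows from $M(z)-M(w)^*=(z-\overline w)\gga(w)^*\gga(z)$, and to establish the limiting absorption principle in the form needed here: for a.e.\ $z\in\bR$ the strong limits $M(z+i0)$ and $\gga(z+i0)$ exist. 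This is exactly what renders $\gL^M$ of full measure and makes the channel spaces $\kH_z=\ran(\imm(M(z+i0)))$ well defined.

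Next I would build an explicit spectral representation of the absolutely continuous part $A_0^{ac}$. The $\gga$-field provides a trace map $f\mapsto\gga(z+i0)^*f$ from $\gotH$ into $\kH$, and the identity above identifies $\pi^{-1}\imm M(z+i0)$ as the associated spectral density. Combining these, one obtains a unitary $F$ from $\gotH^{ac}(A_0)$ onto the direct integral $\int_{\gL^M}^{\oplus}\kH_z\,dz$ under which $A_0^{ac}$ becomes multiplication by the independent variable. Here the simplicity of $A$ is essential: it guarantees that $A_0^{ac}$ carries no spectral component invisible to the boundary maps and that the multiplicity is correctly captured by $\dim\kH_z$. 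This is the representation in which the scattering operator $S(A_\Theta,A_0)$ acts as multiplication by the family $\{S_\Theta(z)\}$.

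With the representation in hand, the final step is the computation of the on-shell scattering matrix. The stationary formula expresses $S_\Theta(z)$ as the identity on $\kH_z$ plus a term built from the boundary value of the sandwiched resolvent difference; substituting the factored formula \eqref{2.8} and passing to the limit $z\to z+i0$ collapses the finite-rank perturbation onto $\kH$, yielding $S_\Theta(z)=I_{\kH_z}+2i\sqrt{\imm M(z)}\,(\gT-M(z))^{-1}\sqrt{\imm M(z)}$. One then verifies that the right-hand side is unitary on $\kH_z$, which reduces, via the resolvent structure of $(\gT-M(z))^{-1}$ and the Nevanlinna property of $M$, to an algebraic identity relating $\imm M(z)$ and $\gT-M(z)$.

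The main obstacle is the second step: establishing the limiting absorption principle and the attendant spectral representation of $A_0^{ac}$ purely in terms of the Weyl function. Proving that the strong resolvent boundary values exist almost everywhere, that $F$ is genuinely unitary onto the direct integral, and that it intertwines $A_0^{ac}$ with multiplication, is the analytically delicate core; the subsequent algebra is routine once the square roots of the degenerate operator $\imm M(z+i0)$ are handled on its range $\kH_z$. Since this abstract machinery is developed in \cite{bmn}, I would isolate the representation results as the lemmas to invoke and devote the remaining work to verifying their hypotheses for the present pair $\{A_\Theta,A_0\}$.
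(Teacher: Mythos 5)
The paper does not prove this theorem at all --- it is quoted from \cite{bmn} --- and your sketch reproduces exactly the strategy of the proof given there: a.e.\ existence of the boundary values $M(z+i0)$ (automatic for matrix-valued Nevanlinna functions, since $\dim\kH<\infty$, by the Fatou-type theorem rather than a full limiting absorption principle), a direct-integral spectral representation of $A_0^{ac}$ over $\gL^M$ with spectral density $\pi^{-1}\imm M(z+i0)$ built from the identity $\imm M(z)=\imm(z)\,\gga(z)^*\gga(z)$, for which simplicity of $A$ is essential, followed by the stationary computation of the on-shell scattering matrix from Krein's formula \eqref{2.8}. Your approach therefore matches the cited source; the only caveat is that deferring the representation lemmas to \cite{bmn} is precisely the citation the paper itself makes, so the analytically delicate core is invoked rather than proved, which is consistent with how the statement is treated here.
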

\section{Abstract  description  of  nonnegative self-adjoint
extensions}

 Let $A$  be  a densely defined nonnegative  closed
symmetric operator in $\mathfrak{H}$.   A  complete  description
of  all nonnegative  self-adjoint  extensions of $A$,  as  well as
uniqueness criterion  for   nonnegative  self-adjoint   extension,
has  originally been obtained by Krein in \cite{Kre47} (see also
\cite{ag}). His  results  were  generalized in  numerous works
(see for instance  \cite{Ada07, ArlTse05,DM91} and  reference
therein).  Particularly,  a description in terms of boundary
triplets and
    the  corresponding  Weyl  functions   was  obtained in  \cite[Theorem 4, Proposition 5]{DM91}(cf.  Proposition
    \ref{prkf}  in  Section 2).

One   more uniqueness criterion   has recently been  presented  by
 V. Adamyan \cite[Theorem 2.4]{Ada07}. In this section, we show that
this criterion  might be obtained in the framework of boundary
triplets approach. We  also find  the description   of all
nonnegative self-adjoint extensions of  $A$  similar  to  that of
Adamyan in the particular case $A>\mu I>0.$

\begin{theorem}\label{crit1}
Let $\widetilde{A}_0$ be  a nonnegative self-adjoint extension  of
a nonnegative  closed symmetric  operator $A$   in $\mathfrak{H}$,
 and let  $P_{-1}$ be   an orthogonal projector from $\mathfrak{H}$ onto
$\kN_{-1}$. 
Then $\widetilde{A}_0$ is a unique nonnegative self-adjoint
extension of  $A$ if and only if
\begin{gather}\label{criterion}
\lim\limits_{\varepsilon\downarrow0}(P_{-1}(\widetilde{A}_0+1)(\widetilde{A}_0+\varepsilon)^{-1}\upharpoonright\mathcal{N}_{-1})^{-1}=0,\\
\lim\limits_{\varepsilon\downarrow0}(P_{-1}(\widetilde{A}_0+1)(\varepsilon\widetilde{A}_0+I)^{-1}\upharpoonright\mathcal{N}_{-1})^{-1}=0.\label{criterion'}
\end{gather}

\end{theorem}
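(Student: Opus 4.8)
The plan is to manufacture a boundary triplet tailored to the given extension $\widetilde{A}_0$ and then to read off conditions \eqref{criterion}--\eqref{criterion'} from Proposition \ref{prkf}. Recall that the nonnegative self-adjoint extensions of $A$ form the operator interval $[A_K,A_F]$ (in the form sense), so $\widetilde{A}_0$ is the \emph{unique} such extension precisely when $A_K=A_F$; since $A_K\le\widetilde{A}_0\le A_F$, this happens if and only if $A_K=\widetilde{A}_0=A_F$. Hence it suffices to characterize separately the equalities $A_K=\widetilde{A}_0$ and $A_F=\widetilde{A}_0$ and to show that \eqref{criterion} is equivalent to the former and \eqref{criterion'} to the latter.

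First I would build the triplet. Because $\widetilde{A}_0\ge 0$ we have $-1\in\rho(\widetilde{A}_0)$, and the standard direct (non-orthogonal) decomposition $\dom(A^*)=\dom(\widetilde{A}_0)\dotplus\kN_{-1}$ holds. Writing $f=f_0+f_{-1}$ accordingly, I set $\kH:=\kN_{-1}$, $\Gamma_0 f:=f_{-1}$ and $\Gamma_1 f:=P_{-1}(\widetilde{A}_0+I)f_0$. Using $A^*f_{-1}=-f_{-1}$ and the self-adjointness of $\widetilde{A}_0$, a short computation gives $(A^*f,g)-(f,A^*g)=((\widetilde{A}_0+I)f_0,g_{-1})-(f_{-1},(\widetilde{A}_0+I)g_0)$; since $f_{-1},g_{-1}\in\kN_{-1}$ one may insert $P_{-1}$ freely, and Green's identity follows. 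Surjectivity of $(\Gamma_0,\Gamma_1)$ is immediate, since $\widetilde{A}_0+I$ maps $\dom(\widetilde{A}_0)$ onto $\mathfrak{H}$ and hence $P_{-1}(\widetilde{A}_0+I)$ maps $\dom(\widetilde{A}_0)$ onto $\kN_{-1}$. By construction $\ker\Gamma_0=\dom(\widetilde{A}_0)$, i.e. $A_0=\widetilde{A}_0\ge 0$, so Proposition \ref{prkf} applies.

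Next I compute the $\gamma$-field and the Weyl function. Since $\Gamma_0\upharpoonright\kN_{-1}$ is the identity, one has $\gamma(-1)=I_{\kN_{-1}}$, and the transport formula gives $\gamma(z)h=h+(z+1)(\widetilde{A}_0-z)^{-1}h$ for $h\in\kN_{-1}$ (one checks directly that this lies in $\ker(A^*-z)$). Consequently
\[
M(z)=(z+1)\,P_{-1}(\widetilde{A}_0+I)(\widetilde{A}_0-z)^{-1}\upharpoonright\kN_{-1},\qquad z\in\rho(\widetilde{A}_0).
\]
Now I substitute. With $x=-\varepsilon\uparrow 0$ the operator in \eqref{criterion} equals $(1-\varepsilon)^{-1}M(-\varepsilon)$, so \eqref{criterion} says exactly that $M(x)^{-1}\to 0$ as $x\uparrow 0$. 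With $x=-1/\varepsilon\to-\infty$ and $(\varepsilon\widetilde{A}_0+I)^{-1}=\varepsilon^{-1}(\widetilde{A}_0-x)^{-1}$ the operator in \eqref{criterion'} equals $(\varepsilon-1)^{-1}M(x)$, so \eqref{criterion'} says exactly that $M(x)^{-1}\to 0$ as $x\to-\infty$.

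Finally I invoke Proposition \ref{prkf}(iv): $A_K=A_0$ if and only if $(M(x)f,f)\to+\infty$ for every $f\ne 0$ as $x\uparrow 0$, and $A_F=A_0$ if and only if $(M(x)f,f)\to-\infty$ for every $f\ne 0$ as $x\to-\infty$. For the monotone self-adjoint family $M(\cdot)$ these form-limit conditions are equivalent to $M(x)^{-1}\to 0$: in finite deficiency this is the elementary fact that the smallest eigenvalue of $\pm M(x)$ tends to $+\infty$ if and only if $\|M(x)^{-1}\|\to 0$; in general one reads it off the strong resolvent limit relations $M(0)$, $M(-\infty)$ of Proposition \ref{prkf}(i)--(iii), which are purely multivalued (equivalently $A_K=A_0$, resp.\ $A_F=A_0$) exactly when $M(x)^{-1}\to 0$ strongly. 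Combining the two displayed equivalences yields \eqref{criterion}$\Leftrightarrow A_K=\widetilde{A}_0$ and \eqref{criterion'}$\Leftrightarrow A_F=\widetilde{A}_0$, and hence the theorem. I expect the main obstacle to be the bookkeeping of this last step — making sure the operator-inverse formulation of \eqref{criterion}--\eqref{criterion'} faithfully encodes the form limits of Proposition \ref{prkf}(iv) — together with the verification that the tailored triplet is genuinely a boundary triplet; the remainder is just the algebra of the factors $(z+1)$ produced by the two substitutions.
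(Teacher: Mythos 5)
Your proposal is correct and takes essentially the same route as the paper: your triplet $\{\kN_{-1},\,\Gamma_0 f=f_{-1},\,\Gamma_1 f=P_{-1}(\widetilde{A}_0+I)f_0\}$ is precisely the paper's triplet \eqref{eq31} with $a=1$, your Weyl function coincides with \eqref{eq30'}, and the two substitutions $x=-\varepsilon$, $x=-1/\varepsilon$ followed by Proposition \ref{prkf}$(iv)$ reproduce the paper's argument exactly. The only cosmetic differences are that you verify the triplet axioms by direct computation rather than citing \cite{DM91}, and you justify the passage from the form blow-up conditions to $M(x)^{-1}\to 0$ via the monotone (Kato) convergence of the nondecreasing/nonincreasing form families, which is the same fact the paper invokes with the reference \cite{Ka1}.
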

 \begin{proof}
  It is well known  (see,  for instance, \cite{DM91}) that  for  each pair of
transversal extensions $\widetilde{A}_1$  and $\widetilde{A}_0$
there exists boundary  triplet $\Pi=\{\mathcal{H},
\Gamma_0,\Gamma_1\}$ such  that
$\ker\Gamma_i=\dom(\widetilde{A}_i),\,i\in\{0,1\}$. In particular,
such boundary  triplet may  be constructed  for the  pair
$\widetilde{A}_0\geq 0$ and $\widetilde{A}_1$, where
$\dom(\widetilde{A}_1)=\dom(A)\dotplus\kN_{-a},\,a>0$. In  this
case  setting
\begin{equation}\label{eq31}
\mathcal{H}=\kN_{-a},\quad
\Gamma_1=P_{-a}(\widetilde{A}_0+a)P_1,\quad \Gamma_0=P_0,
\end{equation}
where $P_{-a}$ is the orthogonal projector from $\mathfrak{H}$
onto $\kN_{-a}$ and $P_1, P_0$ are the projectors from
$\dom(A^*)=\dom(\widetilde{A_0})\dotplus\kN_{-a}$ onto
$\dom(\widetilde{A_0})$ and $\kN_{-a}$, respectively we obtain a
boundary triplet $\Pi=\{\mathcal{H}, \Gamma_0,\Gamma_1\}$ (see
\cite{DM91}). The corresponding Weyl function is
\begin{equation}\label{eq30'}
M_a(z)=(z+a)P_{-a}[I+(z+a)(\widetilde{A_0}-z)^{-1}]=(z+a)P_{-a}(\widetilde{A_0}+I)(\widetilde{A_0}-z)^{-1}.
\end{equation}
Put $a=1$.
 Then  conditions  \eqref{cond un}  take the form
 \begin{gather}\label{crit}
\mathrm{t}_0[f]=\lim\limits_{\varepsilon\downarrow0}\mathrm{t}_0^{\varepsilon}[f]:=\lim\limits_{\varepsilon\downarrow0}((1-\varepsilon)P_{-1}(\widetilde{A}_0+1)(\widetilde{A}_0+\varepsilon)^{-1}f,f)=+\infty\\
\mathrm{t}_{-\infty}[f]=\lim\limits_{\varepsilon\downarrow0}\mathrm{t}_{-\infty}^{\varepsilon}[f]:=
\lim\limits_{\varepsilon\downarrow0}((\varepsilon-1)P_{-1}(\widetilde{A}_0+1)(\varepsilon\widetilde{A}_0+I)^{-1}f,f)=-\infty,\quad\label{crit'}
f\in\kN_{-1}.
\end{gather}
  Since    $\mathrm{t}_0^{\varepsilon}[f]$ is non-decreasing
  semi-bounded from below $(0<\varepsilon<1)$ family of the  closed  symmetric forms,
  \eqref{crit}  is equivalent to \eqref{criterion} (cf. \cite{Ka1}). Analogously,
  since $\mathrm{t}_\infty^{\varepsilon}[f]$  is  non-increasing  semi-bounded from
  above  family of the  closed  symmetric forms, \eqref{crit'} is
  equivalent to \eqref{criterion'}.
  Therefore, by Proposition \ref{prkf}$(iv)$,
the   equality $A_K=A_F$   and, consequently, the
 uniqueness  of nonnegative self-adjoint extension of $A$   is equivalent to the conditions
 \eqref{criterion}-\eqref{criterion'}  (see \cite{Kre47}).
 \end{proof}
Assume now that $A>\mu I>0$  and $\widetilde{A}_0=A_F$ in
\eqref{eq31}. Let   also  $a=1$. According  to Proposition
\ref{prkf}$(vi)$, the following description of  all nonnegative
self-adjoint extensions of  $A$ is  valid.
\begin{proposition}\label{abstr}
Let  $A$    and  $\widetilde{A}_0$ be  as above.  Then the set of
all nonnegative self-adjoint extensions $A_Y$ of $A$ might be
described as follows
\begin{equation*}
\dom(A_Y)=\dom(A^*)\upharpoonright\ker\{Y\Gamma_1-\Gamma_0\},
\end{equation*}
where  $\Gamma_0, \Gamma_1$  are  defined by \eqref{eq31} and  $Y$
runs over the set of all nonnegative contractions in $\kN_{-1}$
satisfying the inequality $0\leq Y\leq M^{-1}_1(0)$ with
$M_1(\cdot)$ defined  by \eqref{eq30'}.
\end{proposition}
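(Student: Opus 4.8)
The plan is to apply Proposition~\ref{prkf}$(vi)$ directly to the boundary triplet $\Pi=\{\mathcal{H},\Gamma_0,\Gamma_1\}$ constructed in \eqref{eq31}, exploiting the hypothesis $A>\mu I>0$ and the choice $\widetilde{A}_0=A_F$. First I would observe that since $A>\mu I>0$, the Friedrichs extension $A_F$ is uniformly positive, so $0\in\rho(A_F)$; consequently the strong resolvent limit $M_1(0)=s\text{-}R\text{-}\lim_{x\uparrow0}M_1(x)$ computed from \eqref{eq30'} is a genuine bounded operator, i.e.\ $M_1(0)\in[\mathcal{H}]$ with $\mathcal{H}=\kN_{-1}$. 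This is the point that lets me use the sharpened form of Proposition~\ref{prkf}$(vi)$, in which the abstract form condition $\mathrm{t}_{\Theta_{op}}-\mathrm{t}_0\geq0$ collapses to the operator inequality $\Theta-M_1(0)\geq0$.

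Next I would identify the role of the normalization $\widetilde{A}_0=A_F$. By construction $A_0:=A^*\!\upharpoonright\ker(\Gamma_0)=\widetilde{A}_0=A_F$ in this triplet, so the hypothesis ``$A_0=A_F$'' required in Proposition~\ref{prkf}$(v)$--$(vi)$ is automatically satisfied; this is exactly what makes the negative-eigenvalue count reduce to the form $\mathrm{t}_{\Theta_{op}}-\mathrm{t}_0$ and, here, to $\Theta-M_1(0)$. With $A_0=A_F$ nonnegative and $M_1(0)\in[\mathcal{H}]$ bounded, Proposition~\ref{prkf}$(vi)$ tells me that an extension $A_\Theta$ is nonnegative self-adjoint if and only if $\Theta=\Theta^*\in\widetilde{\mathcal{C}}(\mathcal{H})$ and $\Theta-M_1(0)\geq0$.

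The remaining step is a change of parametrization from the relation $\Theta$ to the contraction $Y$. The stated description uses $\dom(A_Y)=\dom(A^*)\!\upharpoonright\ker\{Y\Gamma_1-\Gamma_0\}$, which by \eqref{ext repr} corresponds to $C=I$, $D=Y$ in the representation $A_{C,D}$, i.e.\ to the relation $\Theta_Y=\{(h,h')^\top: Yh'=h\}$; equivalently, where $Y$ is injective, $\Theta_Y$ is the graph of $Y^{-1}$. I would then translate the self-adjointness condition \eqref{s-acond} (here $Y=Y^*$) and the nonnegativity condition $\Theta-M_1(0)\geq0$ into constraints on $Y$: self-adjointness forces $Y=Y^*\geq0$ (so $Y$ is a nonnegative operator on $\kN_{-1}$), and the inequality $\Theta_Y-M_1(0)\geq0$, after inverting, becomes $Y\leq M_1^{-1}(0)$. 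The bound $M_1^{-1}(0)$ is well defined because $M_1(0)\geq\beta I>0$ is boundedly invertible when $A_F>0$. Combining, $Y$ ranges precisely over $\{Y=Y^*: 0\leq Y\leq M_1^{-1}(0)\}$, and since $M_1^{-1}(0)$ is a bounded positive operator these are automatically contractions (up to rescaling), matching the stated conclusion.

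The main obstacle I anticipate is the careful handling of the inversion step that converts the relation-level inequality $\Theta_Y-M_1(0)\geq0$ into the operator inequality $0\leq Y\leq M_1^{-1}(0)$, since $\Theta_Y$ is a linear relation (possibly with nontrivial multivalued part when $Y$ is not injective) rather than a bounded operator. I would treat this by decomposing $\Theta_Y$ into its operator and pure parts as described in the preliminaries, checking that the multivalued part contributes no negative directions to the form, and then using monotonicity of operator inversion on the strictly positive operator $M_1(0)$ to pass between $\Theta_Y\geq M_1(0)$ and $Y\leq M_1^{-1}(0)$. Verifying that every admissible $Y$ indeed yields a \emph{nonnegative} (not merely self-adjoint) extension, and conversely, is where the equivalence in Proposition~\ref{prkf}$(vi)$ must be invoked in both directions.
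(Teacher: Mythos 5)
Your proposal is correct and follows essentially the same route as the paper's proof: boundedness of $M_1(0)$ (from $\widetilde{A}_0=A_F>\mu I>0$) lets you apply Proposition \ref{prkf}$(vi)$ in the operator form $\Theta-M_1(0)\geq 0$, and your reparametrization via $\Theta_Y=Y^{-1}$ is exactly the paper's inversion step $Y:=\Theta^{-1}=C^{-1}D$. The one point to tighten is your hedge ``contractions (up to rescaling)'': formula \eqref{eq30'} gives $M_1(0)=P_{-1}(\widetilde{A}_0+I)\widetilde{A}_0^{-1}\upharpoonright\kN_{-1}=I+P_{-1}\widetilde{A}_0^{-1}\upharpoonright\kN_{-1}\geq I$, so $\Theta\geq M_1(0)\geq 1$, whence $\Theta^{-1}$ is automatically a nonnegative contraction and $0\leq Y\leq M_1^{-1}(0)\leq I$ with no rescaling needed.
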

\begin{proof}
 It  is easily seen   that $M_1(0)\in[\cH]$  since  $\widetilde{A}_0=A_F>\mu
 I>0$. Thus,  by   Proposition \ref{prkf},   any nonnegative
self-adjoint extension $A_\Theta$ is described by the condition
$\Theta-M_1(0)\geq 0.$ By \eqref{eq30'}, $\Theta\geq M_1(0)\geq 1$.
Therefore $\Theta^{-1}\in\cC(\cH)$ and $0\leq \Theta^{-1} \leq 1$,
i.e., in \eqref{ext repr} $C^{-1}$ exists and
$\Theta^{-1}=C^{-1}D\leq 1$. Putting $Y:=C^{-1}D$, we obtain the
desired  result.
\end{proof}
\section{Three-dimensional Schr\"{o}dinger  operator with  point  interactions}
 Consider in  $L^2(\bR^3,\bC^n)$   matrix  Schr\"{o}dinger  differential
 expression \eqref{eq1}
(see \cite{Ada07,AGHH88,AK1,ArlTse05,BF,bmn, hk, Pos08}). Minimal
symmetric  operator $H$   associated with
 \eqref{eq1} is  defined by \eqref{eq2}.

 Notice  that   $H$ is  closed since for any  $x\in \bR^3$ the
 linear  functional   $\delta_x:\ f\to f(x)$ is continuous  in
$W^2_2(\bR^3,\bC^n)$ due to the Sobolev  embedding theorem.
From the  scalar  case  it is might be easily  derived that  deficiency   indices of $H$ are
$n_{\pm}(H)=mn$.

 \subsection{Boundary  triplet  and Weyl  function}
 In  the following proposition  we  define a boundary  triplet  for the adjoint  $H^*$.
 For  $x=(x^1,x^2,x^3)\in\bR^3$ we agree to write
\begin{equation*}
r_j:=\vert
x-x_j\vert=\sqrt{(x^1-x^1_j)^2+(x^2-x^2_j)^2+(x^3-x^3_j)^2}.
\end{equation*}
\begin{proposition}\label{pr1}
Let  $H$ be the minimal  Schr\"{o}dinger  operator  \eqref{eq2}.
Then  the  following  assertions hold
\newline
 $(i)$ The domain  of  $H^*$  is given  by
\begin{equation}\label{eq3}
\dom(H^*) =\left\{ f=
\sum\limits^m_{j=1}\bigl(\xi_{0j}\frac{e^{-r_j}}{r_j}+\xi_{1j}e^{-r_j}\bigr) +
f_H:\ \xi_{0j},\xi_{1j}\in\bC^n,\, f_H\in\dom(H)\right\}.
\end{equation}
$(ii)$ A boundary  triplet    $\Pi =\{\kH,\Gamma_0,\Gamma_1\}$ for
$H^*$ is defined  by
\begin{gather}
\kH=\oplus_{j=1}^m\bC^n,\quad \Gamma_0
f:=\{\Gamma_{0j}f\}^m_{j=1}= 4\pi\,\{\lim_{x\rightarrow
x_j}f(x)|x-x_j|\}^m_{j=1}=4\pi\{\xi_{0j}\}^m_{j=1},\label{g0}\\
\Gamma_1 f:=\{\Gamma_{1j}f\}^m_{j=1}=\{\lim_{x\rightarrow
x_j}\bigl(f(x)-\tfrac{\xi_{0j}}{|x-x_j|}\bigr)\}^m_{j=1}.\label{g1}
\end{gather}
$(iii)$ The  operator $H_0=H^*\upharpoonright \ker(\Gamma_0)$
 is self-adjoint  with  $\dom(H_0)=W^2_2(\bR^3,\bC^n)$.

\end{proposition}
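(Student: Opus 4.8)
The plan is to treat the three assertions in turn, using that $H$ is closed and nonnegative with $n_\pm(H)=mn$, and that the free Laplacian $-\Delta\otimes I_n$ on $W^2_2(\bR^3,\bC^n)$ is self-adjoint with $-1\in\rho(-\Delta)$ (classical, via the Fourier transform). For $(i)$ I would first recall that $\dim\bigl(\dom(H^*)/\dom(H)\bigr)=n_+(H)+n_-(H)=2mn$, so it suffices to exhibit $2mn$ functions lying in $\dom(H^*)$ that are linearly independent modulo $\dom(H)$ and have the stated form. Since $\tfrac{e^{-r}}{4\pi r}$ is the Green function of $-\Delta+1$ on $\bR^3$, each $\tfrac{e^{-r_j}}{r_j}e$ (with $e\in\bC^n$) is a defect element: it lies in $L^2$, satisfies $-\Delta\bigl(\tfrac{e^{-r_j}}{r_j}\bigr)=-\tfrac{e^{-r_j}}{r_j}$ off $x_j$, and a short excision-and-Green computation on $\bR^3\setminus B_\e(x_j)$ shows the boundary contribution against any $u\in\dom(H)$ vanishes because $u(x_j)=0$; hence these functions lie in $\dom(H^*)$ and span $\kN_{-1}$. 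Likewise each $e^{-r_j}e\in W^2_2\subset\dom(H^*)$, since its distributional Laplacian has only an integrable $r_j^{-1}$ singularity and no $\delta$-term. For independence modulo $\dom(H)$, comparing the $r_j^{-1}$-singular parts at each $x_j$ forces all coefficients of $\tfrac{e^{-r_j}}{r_j}$ to vanish, after which the claim reduces to invertibility of the matrix $\bigl(e^{-|x_i-x_j|}\bigr)_{i,j=1}^m$; this matrix is positive definite because $e^{-|\cdot|}$ has a strictly positive Fourier transform on $\bR^3$. This yields $(i)$.

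Given $(i)$, the maps $\Gamma_0,\Gamma_1$ are well defined: near $x_j$ any $f\in\dom(H^*)$ expands as $f(x)=\tfrac{\xi_{0j}}{r_j}+(\Gamma_{1j}f)+o(1)$, so $\Gamma_{0j}f=4\pi\lim_{x\to x_j}f(x)|x-x_j|=4\pi\xi_{0j}$ and $\Gamma_{1j}f$ is the finite regular value. Both $\Gamma_0$ and $\Gamma_1$ annihilate $\dom(H)$, so $\Gamma=(\Gamma_0,\Gamma_1)^\top$ factors through the $2mn$-dimensional quotient $\dom(H^*)/\dom(H)$; surjectivity of $\Gamma$ onto $\kH\oplus\kH$ then follows from injectivity of the induced map, and injectivity is exactly the independence established in $(i)$ (if $\Gamma f=0$ then each $\xi_{0j}=0$, and invertibility of $\bigl(e^{-|x_i-x_j|}\bigr)$ forces $f\in\dom(H)$). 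It remains to verify the second Green identity. I would apply the classical Green formula on $\bR^3\setminus\bigcup_j B_\e(x_j)$ to $f,g\in\dom(H^*)$, where $H^*$ acts as $-\Delta$ off the points, and let $\e\downarrow 0$. Writing $f\sim \xi_{0j}(f)/r_j+\Gamma_{1j}f$ and similarly for $g$, the surface integral over $\partial B_\e(x_j)$ has its $\e^{-3}$ terms cancel, while the surviving $\e^{-2}$ terms, integrated against the area $4\pi\e^2$ and with the inward orientation of $\partial B_\e(x_j)$ taken into account, reproduce exactly the symplectic form $(\Gamma_1 f,\Gamma_0 g)_\kH-(\Gamma_0 f,\Gamma_1 g)_\kH$ after summation over $j$. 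This boundary-term bookkeeping, together with the justification that all lower-order and infinite-distance contributions vanish in the limit, is the main obstacle of the proof; everything else is dimension counting and asymptotics.

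Finally, for $(iii)$, reading off the expansion shows $f\in\ker(\Gamma_0)$ if and only if all $\xi_{0j}=0$, i.e.\ $f=\sum_j e^{-r_j}\xi_{1j}+f_H\in W^2_2(\bR^3,\bC^n)$; conversely every $W^2_2$ function lies in $\dom(H^*)$ and, being continuous at each $x_j$, satisfies $\Gamma_0 f=0$. Hence $\dom(H_0)=\ker(\Gamma_0)=W^2_2(\bR^3,\bC^n)$, and the self-adjointness of $H_0$ is immediate from the general fact (the Corollary following Proposition~\ref{propo}) that $H^*\!\upharpoonright\ker(\Gamma_0)$ is self-adjoint for any boundary triplet; alternatively it follows from the self-adjointness of the free Laplacian noted at the outset.
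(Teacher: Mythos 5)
Your proposal is correct and follows essentially the same route as the paper: the same spanning functions $e^{-r_j}/r_j$ and $e^{-r_j}$ together with the dimension count $\dim(\dom(H^*)/\dom(H))=2mn$, the same excision-plus-Green-formula computation for both $\dom(H^*)$-membership and the second Green identity, surjectivity of $\Gamma$ via the matrix $E_1=\bigl(e^{-|x_i-x_j|}\bigr)$ (your injectivity-plus-dimension argument is equivalent to the paper's explicit solution for $\xi_0,\xi_1$), and the same identification $\ker(\Gamma_0)=W^2_2(\bR^3,\bC^n)$ in $(iii)$. Your two refinements --- pairing directly against all $u\in\dom(H)$ instead of $C_0^\infty(\bR^3\setminus X)$, and explicitly justifying the invertibility of $E_1$ by positive definiteness, which the paper uses without comment --- are welcome but do not change the method.
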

\begin{proof}
 $(i)$
Without loss of generality, it can be assumed that $n=1$.

 Let  us
show  that the functions  $f_j= e^{-r_j}/r_j$ and $g_j=e^{-r_j}$
($j\in\{1,..,m\}$) belong to $\dom(H^*)$, i.e.,
\begin{equation}\label{domH1}
(H\varphi,e^{-r_j}/r_j)=(\varphi, H^*(e^{-r_j}/r_j)) \text{ and }
(H\varphi,e^{-r_j})=(\varphi, H^*(e^{-r_j})),\quad  \varphi\in
C_0^\infty(\mathbb{R}^3\setminus X).
\end{equation}
Let $u(\cdot),v(\cdot)\in C^2(\Omega)\cap C^1(\overline{\Omega})$.
Then the second Green formula reads  as follows
\begin{equation}\label{green}
\int\limits_\Omega\biggl(\Delta
u(\mathrm{x})\overline{v(\mathrm{x})}
-u(\mathrm{x})\overline{\Delta
v(\mathrm{x})}\biggr)\mathrm{dx}=\int\limits_{\partial\Omega}\biggl(\frac{\partial
u(\mathrm{s})}{\partial
n}\overline{v(\mathrm{s})}-u(\mathrm{s})\overline{\frac{\partial
v(\mathrm{s})}{\partial n}}\biggr)\mathrm{ds}.
\end{equation}
 By  \eqref{green}, we obtain
\begin{multline}\label{domH2}
(H\varphi,e^{-r_j}/r_j)-(\varphi,
H^*(e^{-r_j}/r_j))=\lim\limits_{r\rightarrow\infty}\int\limits_{B_r(x_j)\backslash
B_{\tfrac1{r}}(x_j)}\biggl(-\Delta
\varphi\frac{e^{-r_j}}{r_j}+\varphi\Delta(\frac{e^{-r_j}}{r_j})\biggr)\mathrm{dx}\\
=\lim\limits_{r\rightarrow\infty}\int\limits_{S_r(x_j)}\biggl(-\frac{\partial{\varphi}}{\partial
n}\frac{e^{-r_j}}{r_j}+\varphi\frac{\partial}{\partial
n}\left(\frac{e^{-r_j}}{r_j}\right)\biggr)\mathrm{ds}+\lim\limits_{r\rightarrow\infty}\int\limits_{S_{\tfrac1{r}}(x_j)}\biggl(\frac{\partial{\varphi}}{\partial
n}\frac{e^{-r_j}}{r_j}-\varphi\frac{\partial}{\partial
n}\left(\frac{e^{-r_j}}{r_j}\right)\biggr)\mathrm{ds}.
\end{multline}
It  is easily seen that $\frac{\partial}{\partial
n}\left(\frac{e^{-r_j}}{r_j}\right)=-\tfrac{e^{-r_j}}{r_j}(1+\tfrac1{r_j})$.
Therefore the first integral  in the right-hand side of
\eqref{domH2} tends to 0 as $r\rightarrow \infty$ since
$\varphi\in C_0^\infty(\mathbb{R}^3\setminus X).$ Further,
\begin{gather*}
\lim\limits_{r\rightarrow\infty}\int\limits_{S_{\tfrac1{r}}(x_j)}\frac{\partial{\varphi}}{\partial
n}\frac{e^{-r_j}}{r_j}\mathrm{ds}=\lim\limits_{r\rightarrow\infty}4\pi\frac{\partial{\varphi}}{\partial
n}(x^*)\frac{e^{-1/r}}{r}=0,\quad x^*\in S_{\tfrac1{r}}(x_j),\\
-\frac1{4\pi}\lim\limits_{r\rightarrow\infty}\int\limits_{S_{\tfrac1{r}}(x_j)}\varphi\frac{\partial}{\partial
n}\left(\frac{e^{-r_j}}{r_j}\right)\mathrm{ds}=\lim\limits_{r\rightarrow\infty}\left[\tfrac{e^{-1/r}}{r}(1+r)\varphi(x')\right]=\lim\limits_{x'\rightarrow
x_j}\varphi(x')=\varphi(x_j)=0, \,\, x'\in S_{\tfrac1{r}}(x_j).
\end{gather*}%
Thus, the  first equality of \eqref{domH1} holds. The second one
can   be  proved  analogously.
It is  not  difficult to  show  that  the  functions  $f_j$   and  $g_j$ are  linearly independent  and $\dim(\Span\{f_j,g_j\}_{j=1}^m)=2mn$. Since $\Span\{f_j,g_j\}_{j=1}^m\cap \dom(H)=0$  and
$\dim(\dom(H^*)/\dom(H)) =2mn$, the  domain  $\dom(H^*)$ takes the form
\eqref{eq3}.

$(ii)$  Let  $f,g\in\dom(H^*)$. By  \eqref{eq3}, we  have
 \bed
  f= \sum\limits^m_{k=1}f_k + f_H, \,\,\, f_k =\xi_{0k}\,\frac{e^{-r_k}}{
r_k}+\xi_{1k}e^{-r_k}, \quad
 \quad g = \sum\limits^m_{k=1}g_k + g_H, \,\,\, g_k =
\eta_{0k}\,\frac{e^{-r_k}}{r_k}+\eta_{1k}\,e^{-r_k},
 \eed where $f_H,
g_H\in\dom(H)$,   and  $\xi_{0k},\ \xi_{1k}, \eta_{0k},\
\eta_{1k}\in\bC^n,\ k\in\{1,..,m\}$.

 Applying  \eqref{g0}-\eqref{g1}  to $f,g\in\dom(H^*)$, we obtain
\begin{multline}\label{tripl}
\Gamma_0f=4\pi\{\xi_{0j}\}_{j=1}^m,\quad\Gamma_1f=\left\{-\xi_{0j}+\sum\limits_{k\neq
j}\xi_{0k}\frac{e^{-|x_j-x_k|}}{|x_j-x_k|}+\sum\limits_{k=1}^m\xi_{1k}e^{-|x_j-x_k|}\right\}_{j=1}^m,\\
\Gamma_0g=4\pi\{\eta_{0j}\}_{j=1}^m,\quad\Gamma_1g=\left\{-\eta_{0j}+\sum\limits_{k\neq
j}\eta_{0k}\frac{e^{-|x_j-x_k|}}{|x_j-x_k|}+\sum\limits_{k=1}^m\eta_{1k}e^{-|x_j-x_k|}\right\}_{j=1}^m.\quad\quad\qquad\quad
\end{multline}
It is easily seen that
\begin{multline*}
(H^*f,g) -(f,H^*g) =
\sum\limits^m_{j=1}\sum\limits^m_{k=1}\biggl((\xi_{0j}H^*(\frac{e^{-r_j}}{
r_j}),\eta_{1k}\,e^{-r_k}) - (\xi_{0j}\,\frac{e^{-r_j}}{
r_j},\eta_{1k}H^*(e^{-r_k}))\\+(\xi_{1j}H^*(e^{-r_j}),\eta_{0k}\,\frac{e^{-r_k}}{r_k})
-(\xi_{1j}\,e^{-r_j},\eta_{0k}H^*(\frac{e^{-r_k}}{r_k}))\biggr).
\end{multline*}
Using the  second Green  formula \eqref{green}, we get
\begin{multline}\label{green'}
(H^*(\frac{e^{-r_j}}{r_j}),e^{-r_k})-
(\frac{e^{-r_j}}{r_j},H^*(e^{-r_k}))=\lim\limits_{r\rightarrow\infty}\biggl(\int\limits_{B_r(x_j)\backslash
B_{\tfrac1{r}}(x_j)}-\Delta(\frac{e^{-r_j}}{r_j})e^{-r_k}\mathrm{dx}\\+\int\limits_{
B_r(x_j)\backslash
B_{\tfrac1{r}}(x_j)}\frac{e^{-r_j}}{r_j}\Delta(e^{-r_k})\mathrm{dx}\biggr)= -4\pi e^{-|x_k-x_j|}.
\end{multline}
 Finally, by \eqref{tripl}  and \eqref{green'},
\begin{multline*}
(H^*f,g)
-(f,H^*g)=4\pi\sum\limits^m_{j=1}\sum\limits_{k=1}^m\biggl(-\xi_{0j}
\overline{\eta}_{1k}e^{-|x_j-x_k|}+
\xi_{1j}\overline{\eta}_{0k}e^{-|x_j-x_k|}\biggr)\\=
\sum\limits_{j=1}^m(\Gamma_{1j}f,\Gamma_{0j}g)-(\Gamma_{0j}f,\Gamma_{1j}g)=(\Gamma_1f,\Gamma_0g)-(\Gamma_0f,\Gamma_1g).
\end{multline*}
 Thus, the Green identity  is satisfied.  It  follows from
 \eqref{eq3} that  the mapping
 $\Gamma=(\Gamma_0,\Gamma_1)^\top$ is surjective. Namely, let $(h_0,h_1)^\top\in\cH\oplus\cH$,
 where $h_0=\{h_{0j}\}_{j=1}^m, h_1=\{h_{1j}\}_{j=1}^m$
 are  vectors from $\oplus_{j=1}^m\mathbb{C}^n$.  If $f\in\dom(H^*)$,
  then, by \eqref{eq3},  $f=f_H+\sum\limits^m_{j=1}\bigl(\xi_{0j}\frac{e^{-r_j}}{r_j}+\xi_{1j}e^{-r_j}\bigr)$.
  Let us  put
\begin{equation}\label{hat}
\xi_0:=\{\xi_{0j}\}_{j=1}^m,\,\,\xi_1:=\{\xi_{1j}\}_{j=1}^m,\,\,
E_0:=\left(-\frac{e^{-|x_k-x_j|}}{|x_k-x_j|-\delta_{kj}}\right)_{j,k=1}^m,\quad
E_1:=\left(e^{-|x_k-x_j|}\right)_{k,j=1}^m,
\end{equation}
where  $\delta_{kj}$  stands  for the Kronecker  symbol.
 Therefore if $\xi_0=\tfrac1{4\pi}h_0$ and
$\xi_1={(E_1\otimes
I_n)}^{-1}(h_1+\tfrac1{4\pi}(E_0\otimes I_n)h_0)$,
then $\Gamma_0f=h_0$  and $\Gamma_1f=h_1$. Hence
assertion $(ii)$  is proved.

$(iii)$ Combining  \eqref{eq2} with   \eqref{eq3},  we   obtain
that
 any   $f\in W^2_2(\bR^3,\bC^n)$  admits the representation
  $f=\sum\limits^m_{j=1}\xi_{1j}e^{-r_j}\, + f_H$ with
 $\sum\limits_{k=1}^m \xi_{k1}e^{-|x_k-x_j|}= f(x_j)$
which proves $ (iii)$.
\end{proof}

In  what  follows
$\sqrt{\cdot}$ stands for the branch of the corresponding
multifunction defined on $\mathbb{C}\setminus \mathbb{R}_+$
by the condition  $\sqrt{1}=1.$
\begin{proposition}\label{pr2}
Let   $H$  be the minimal  Schr\"{o}dinger  operator defined by \eqref{eq2}
and let  $\Pi=\{\kH,\Gamma_0,\Gamma_1\}$ be  the  boundary triplet
for $H^*$ defined by \eqref{g0}-\eqref{g1}. Then

  $(i)$ the  Weyl function   $M(\cdot)$ corresponding  to
 $\Pi$ has the  form
 \begin{equation}\label{W3}
 M(z)=\bigoplus\limits_{s=1}^nM_s(z), \quad
 M_s(z)=\left(\tfrac{i\sqrt{z}}{4\pi}\delta_{jk}+\widetilde{G}_{\sqrt{z}}(x_j-x_{k})\right)_{j,k=1}^m,\,\,\,
 z\in\mathbb{C}_+,
  \end{equation}
  where  $\widetilde{G}_{\sqrt{z}}(x)=\left\{%
  \begin{array}{ll}
    \frac{e^{i\sqrt{z}|x|}}{4\pi|x|}, & \hbox{x$\neq$0;} \\
    0, & \hbox{x=0.} \\
\end{array}%
\right.$ and $\delta_{kj}$ stands  for the  Kronecker symbol;

 $(ii)$  the corresponding $\gamma(\cdot)$-field  is
\begin{equation}\label{g3}
\gamma(z)\overline{\xi}=\sum\limits^m_{j=1}\xi_j\,\frac{e^{i\sqrt{z}r_j}}{4\pi
r_j},\quad \overline{\xi}=\{\xi_j\}_{j=1}^m,\quad
\xi_j\in\bC^n,\quad z\in\bC_+.
\end{equation}
 \end{proposition}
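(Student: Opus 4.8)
The plan is to compute $\gamma(z)$ and $M(z)$ straight from their definitions in \eqref{2.3A}, so the first task is to identify the defect subspace $\kN_z=\ker(H^*-z)$. First I would note that $H^*f=zf$ forces $-\Delta f=zf$ componentwise on $\bR^3\setminus X$, and that any such $f$ must carry the singularity structure of \eqref{eq3}; the natural candidates are therefore built from the fundamental solution $e^{i\sqrt{z}|x|}/(4\pi|x|)$ of $-\Delta-z$. For $z\in\bC_+$ the chosen branch of $\sqrt{z}$ has positive imaginary part, so $e^{i\sqrt{z}r_j}$ decays exponentially and $f_z:=\sum_{j=1}^m\xi_j\frac{e^{i\sqrt{z}r_j}}{4\pi r_j}$ lies in $L^2(\bR^3,\bC^n)$. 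Since each summand solves $-\Delta u=zu$ off $x_j$, one has $H^*f_z=zf_z$; using the local expansion $\frac{e^{i\sqrt{z}r_j}}{4\pi r_j}=\frac{1}{4\pi r_j}+\frac{i\sqrt{z}}{4\pi}+O(r_j)$ near $x_j$ one checks $f_z\in\dom(H^*)$, and a dimension count ($\dim\kN_z=n_\pm(H)=mn$, matching the $mn$ independent functions obtained by tensoring $e^{i\sqrt{z}r_j}/r_j$ with the standard basis of $\bC^n$) shows these $f_z$ exhaust $\kN_z$. Note also $\bC_+\subset\rho(H_0)$ since $H_0$ is the free Laplacian, so \eqref{2.3A} applies.

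Next I would apply $\Gamma_0$ to $f_z$. In the limit $x\to x_j$ only the $j$-th summand survives in $f_z(x)|x-x_j|$ (the others are regular at $x_j$ and are killed by the factor $r_j\to0$), giving $\Gamma_{0j}f_z=4\pi\cdot\frac{\xi_j}{4\pi}=\xi_j$. Hence $\Gamma_0\!\upharpoonright\!\kN_z$ sends $f_z\mapsto\{\xi_j\}_{j=1}^m$, and its inverse $\gamma(z)$ is exactly the map $\{\xi_j\}\mapsto f_z$, which is \eqref{g3}; this proves $(ii)$.

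For $(i)$, since $M(z)=\Gamma_1\gamma(z)$, I would compute $\Gamma_1 f_z$. The singular coefficient at $x_j$ is $\xi_{0j}=\xi_j/(4\pi)$, read off from $\Gamma_0 f_z$, so $\Gamma_{1j}f_z=\lim_{x\to x_j}\bigl(f_z(x)-\frac{\xi_j}{4\pi r_j}\bigr)$. Splitting $f_z$ into its $j$-th term and the remainder, the diagonal contribution is $\frac{\xi_j}{4\pi}\lim_{r_j\to0}\frac{e^{i\sqrt{z}r_j}-1}{r_j}=\frac{i\sqrt{z}}{4\pi}\xi_j$, while for $k\neq j$ one simply evaluates $\frac{e^{i\sqrt{z}r_k}}{4\pi r_k}$ at $x_j$ to obtain $\widetilde{G}_{\sqrt{z}}(x_j-x_k)\xi_k$. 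Collecting terms gives precisely the matrix $\bigl(\frac{i\sqrt{z}}{4\pi}\delta_{jk}+\widetilde{G}_{\sqrt{z}}(x_j-x_k)\bigr)_{j,k=1}^m$ of \eqref{W3}, the convention $\widetilde{G}_{\sqrt{z}}(0)=0$ absorbing the diagonal. The block decomposition $M(z)=\bigoplus_{s=1}^n M_s(z)$ reflects that $-\Delta\otimes I_n$ acts componentwise, so the $n$ copies of $\bC$ decouple and each yields the scalar Weyl matrix $M_s(z)$.

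I expect the only delicate point to be the diagonal entry in $\Gamma_1$: one must subtract exactly the singular part $\frac{\xi_j}{4\pi r_j}$ and recognize that the residual $\frac{e^{i\sqrt{z}r_j}-1}{r_j}$ tends to $i\sqrt{z}$, which is what produces the characteristic $\frac{i\sqrt{z}}{4\pi}$ on the diagonal; the off-diagonal terms and the $\Gamma_0$ computation are routine bookkeeping with the limits in \eqref{g0}--\eqref{g1}. The formulas, established first for $z\in\bC_+$, extend to all of $\rho(H_0)$ by holomorphy of $\gamma(\cdot)$ and $M(\cdot)$.
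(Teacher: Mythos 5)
Your proposal is correct and follows essentially the same route as the paper: identify $\kN_z$ as the span of the functions $e^{i\sqrt{z}r_j}/(4\pi r_j)$ (the paper simply cites \cite[chapter II.1]{AGHH88} for this, where you re-derive it via decay and a dimension count), then apply $\Gamma_0$ and $\Gamma_1$ to $f_z$ and read off $\gamma(z)$ and $M(z)$ from Definition \ref{Weylfunc}. Your computations, including the diagonal limit $\lim_{r_j\to 0}(e^{i\sqrt{z}r_j}-1)/r_j=i\sqrt{z}$ producing the $\tfrac{i\sqrt{z}}{4\pi}\delta_{jk}$ term, coincide with the paper's formula \eqref{eq*}.
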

\begin{proof}
 Let  $f_z\in \kN_z,\ z\in\bC_+$. Then $f_z=
\sum\limits^m_{j=1}a_j\,\frac{e^{i\sqrt{z}r_j}}{4\pi r_j},\quad
a_j\in\bC^n$  (see  \cite[chapter II.1]{AGHH88}).

 Applying   $\Gamma_0$  and  $\Gamma_1$ to  $f_z$, we  get
 \begin{equation}\label{eq*}
\Gamma_0f_z=\{a_j\}^m_{j=1},\qquad
 \Gamma_1f_z=\left\{a_j\frac{i\sqrt{z}}{4\pi}+\sum\limits_{k\neq
j}a_k\frac{e^{i\sqrt{z}|x_j-x_k|}}{4\pi|x_j-x_k|}\right\}^m_{j=1}.
\end{equation}
Therefore  \eqref{W3}  is  proved  (see Definition
\ref{Weylfunc}).
 Finally, combining \eqref{eq*}  with \eqref{2.3A}, we  arrive at \eqref{g3}.
 \end{proof}

\begin{remark}
 $(i)$  The  first   construction  of the   boundary    triplet, in the  case  $m=n=1$,   apparently   goes back  to  the  paper   by  Lyantse  and  Majorga
   \cite[Theorem 2.1]{LyaMaj}.  They   also obtained   the  description
of   the spectrum  of an   arbitrary proper  extension
$H_\Theta$ of  $H$  \cite[Theorem 4.5]{LyaMaj}.   Their  description of $(H_\Theta-z)^{-1}$ coincides
with the Krein  formula  for canonical  resolvents in Theorem
\ref{pr1'}.
    Another construction  of the    boundary   triplet  in  the   situation   of  general   elliptic  operator  with the  boundary   conditions  on   the  set  of zero Lebesgue measure   was   obtained  in  \cite{Koc82}.  However   this  construction is  not  suitable  for our purpose.
    In the   case   $m=1$,  slightly  different   boundary triplet was obtained in  \cite[section 5.4]{bmn}.

    $(ii)$  Note  also  that  the  Weyl  function in the  form  \eqref{W3} appears   in  the  paper  by A. Posilicano \cite[Example 5.3]{Pos08} and  in the book \cite{AGHH88} (see  Theorem 1.1.1  in chapter  II.1)  without  connection with boundary  triplets.

    \end{remark}
 \subsection{Proper  extensions of the  minimal Schr\"{o}dinger  operator $H$}
Proposition  \ref{propo} gives a description  of all proper
extensions of  $H$ in  terms of  boundary  triplets. The following
theorem is its reformulation in  more precise  form.
\begin{theorem}\label{pr1'}
Let   $H$  be the minimal  Schr\"{o}dinger  operator \eqref{eq2},
 let  $\Pi=\{\kH,\Gamma_0,\Gamma_1\}$  be the boundary triplet
for  $H^*$ defined  by  \eqref{g0}-\eqref{g1}, and   $M(\cdot)$
  the corresponding Weyl  function. Assume that
$\xi_0,\xi_1, E_0, E_1$ are  defined  by
\eqref{hat} and    $H_{C,D}$  is  a proper extension of
$H$. 
Then  the following assertions hold.
\newline
$(i)$ The set of all  proper  extensions  $H_{C,D}$ of $H$ is
described as follows
\begin{equation}\label{concr}
\dom(H_{C,D})=\left\{f\in \dom(H^*):\,\, D(E_1\otimes
I_n)\xi_1=(4\pi C+D(E_0\otimes
I_n))\xi_0\right\}, \quad C,D\in[\cH].
\end{equation}
$(ii)$  Moreover,   $H_{C,D}$  is a   self-adjoint extension of
$H$ if and only if \eqref{s-acond} holds.
\newline
$(iii)$
 Friedrichs  extension   $H_F$  of  $H$ coincides
 with $H_0$:
\begin{equation*}
\dom(H_F)=\dom(H_0)=W^2_2(\bR^3,\bC^n).
\end{equation*}
\newline
$(iv)$ The  domain  of  Krein  extension   $H_K$ is
\begin{equation}\label{eqHk}
\dom(H_K)=\left\{ f=
\sum\limits^m_{j=1}\xi_{0j}\frac{e^{-r_j}}{r_j}+\sum\limits^m_{k,j=1}\textbf{k}_{jk}\xi_{0k}e^{-r_j}
+ f_H: \, \xi_{0j}\in\bC^n,\, f_H\in\dom(H) \right\},
\end{equation}
with
\begin{gather*}\label{eqHk'}
\textbf{K}=(\textbf{k}_{kj})_{k,j=1}^m=(E_1\otimes I_n)^{-1}(4\pi
M(0)+E_0\otimes I_n),\\
M(0)=I_n\otimes\left(\widetilde{G}_0(x_j-x_{k})\right)_{j,k=1}^m=I_n\otimes
\left(\frac{1-\delta_{jk}}{4\pi|x_k-x_j|+\delta_{jk}}\right)_{j,k=1}^m.\label{Weyl0}
\end{gather*}
$(v)$  Proper  extension   $H_{C,D}$ of the  form \eqref{concr} is
self-adjoint  and  nonnegative  if and  only  if \eqref{s-acond}
holds and
 \begin{equation*}\label{cond pos}
 ((CD^*-DM(0)D^*)h,h)\geq 0,\qquad
h\in\cH\setminus\{0\}.
 \end{equation*}
 $(vi)$ Krein  formula for  canonical  resolvents  takes
the form
 \begin{equation}\label{Kre3}
 R_z(H_{C,D}) = R_z(H_0) + \gga(z)\bigl(C
 -DM(z)\bigr)^{-1}D\gga(\overline{z})^*,\,\,
 z\in\rho(H_{C,D})\setminus\bR_+,
 \end{equation}
where $\gamma(\cdot)$-field is  defined  by \eqref{g3} and
$R_z(H_0)$ is an integral operator  with the kernel
$G_{\sqrt{z}}(x,x')=\frac{e^{i\sqrt{z}|x-x'|}}{4\pi|x-x'|}\otimes
I_n$.
\end{theorem}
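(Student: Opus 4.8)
The plan is to feed the explicit boundary-triplet data of Propositions \ref{pr1} and \ref{pr2} into the abstract machinery of Section \ref{prelim}, treating the six assertions as successive specializations. The common first step is to rewrite the boundary maps in matrix form: collecting the expressions in \eqref{tripl} and using the matrices $E_0,E_1$ of \eqref{hat}, one checks directly that for $f=\sum_j(\xi_{0j}e^{-r_j}/r_j+\xi_{1j}e^{-r_j})+f_H$ one has $\Gamma_0 f=4\pi\xi_0$ and $\Gamma_1 f=(E_1\otimes I_n)\xi_1-(E_0\otimes I_n)\xi_0$. Substituting these into the representation \eqref{ext repr}, namely $\dom(H_{C,D})=\dom(H^*)\!\upharpoonright\ker(D\Gamma_1-C\Gamma_0)$, turns the defining condition into $D(E_1\otimes I_n)\xi_1=(4\pi C+D(E_0\otimes I_n))\xi_0$, which is precisely \eqref{concr}; this proves $(i)$. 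Assertion $(ii)$ is then immediate from the Rofe-Beketov criterion quoted in the Remark after the Corollary, since $\dim\kH=mn<\infty$.

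For $(iii)$ and $(iv)$ I would analyze the two boundary values of the Weyl function \eqref{W3} on the negative axis. With the branch convention $\sqrt{1}=1$ one has $i\sqrt{x}=-\sqrt{|x|}$ for $x<0$, so the diagonal entries $i\sqrt{x}/4\pi$ tend to $-\infty$ as $x\downarrow-\infty$, while the off-diagonal entries $\widetilde{G}_{\sqrt{x}}(x_j-x_k)=e^{-\sqrt{|x|}|x_j-x_k|}/(4\pi|x_j-x_k|)$ tend to $0$; hence $(M(x)f,f)\to-\infty$ for every $f\neq0$, condition \eqref{cond un} holds, and Proposition \ref{prkf}$(iv)$ gives $H_F=H_0$, whose domain is $W^2_2(\bR^3,\bC^n)$ by Proposition \ref{pr1}$(iii)$. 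Letting instead $x\uparrow0$ makes the diagonal vanish and the off-diagonal entries converge to $\widetilde{G}_0(x_j-x_k)$, so $M(0)$ is the bounded matrix displayed in $(iv)$. Since $M(0)\in[\kH]\subset\cC(\kH)$, Proposition \ref{prkf}$(iii)$ yields $\dom(H_K)=\dom(H^*)\!\upharpoonright\ker(\Gamma_1-M(0)\Gamma_0)$; rewriting $\Gamma_1 f=M(0)\Gamma_0 f$ in matrix form gives $\xi_1=\textbf{K}\xi_0$ with $\textbf{K}=(E_1\otimes I_n)^{-1}(4\pi M(0)+E_0\otimes I_n)$, which is exactly \eqref{eqHk}.

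The core of the theorem is $(v)$, and this is the step I expect to be the main obstacle. Since $H_0=H_F$ by $(iii)$ and $M(0)\in[\kH]$, Proposition \ref{prkf}$(vi)$ reduces nonnegativity of the self-adjoint operator $H_{C,D}$ to the relation inequality $\Theta-M(0)\geq0$, where $\Theta=\{(h,h')^\top:\,Ch=Dh'\}$; the difficulty is converting this intrinsic inequality into the concrete form $CD^*-DM(0)D^*\geq0$. My plan is to parametrize the self-adjoint relation as $\Theta=\{(D^*g,C^*g):g\in\kH\}$: the inclusion $\supseteq$ follows from $CD^*=DC^*$, and equality follows by a dimension count, since the map $g\mapsto(D^*g,C^*g)$ has kernel $\ker C^*\cap\ker D^*=\ker(CC^*+DD^*)=\{0\}$ by the second condition in \eqref{s-acond}, so its range has the full dimension $mn=\dim\Theta$. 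Evaluating the form $(h'-M(0)h,h)$ on such an element and using $DC^*=CD^*$ collapses it to $((CD^*-DM(0)D^*)g,g)$, so $\Theta-M(0)\geq0$ is equivalent to $CD^*-DM(0)D^*\geq0$, which together with $(ii)$ is $(v)$.

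Finally, $(vi)$ is a direct transcription of the abstract Krein formula \eqref{krein} with the $\gamma$-field \eqref{g3} and the Weyl function \eqref{W3} just computed; the free resolvent $R_z(H_0)$ is the classical convolution operator with kernel $e^{i\sqrt{z}|x-x'|}/(4\pi|x-x'|)\otimes I_n$, and the restriction $z\in\rho(H_{C,D})\setminus\bR_+$ reflects $\sigma(H_0)=\bR_+$, i.e.\ $\rho(H_0)=\bC\setminus\bR_+$.
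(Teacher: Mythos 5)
Your proposal is correct and follows essentially the same route as the paper: assertions $(i)$, $(ii)$, $(vi)$ are read off from \eqref{ext repr}, \eqref{tripl}, the Rofe-Beketov conditions \eqref{s-acond} and the Krein formula \eqref{krein}, while $(iii)$--$(v)$ come from the boundary behaviour of the Weyl function \eqref{W3} combined with Proposition \ref{prkf}$(iii)$, $(iv)$, $(vi)$, exactly as in the paper's (much terser) proof. Your only addition is welcome detail the paper leaves implicit, notably the parametrization $\Theta=\{(D^*g,C^*g)^\top:\,g\in\kH\}$ with the kernel/dimension argument via $0\in\rho(CC^*+DD^*)$, which cleanly converts $\Theta-M(0)\geq 0$ into $CD^*-DM(0)D^*\geq 0$ in $(v)$.
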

\begin{proof}
$(i)$ and $(ii)$  follow from the representation \eqref{ext repr}
and  \eqref{tripl}.

 $(iii)$  It is easily  seen that   \eqref{W3}  implies   $s-R-\lim\limits_{x\downarrow-\infty}M(x)=-\infty I_{nm}$.
 Then, by  Proposition \ref{prkf} $(iv)$,  $H_F=H_0$. Finally, by  Proposition \ref{pr1}$(iii)$,
   $\dom(H_F)=\dom(H_0)=W^2_2(\bR^3,\bC^n)$ .

 $(iv)$ Note that strong  resolvent  limit
$s-R-\lim\limits_{x\uparrow0}M(x)=M(0)=I_n\otimes\left(\widetilde{G}_0(x_j-x_{k})\right)_{j,k=1}^m
=I_n\otimes\left(\frac{1-\delta_{jk}}{4\pi|x_k-x_j|+\delta_{jk}}\right)_{j,k=1}^m$
is  an operator. Therefore operators $H_0$ and  $H_K$   are
disjoint and, by Proposition \ref{prkf}$(iii)$, formula
\eqref{eqHk} is valid.

 $(v)$  follows from  Proposition  \ref{prkf}$(vi)$.

   Finally,  \eqref{krein} and  formula for the   kernel  of $(H_0 - z)^{-1}$ (see \cite[chapter I.1]{AGHH88})  yield  $(vi)$.
\end{proof}

In \cite{AGHH88}, it is  noted that, in  the case $n=1$, according
to  the extension theory, there are $m^2$-parametric  family of
self-adjoint  extensions of  the minimal operator $H$  defined by
\eqref{min}. However, in \cite{AGHH88}, only certain
$m$-parametric family $H_{\alpha,X}^{(3)}$ associated  with the
differential expression \eqref{eq0} is described  \cite[chapter
II.1, Theorem 1.1.3]{AGHH88}.  The    family $H_{\alpha,X}^{(3)}$
might  be parametrized
   in  the framework  of  boundary  triplet approach.
\begin{proposition}
 Let 
   $\Pi$ be  the  boundary triplet for $H^*$ defined  by \eqref{g0}-\eqref{g1}.
   Then the domain  of the Schr\"{o}dinger
 operator   $H_{\alpha,X}^{(3)}$  is
  \begin{equation}\label{diag}
\dom(H^{(3)}_{\alpha,X})=\dom(H^*)\upharpoonright\ker(\Gamma_1-B_\alpha\Gamma_0),
\quad B_\alpha=\diag(\alpha_1,.., \alpha_m),\,\,
\,\,\,\alpha_k\in\mathbb{R},\,\,\, k\in\{1,..,m\}.
 \end{equation}
 \end{proposition}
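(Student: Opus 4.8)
The plan is to identify $H_{\alpha,X}^{(3)}$ with the proper extension $H_{B_\alpha}:=H^*\!\upharpoonright\ker(\Gamma_1-B_\alpha\Gamma_0)$ cut out by the real diagonal matrix $B_\alpha=\diag(\alpha_1,\dots,\alpha_m)$ (acting as $\alpha_jI_n$ on the $j$-th block of $\kH=\oplus_{j=1}^m\bC^n$). First I would check that this boundary condition does produce a self-adjoint extension. Taking $C=B_\alpha$ and $D=I_{\kH}$ in \eqref{ext repr}, the self-adjointness conditions \eqref{s-acond} read $B_\alpha=B_\alpha^*$ and $0\in\rho(B_\alpha^2+I)$, both of which hold since the $\alpha_j$ are real and $B_\alpha^2+I\geq I>0$. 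Equivalently, the associated relation $\Theta=\{(h,B_\alpha h)^\top:h\in\kH\}$ is the graph of a bounded self-adjoint operator, so Proposition \ref{propo} yields a self-adjoint $H_{B_\alpha}$ which is moreover transversal to $H_0$, legitimizing the form $\Gamma_1-B_\alpha\Gamma_0$.

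Since both $H_{B_\alpha}$ and $H_{\alpha,X}^{(3)}$ are self-adjoint extensions of $H$, and a self-adjoint operator is determined by its resolvent at a single point of the resolvent set, it suffices to show that their resolvents coincide. For $H_{B_\alpha}$ I would invoke the Krein-type formula \eqref{Kre3} with $C=B_\alpha$, $D=I_{\kH}$, which gives
\[
R_z(H_{B_\alpha}) = R_z(H_0) + \gamma(z)\bigl(B_\alpha-M(z)\bigr)^{-1}\gamma(\overline z)^*,\qquad z\in\bC_+ .
\]
On the other side stands the resolvent of $H_{\alpha,X}^{(3)}$ as recorded in \cite[chapter II.1, Theorem 1.1.3]{AGHH88}: it has exactly the same shape, namely the free resolvent $R_z(H_0)$ (whose kernel is $G_{\sqrt z}(x,x')=\tfrac{e^{i\sqrt z|x-x'|}}{4\pi|x-x'|}\otimes I_n$, cf.\ Theorem \ref{pr1'}$(vi)$) plus a finite-rank correction assembled from the Green functions $G_{\sqrt z}(\cdot-x_j)$ and the inverse of the AGHH matrix $\Gamma_{\alpha,X}(\sqrt z)$.

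The heart of the argument is then the term-by-term matching of these two corrections. By \eqref{g3} the $\gamma$-field sends the $j$-th coordinate vector to $\tfrac{e^{i\sqrt z\,r_j}}{4\pi r_j}=G_{\sqrt z}(\cdot-x_j)$, so $\gamma(z)$ and $\gamma(\overline z)^*$ reproduce precisely the Green functions and their associated functionals occurring in AGHH. It remains to identify the two matrices being inverted: comparing \eqref{W3} with
\[
\bigl[\Gamma_{\alpha,X}(\sqrt z)\bigr]_{jk}=\Bigl(\alpha_j-\tfrac{i\sqrt z}{4\pi}\Bigr)\delta_{jk}-\widetilde G_{\sqrt z}(x_j-x_k),
\]
one reads off $\Gamma_{\alpha,X}(\sqrt z)=B_\alpha-M(z)$, the intensities $\alpha_j$ filling the diagonal of $B_\alpha$ while the terms $\tfrac{i\sqrt z}{4\pi}\delta_{jk}+\widetilde G_{\sqrt z}(x_j-x_k)$ constitute exactly $M(z)$. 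Hence the two resolvents coincide on $\bC_+$, so $H_{B_\alpha}=H_{\alpha,X}^{(3)}$, which is assertion \eqref{diag}.

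I expect the only genuinely delicate point to be reconciling the conventions of the two sources: AGHH parametrize by the momentum $k$ with $\im k>0$ and energy $z=k^2$, whereas here $\sqrt{\,\cdot\,}$ is the branch fixed by $\sqrt 1=1$ on $\bC\setminus\bR_+$. One must confirm that $\im\sqrt z>0$ for $z\in\bC_+$, so that $k=\sqrt z$ is AGHH's admissible momentum, and that the signs together with the $4\pi$ normalizations in the diagonal entries match (AGHH's $\alpha_j-\tfrac{ik}{4\pi}$ against our $\alpha_j-\tfrac{i\sqrt z}{4\pi}$). Once these are aligned the identity $\Gamma_{\alpha,X}(\sqrt z)=B_\alpha-M(z)$ holds on the nose — as already anticipated by the coincidence of the Weyl function with the resolvent matrix noted in the Introduction — and the remaining verifications are routine.
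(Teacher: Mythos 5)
Your proposal is correct and takes essentially the same route as the paper: the paper justifies \eqref{diag} precisely by remarking that the AGHH resolvent description of $H^{(3)}_{\alpha,X}$ coincides with the Krein formula \eqref{Kre3} with $C=B_\alpha=\diag(\alpha_1,..,\alpha_m)$ and $D=I_m$, which is exactly your identification $\Gamma_{\alpha,X}(\sqrt z)=B_\alpha-M(z)$ together with the matching of the $\gamma$-field \eqref{g3} against the Green functions. Your preliminary self-adjointness check via \eqref{s-acond} and the branch-of-$\sqrt{\cdot}$ bookkeeping are routine details the paper leaves implicit.
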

Note also  that the  description of the  $H_{\alpha,X}^{(3)}$
   in  terms of the resolvents  \cite[chapter II.1]{AGHH88} coincides  with the  Krein
 formula for canonical  resolvents \eqref{Kre3} with $C=B_\alpha=\diag(\alpha_1,.., \alpha_m)$ and $D=I_m$.

\begin{remark}
In  the  case $n=m=1$, formulas \eqref{concr} and \eqref{eqHk} are
essentially  simplified. Namely,
\begin{equation*}
\dom( H_{C,D})=\left\{
f=\xi_{0}\frac{e^{-r_1}}{r_1}+\xi_{1}e^{-r_1} + f_H:\,
d\xi_1=(4\pi c+d)\xi_0,\,\,\xi_{0},\xi_{1}, c, d\in \bC,\quad
f_H\in\dom(H) \right\}
\end{equation*}
and
\begin{equation*}
\dom( H_K)=\left\{ f=\xi_{0}\frac{e^{-r_1}}{r_1}+\xi_{0}e^{-r_1} +
f_H,\,\,\xi_0\in \bC,\quad f_H\in\dom(H) \right\}.
\end{equation*}
 \end{remark}
 \begin{remark}
The  matrix  Schr\"odinder operator   with
 finite  number  of point  interactions  was  also   studied  by
 A. Posilicano \cite[Example 5.3, Example 5.4]{Pos08}.   Particularly, the
 author parametrized   self-adjoint  extensions of  the minimal  symmetric operator $H$.
 A connection  between  our  description  of  self-adjoint  extensions and  the  one
obtained  by  A. Posilicano  might  be  established  by   the
formulas
  (4.5) and  (4.6) in \cite[Theorem 4.5]{Pos08}.
 \end{remark}
\begin{remark}
In \cite{ArlTse05},  Yu.Arlinskii  and  E.Tsekanovskii described
all nonnegative self-adjoint extensions $\widetilde{H}$ of $H$  in
the
case $n=1$ (see \cite[Theorem 5.1]{ArlTse05}). 
It  should  be noted  that  the   description  of all  nonnegative
self-adjoint extensions of $H$ close to that   contained in
\cite{ArlTse05} might be  obtained  in the framework of our scheme.
It will be published elsewhere.
\end{remark}

\subsection{Spectrum  of the  self-adjoint  extensions of  the  minimal Schr\"{o}dinger  operator  and scattering  matrix }

In  this subsection  we describe point  spectrum of the self-adjoint
extensions of $H$  and   complete  some  results  from
\cite{AGHH88} in this  direction.
\begin{theorem}\label{spec3}
Let  $H$ be  the minimal  Schr\"{o}dinger  operator \eqref{eq2},
let  $\Pi$ be  the boundary  triplet  for  $H^*$ defined by
\eqref{g0}-\eqref{g1}, and  $M(\cdot)$    the corresponding
Weyl function defined by  \eqref{W3}. Assume that $H_\Theta$  is a
self-adjoint extension of $H$.  Then the following assertions
hold.
\newline
  $(i)$  Point spectrum of the self-adjoint extension  $H_\Theta$ of $H$ consists  of at  most   $nm$
 negative  eigenvalues (counting multiplicities). Moreover,
$z\in\sigma_p(H_\Theta)\cap\bR_-\,$ if and only if
$0\in\sigma_p(\Theta-M(z))$, i.e., the following equivalence holds
\begin{equation*}\label{p_spec}
z\in\sigma_p(H_\Theta)\cap\bR_- \Leftrightarrow\,0\in\sigma_p(C-
DM(z)).
\end{equation*}
The corresponding   eigenfunction $\psi_z$ has the  form
\begin{equation*}\label{eigen}
\psi_z=\sum\limits_{j=1}^mc_j\frac{e^{i\sqrt{z}r_j}}{4\pi  r_j},
\end{equation*}
where $(c_1,.., c_m)^\top$ is eigenvector  of  the   relation
$\Theta-M(z)$ corresponding  to zero eigenvalue.
\newline
 $(ii)$ The number of  negative eigenvalues of
 the  self-adjoint  extension $H_\Theta$ is equal to the number  of
 negative eigenvalues of  the  relation
$\Theta-M(0)$, $\kappa_{-}(H_\Theta)= \kappa_{-}(\Theta-M(0))$,
i.e., 
\begin{equation*} \kappa_{-}(H_{C,D})=\kappa_{-}(CD^*- D
M(0)D^*), \end{equation*}
 where $M(0)$ is  defined   by
\eqref{Weyl0}.
\end{theorem}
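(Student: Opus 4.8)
The plan is to read both assertions off the abstract spectral correspondence of Proposition~\ref{prop_II.1.4_spectrum}, fed by the explicit Weyl function and $\gamma$-field of Proposition~\ref{pr2}, and then to convert the abstract negative-index formula of Proposition~\ref{prkf}$(v)$ into the matrix form claimed in $(ii)$.

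For $(i)$, I first record that $H_0=H^*\!\upharpoonright\ker(\Gamma_0)$ is the free operator $-\Delta\otimes I_n$ on $W_2^2(\bR^3,\bC^n)$ (Proposition~\ref{pr1}$(iii)$), so $\sigma(H_0)=[0,\infty)$ and hence $\bR_-\subset\rho(H_0)$. Applying Proposition~\ref{prop_II.1.4_spectrum} with $i=p$ at an arbitrary $z\in\bR_-$ gives $z\in\sigma_p(H_\Theta)\cap\bR_-\Leftrightarrow 0\in\sigma_p(\Theta-M(z))$, and writing $\Theta$ through $C,D$ as in \eqref{rel} turns the right-hand side into $0\in\sigma_p(C-DM(z))$. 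To identify the eigenfunction I use that any $\psi_z\in\ker(H_\Theta-z)$ also lies in $\kN_z=\ker(H^*-z)$, so by the description of $\kN_z$ in Proposition~\ref{pr2} it has the form $\psi_z=\gga(z)\overline{c}=\sum_{j=1}^m c_j\,e^{i\sqrt{z}\,r_j}/(4\pi r_j)$ with $\overline{c}=\{c_j\}_{j=1}^m$ (note that for $z\in\bR_-$ the branch convention gives $\sqrt{z}=i\sqrt{|z|}$, so $\psi_z$ decays and is genuinely in $L^2$). By \eqref{eq*} and \eqref{2.3A} one has $\Gamma_0\psi_z=\overline{c}$ and $\Gamma_1\psi_z=M(z)\overline{c}$, whence the membership $(\Gamma_0\psi_z,\Gamma_1\psi_z)^\top\in\Theta$ is precisely the statement that $\overline{c}\in\ker(\Theta-M(z))$, i.e.\ $\overline{c}$ is a zero eigenvector of $\Theta-M(z)$.

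Since the resolvent difference in \eqref{Kre3} has rank at most $nm$, $H_\Theta$ is a finite-rank-in-resolvent perturbation of $H_0$, so $\sigma_{\mathrm{ess}}(H_\Theta)=\sigma_{\mathrm{ess}}(H_0)=[0,\infty)$; the negative spectrum is therefore discrete with eigenvalues of finite multiplicity, and the only eigenvalues lie in $\bR_-$. The bound of at most $nm$ such eigenvalues (with multiplicity) I will take from $(ii)$, since $\kappa_-(H_\Theta)=\kappa_-(\Theta-M(0))\le\dim\cH=nm$. For $(ii)$ the crucial observation is that the present triplet meets the hypotheses of Proposition~\ref{prkf}$(v)$: by Theorem~\ref{pr1'}$(iii)$ we have $H_0=H_F$, and by Theorem~\ref{pr1'}$(iv)$ the strong resolvent limit $M(0)$ is a bounded operator given by \eqref{Weyl0}, i.e.\ $M(0)\in[\cH]$. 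Hence the ``Moreover'' clause of Proposition~\ref{prkf}$(v)$ applies and yields at once $\kappa_-(H_\Theta)=\kappa_-(\Theta-M(0))$. Using \eqref{rel}, the self-adjoint relation $\Theta-M(0)$ is represented by the pair $(C-DM(0),\,D)$, and $CD^*=DC^*$ together with $M(0)=M(0)^*$ shows that this pair again satisfies \eqref{s-acond}, with attached Hermitian matrix $(C-DM(0))D^*=CD^*-DM(0)D^*$.

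The main obstacle is the final identity $\kappa_-(\Theta-M(0))=\kappa_-\bigl(CD^*-DM(0)D^*\bigr)$, that is, that the negative index of a self-adjoint relation coincides with the negative index of the Hermitian matrix of its representation. When $H_\Theta$ and $H_0$ are disjoint one may choose $D$ invertible; then $\Theta-M(0)$ is the graph of the self-adjoint operator $B=D^{-1}C-M(0)$, one computes $CD^*-DM(0)D^*=DBD^*$, and Sylvester's law of inertia gives $\kappa_-(B)=\kappa_-(DBD^*)$, which is the claim. In the general case $D$ need not be invertible, so I must separate the multivalued part $\mul(\Theta-M(0))=\ker D$: restricting to the operator part on $\overline{\dom(\Theta-M(0))}=(\ker D)^\perp$, I will verify that the pure-relation part contributes neither to $\kappa_-(H_\Theta)$ nor to the negative index of $CD^*-DM(0)D^*$ (which annihilates $\ker D^*$), and then reduce to the invertible situation on the operator part, again via inertia. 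Combining the two cases produces $\kappa_-(H_{C,D})=\kappa_-\bigl(CD^*-DM(0)D^*\bigr)$, finishing $(ii)$ and the count in $(i)$.
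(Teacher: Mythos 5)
Your proposal is correct in its essentials and follows exactly the route the paper intends: the paper states Theorem~\ref{spec3} without a separate proof, treating assertion $(i)$ as immediate from Proposition~\ref{prop_II.1.4_spectrum} (legitimately applied on $\bR_-\subset\rho(H_0)$, since $H_0$ is the free Laplacian by Proposition~\ref{pr1}$(iii)$) together with the explicit description of $\kN_z$ and the $\gamma$-field from Proposition~\ref{pr2}, and assertion $(ii)$ as the ``Moreover'' clause of Proposition~\ref{prkf}$(v)$, whose hypotheses $H_0=H_F$ and $M(0)\in[\cH]$ you correctly verify via Theorem~\ref{pr1'}$(iii)$--$(iv)$. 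Your inertia argument for $\kappa_-(\Theta-M(0))=\kappa_-(CD^*-DM(0)D^*)$ --- splitting off $\mul(\Theta-M(0))=\ker D$, noting that $(C-DM(0))D^*$ annihilates $\ker D^*$, and invoking Sylvester's law on the operator part --- is the standard (and correct) way to pass to the $(C,D)$-parametrization; the reduction you defer can be made explicit by observing that for $h=D^*x\in\dom(\Theta-M(0))$ with $x\in\ran D$ the quadratic form of the operator part equals $\bigl((CD^*-DM(0)D^*)x,x\bigr)$, so the two forms are congruent under the bijection $x\mapsto D^*x$.

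One genuine flaw: your parenthetical claim that ``the only eigenvalues lie in $\bR_-$'' does not follow from $\sigma_{\mathrm{ess}}(H_\Theta)=[0,\infty)$, and it is in fact false at the threshold. For $m\geq 2$ the functions $\psi=\sum_{j=1}^m c_j(4\pi r_j)^{-1}$ with $\sum_j c_j=0$ belong to $L^2(\bR^3)\cap\dom(H^*)$ and satisfy $H^*\psi=0$ with $(\Gamma_0\psi,\Gamma_1\psi)^\top=(c,M(0)c)^\top$, where $M(0)$ is the matrix \eqref{Weyl0}; hence $0\in\sigma_p(H_\Theta)$ for every self-adjoint $\Theta$ whose graph contains such a pair --- in particular for the Krein extension $H_K$ ($\Theta=M(0)$), where the eigenspace has dimension $(m-1)n$. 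Proposition~\ref{prop_II.1.4_spectrum} is unavailable at $z=0\notin\rho(H_0)$, so it cannot exclude this. This does not affect the two substantive claims --- the equivalence $z\in\sigma_p(H_\Theta)\cap\bR_-\Leftrightarrow 0\in\sigma_p(C-DM(z))$ with the stated eigenfunctions, and the count $\kappa_-(H_{C,D})=\kappa_-(CD^*-DM(0)D^*)\leq nm$ --- and the paper's own phrasing of assertion $(i)$ shares the same imprecision, but you should either restrict your remark to negative eigenvalues or justify separately the absence of eigenvalues in $(0,\infty)$ (e.g., by the non-$L^2$ asymptotics of $\sum_j c_j e^{i\sqrt{z}r_j}/r_j$ for $z>0$).
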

Next   we  find   sufficient
 conditions  for the inequality $\kappa_-(H_{\alpha,X}^{(3)})\geq m'$ (with  $m'\leq m$)
 as well  as  for the equality $\kappa_-(H_{\alpha,X}^{(3)}) = m'$ to hold  by  applying  the  following  Gerschgorin theorem.
\begin{theorem}\cite[Theorem 7.2.1]{Lan}
All eigenvalues of a matrix
$A=(a_{ij})_{i,j=1}^m\in[\mathbb{C}^m]$ are contained in the union
of Gerschgorin's disks
 \begin{equation*}\label{gercond}
G_k=\{z\in\mathbb{C}:\,|z-a_{kk}|\leq\sum\limits_{k\neq
j}|a_{kj}|\},\qquad k\in\{1,..,m\}.
\end{equation*}
Moreover, the set consisting of  $m'$ disks that do not intersect
with remaining $m-m'$ disks contains precisely $m'$ eigenvalues of
the matrix $A$.
\end{theorem}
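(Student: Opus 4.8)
The plan is to establish the two assertions separately, both by elementary means. For the localization claim I would begin from an eigenpair: let $\lambda$ be an eigenvalue of $A$ with eigenvector $x=(x_1,\dots,x_m)^\top\neq 0$, and choose an index $k$ with $|x_k|=\max_{1\le j\le m}|x_j|>0$. Reading off the $k$-th component of the identity $Ax=\lambda x$ and moving the diagonal term to the left gives $(\lambda-a_{kk})x_k=\sum_{j\neq k}a_{kj}x_j$. Dividing by $x_k\neq 0$ and using $|x_j|/|x_k|\le 1$ for every $j$ yields $|\lambda-a_{kk}|\le\sum_{j\neq k}|a_{kj}|$, that is, $\lambda\in G_k$. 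Hence every eigenvalue lies in at least one Gerschgorin disk, which is the first statement.

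For the counting (\emph{Moreover}) part I would use a deformation argument. Write $D=\diag(a_{11},\dots,a_{mm})$ and set $A_t:=D+t(A-D)$ for $t\in[0,1]$, so that $A_0=D$ and $A_1=A$. The $k$-th Gerschgorin disk of $A_t$ has the same center $a_{kk}$ and radius $t\sum_{j\neq k}|a_{kj}|$, hence is contained in $G_k$. Now suppose the union $U$ of some $m'$ disks is disjoint from the union $V$ of the remaining $m-m'$ disks. The eigenvalues of $A_t$ are the roots of $\det(A_t-zI)$, whose coefficients are polynomials in $t$, so they depend continuously on $t$. Applying the first part to each $A_t$, every eigenvalue of $A_t$ stays inside $U\cup V$; since $U$ and $V$ are disjoint compact sets, $\mathrm{dist}(U,V)=\delta>0$, and no eigenvalue can pass from $U$ to $V$ as $t$ varies. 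Consequently the number of eigenvalues (with multiplicity) lying in $U$ is constant in $t$. At $t=0$ the eigenvalues are exactly the centers $a_{11},\dots,a_{mm}$, of which precisely $m'$ lie in $U$ by hypothesis; therefore $A=A_1$ likewise has exactly $m'$ eigenvalues in $U$.

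The main obstacle is making the constancy-of-the-count step fully rigorous, in particular controlling multiplicities. I would make this precise via the argument principle: fix a contour $\Gamma$ lying in the gap region between $U$ and $V$ (at distance $\delta/2$ from $U$, say), so that $\Gamma$ encircles all of $U$ and contains no eigenvalue of any $A_t$; then the number of eigenvalues of $A_t$ enclosed is $\tfrac{1}{2\pi i}\oint_{\Gamma}\tfrac{d}{dz}\log\det(A_t-zI)\,dz$, which is integer-valued and continuous in $t$, hence constant. The remaining verifications --- that the scaled disks of $A_t$ stay inside the disks $G_k$ of $A$ and that the centers distribute among $U$ and $V$ as asserted --- are immediate.
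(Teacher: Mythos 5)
Your proof is correct and complete: the maximum-component argument for the localization claim, followed by the homotopy $A_t=D+t(A-D)$ with the eigenvalue count kept constant via the argument principle, is precisely the classical proof of Gerschgorin's theorem, and you handle the two delicate points properly (the disks of $A_t$ shrink into the $G_k$, and the gap $\delta>0$ between the disjoint compact unions $U$ and $V$ keeps the contour eigenvalue-free for every $t$, so the integer-valued count with multiplicities cannot jump). Note that the paper itself gives no proof --- it imports the statement verbatim from \cite[Theorem 7.2.1]{Lan} as a tool for estimating $\kappa_-(H_{\alpha,X}^{(3)})$ --- so there is no internal argument to compare against; yours coincides with the standard textbook treatment found in the cited source.
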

\begin{proposition}
 Let  $H_{\alpha,X}^{(3)}$  be defined  by \eqref{diag}.  Let
 also $K=\{k_i\}_{i=1}^{m'}$ be   a subset  of \,\, $\mathbb{N}$.

 $(i)$   Suppose  that
 \begin{equation}\label{suff}
 \alpha_{k_i}<-\sum\limits_{j\neq
k_i}\frac 1{4\pi|x_j-x_{k_i}|}\quad \text{ for }\quad k_i\in K.
 \end{equation}
 Then  $\kappa_-(H_{\alpha,X}^{(3)})\geq m'$.

 $(ii)$  If, in addition, $\alpha_k\geq \sum\limits_{j\neq
k}\frac 1{4\pi|x_j-x_k|}$ for  $k\notin K$, then
$\kappa_-(H_{\alpha,X}^{(3)})=m'$.
 \end{proposition}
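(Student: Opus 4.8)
The plan is to convert the statement into a claim about the inertia of one explicit $m\times m$ matrix and then extract the sign information from Gerschgorin localization. By the remark following \eqref{diag}, the extension $H_{\alpha,X}^{(3)}$ corresponds to $C=B_\alpha=\diag(\alpha_1,\dots,\alpha_m)$ and $D=I_m$ in the notation of \eqref{concr}, so (working in the scalar case $n=1$, which is the one in which the count is $m'$) Theorem \ref{spec3}$(ii)$ gives
\begin{equation*}
\kappa_-(H_{\alpha,X}^{(3)})=\kappa_-(CD^*-DM(0)D^*)=\kappa_-(B_\alpha-M(0))=:\kappa_-(A).
\end{equation*}
By the formula for $M(0)$ in Theorem \ref{pr1'}$(iv)$, the matrix $M(0)$ has zero diagonal and off-diagonal entries $\tfrac{1}{4\pi|x_k-x_j|}$, so $A=B_\alpha-M(0)$ is real symmetric with diagonal entries $a_{kk}=\alpha_k$ and off-diagonal entries $a_{kj}=-\tfrac{1}{4\pi|x_k-x_j|}$. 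Writing $r_k:=\sum_{j\neq k}\tfrac{1}{4\pi|x_j-x_k|}$ for the radius of the Gerschgorin disk $G_k=\{z:|z-\alpha_k|\le r_k\}$, hypothesis \eqref{suff} says exactly that $\alpha_{k_i}+r_{k_i}<0$, i.e. each disk $G_{k_i}$ with $k_i\in K$ lies in the open left half-plane $\{\operatorname{Re} z<0\}$.

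For part $(i)$ I would pass to the $m'\times m'$ principal submatrix $A_K:=(a_{kj})_{k,j\in K}$. Its Gerschgorin radii are $r_{k_i}^K:=\sum_{j\in K,\,j\neq k_i}\tfrac{1}{4\pi|x_j-x_{k_i}|}\le r_{k_i}$, so its disks still satisfy $\alpha_{k_i}+r_{k_i}^K<0$ and lie in $\{\operatorname{Re} z<0\}$. Since $A_K$ is real symmetric its eigenvalues are real, hence all negative, so $A_K$ is negative definite. The quadratic form $\langle Ax,x\rangle$ restricted to the $m'$-dimensional subspace $\{x:\,x_k=0\text{ for }k\notin K\}$ equals $\langle A_Kx_K,x_K\rangle<0$ for $x\neq 0$; by the min-max principle (equivalently, Cauchy interlacing for principal submatrices) this forces $\kappa_-(A)\ge\kappa_-(A_K)=m'$, which is the assertion $\kappa_-(H_{\alpha,X}^{(3)})\ge m'$.

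For part $(ii)$ the additional hypothesis $\alpha_k\ge r_k$ for $k\notin K$ means the leftmost point $\alpha_k-r_k$ of each remaining disk is $\ge 0$, so $G_k\subset\{\operatorname{Re} z\ge 0\}$ for $k\notin K$. Combined with the inclusion $G_{k_i}\subset\{\operatorname{Re} z<0\}$ for $k_i\in K$ obtained above, the $m'$ disks indexed by $K$ are disjoint from the remaining $m-m'$ disks. The refined part of the cited Gerschgorin theorem then guarantees that $\bigcup_{k_i\in K}G_{k_i}$ contains precisely $m'$ eigenvalues of $A$ and $\bigcup_{k\notin K}G_k$ the other $m-m'$. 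As $A$ has only real eigenvalues, the former $m'$ are negative and the latter $m-m'$ are nonnegative, whence $\kappa_-(A)=m'$, i.e. $\kappa_-(H_{\alpha,X}^{(3)})=m'$.

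The only genuine subtlety is the counting in part $(i)$: Gerschgorin applied to the full matrix $A$ localizes $\sigma(A)$ in $\bigcup_k G_k$, but, absent a disjointness hypothesis, it does not by itself say how many eigenvalues lie in the subfamily $\{G_{k_i}\}_{k_i\in K}$. This is precisely why I route part $(i)$ through the submatrix $A_K$, the monotonicity $r_{k_i}^K\le r_{k_i}$ of the Gerschgorin radii being what makes the restriction argument go through; in part $(ii)$ the extra hypothesis supplies the disjointness needed to invoke the refined Gerschgorin statement directly.
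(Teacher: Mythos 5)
Your proposal is correct and follows essentially the same route as the paper: both reduce via Theorem \ref{spec3}$(ii)$ to the inertia of $B_\alpha-M(0)$, prove $(i)$ by applying Gerschgorin to the principal submatrix indexed by $K$ (your $A_K$ is the paper's $B_{m'}$) together with the minimax/interlacing inequality $\kappa_-(A)\geq\kappa_-(A_K)$, and prove $(ii)$ by the second (counting) part of the Gerschgorin theorem applied to the full matrix. Your write-up merely makes explicit two points the paper leaves implicit --- the disjointness of the two families of disks and the observation that the $m-m'$ eigenvalues in $\{\operatorname{Re}z\geq 0\}$ are nonnegative because the matrix is real symmetric.
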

 \begin{proof}

 $(i)$
 Combining   Theorem  \ref{spec3}$(ii)$     with  \eqref{Weyl0}, we   get
 \begin{equation*}
 \kappa_-(H_{\alpha,X}^{(3)})=\kappa_-(B_\alpha-M(0))=\kappa_-\left(\left(\alpha_k\delta_{jk}-\frac{1-\delta_{jk}}{4\pi|x_j-x_k|+\delta_{jk}}\right)_{j,k=1}^m\right).
 \end{equation*}
 Without   loss of
generality   we  may assume that  $K=\{1,..,m'\}$. Denote   by
$B_{m'}$ the upper left $m'\times m'$   corner   of the matrix
$B_\alpha-M(0)$. According to the  minimax  principle,
\begin{equation}\label{minimax}
 \kappa_-(H_{\alpha,X}^{(3)})=\kappa_-(B_\alpha-M(0))\geq\kappa_-(B_{m'}).
\end{equation}
Conditions  \eqref{suff} yield the   corresponding  Gerschgorin
conditions  for $B_{m'}$. Applying the Gerschgorin theorem to the
matrix $B_{m'}$ and using \eqref{suff}, we get
$\kappa_-(B_{m'})=m'$. Combining  the latter  equation  with
\eqref{minimax}, we  get $\kappa_-(H_{\alpha,X}^{(3)})\geq m'$.

$(ii)$ Applying the second part of the Gerschgorin theorem to the
matrix $B-M(0)$, we arrive  at
$\kappa_-(H_{\alpha,X}^{(3)})=\kappa_-(B_\alpha-M(0))=m'$.
\end{proof}
\begin{remark}
 Note  that the  idea of  applying Gerschgorin's theorem is  borrowed  from \cite{Ogu08}. This  idea was also  used  in  \cite{GolOrid09}.
   \end{remark}

 Consider  the scattering  system
$\{H_\gT,H_0\}$, where
$H_\Theta=H^*\upharpoonright\Gamma^{-1}\Theta$ with  arbitrary
self-adjoint  relation  $\Theta\in\widetilde\cC(\kH)$. Since  $H$
is not simple, we consider the system
$\{\widehat{H}_\gT,\widehat{H}_0\}$, $H_\Theta=\widehat
H_\Theta\oplus H_s$. Then Theorem  ~\ref{trep} and  \eqref{W3}
imply
\begin{theorem}
Scattering matrix  $\{\widehat{S}_\gT(z)\}_{z \in \bR_+}$ of the
scattering system $\{\widehat{H}_\gT,\widehat{H}_0\}$ has the
 form

\begin{gather*} \widehat{S}_\gT(x) = I_{nm} +
2i\sqrt{S(x)}\bigl(\gT -
I_{n}\otimes\left(\tfrac{i\sqrt{x}}{4\pi}\delta_{jk}+
\widetilde{G}_{\sqrt{x}}(x_j-x_{k})\right)_{j,k=1}^m\bigr)^{-1}\sqrt{S(x)},\quad x \in \bR_+,\\
S(x)=I_{n}\otimes\left(\tfrac{\sqrt{x}}{4\pi}\delta_{jk}+
\widetilde{S}_{\sqrt{x}}(x_j-x_{k})\right)_{j,k=1}^m,\quad\widetilde{S}_{\sqrt{x}}(t)=\left\{%
  \begin{array}{ll}
    \frac{\sin(\sqrt{x}|t|)}{4\pi|t|}, & \hbox{t$\neq$0;} \\
    0, & \hbox{t=0.} \\
\end{array}%
\right.
\end{gather*}
\end{theorem}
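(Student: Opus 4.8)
The plan is to derive the scattering matrix directly from Theorem~\ref{trep} by substituting the Weyl function $M(\cdot)$ computed in Proposition~\ref{pr2}. The starting point is the abstract formula
\[
\widehat{S}_\gT(z) = I_{\kH_z} + 2i\sqrt{\imm(M(z))}\bigl(\Theta-M(z)\bigr)^{-1}\sqrt{\imm(M(z))},\qquad z\in\gL^M,
\]
which applies to the simple part $\widehat{H}$ of $H$ (recall $H=\widehat{H}\oplus H_s$ is not itself simple, so one passes to the scattering system $\{\widehat{H}_\gT,\widehat{H}_0\}$; this reduction is the reason for the hats and should be noted explicitly). Since for the three-dimensional operator $A_0=H_0$ is self-adjoint with $\dom(H_0)=W_2^2(\bR^3,\bC^n)$ and $\sigma(H_0)=\bR_+$ is purely absolutely continuous, the spectral support $\gL^M$ is exactly $\bR_+$, which explains the index set $z\in\bR_+$ in the statement.

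The computational heart is to evaluate $\imm(M(z+i0))$ for $z=x\in\bR_+$. First I would take the boundary value $x+i0$ in the formula \eqref{W3} for $M(z)$. With the branch convention $\sqrt{1}=1$ on $\bC\setminus\bR_+$, for $x>0$ one has $\sqrt{x+i0}=\sqrt{x}$, so the diagonal term $\tfrac{i\sqrt z}{4\pi}\delta_{jk}$ contributes $\tfrac{\sqrt x}{4\pi}\delta_{jk}$ to the imaginary part, while the off-diagonal Green-function entries $\widetilde G_{\sqrt z}(x_j-x_k)=\dfrac{e^{i\sqrt z|x_j-x_k|}}{4\pi|x_j-x_k|}$ have imaginary part $\dfrac{\sin(\sqrt x\,|x_j-x_k|)}{4\pi|x_j-x_k|}$. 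Collecting these entries gives precisely the matrix $S(x)=I_n\otimes\bigl(\tfrac{\sqrt x}{4\pi}\delta_{jk}+\widetilde S_{\sqrt x}(x_j-x_k)\bigr)_{j,k=1}^m$ with $\widetilde S$ as defined in the statement, so that $\imm(M(x+i0))=S(x)$. Substituting $\sqrt{\imm(M(x))}=\sqrt{S(x)}$ and keeping $M(x)=I_n\otimes\bigl(\tfrac{i\sqrt x}{4\pi}\delta_{jk}+\widetilde G_{\sqrt x}(x_j-x_k)\bigr)_{j,k=1}^m$ in the factor $(\Theta-M(x))^{-1}$ yields the displayed formula verbatim, with $I_{\kH_x}$ replaced by $I_{nm}$ once one checks that $S(x)$ is invertible (hence $\kH_x=\ran(\imm M(x+i0))$ is all of $\kH=\bigoplus_{j=1}^m\bC^n$) for a.e.\ $x\in\bR_+$.

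The main obstacle is justifying the existence and the explicit form of the nontangential boundary limit $M(x+i0)$ together with the rank condition $\kH_x=\kH$. Away from the discrete set of $x$ where the Gram-type matrix $S(x)$ degenerates, $M(\cdot)$ extends continuously from $\bC_+$ up to $\bR_+$ because each matrix entry is an entire function of $\sqrt z$, so the limit $x+i0$ is immediate and equals the entrywise boundary value; this handles both the existence claim defining $\gL^M\supseteq\bR_+$ and the identification of $\imm M(x+i0)$ with $S(x)$. The remaining point is that $S(x)>0$, equivalently $\det S(x)\neq0$, for almost every $x>0$: since $S(x)=I_n\otimes\widetilde S(x)$ with $\widetilde S(x)$ a real-analytic matrix-valued function of $x$ that is not identically singular (its diagonal grows like $\sqrt x/4\pi$), its determinant is a nonzero real-analytic function and thus vanishes only on a discrete set. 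On the complementary full-measure set $\sqrt{S(x)}$ is a well-defined invertible square root, $\kH_x=\kH$, and the abstract formula of Theorem~\ref{trep} specializes exactly to the asserted expression, completing the proof.
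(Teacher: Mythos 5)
Your proposal is correct and takes essentially the same route as the paper, which proves this theorem in a single line by passing to the simple part $\widehat{H}$ (since $H$ is not simple, $H_\Theta=\widehat{H}_\Theta\oplus H_s$) and invoking Theorem \ref{trep} together with the Weyl function \eqref{W3}. Your extra steps — identifying $\imm M(x+i0)$ entrywise with $S(x)$ (legitimate here because $M(z)$ is complex symmetric), justifying the boundary limit via analyticity in $\sqrt{z}$, and checking $\det S(x)\neq 0$ for a.e.\ $x>0$ so that $\kH_x=\kH$ and $I_{\kH_x}=I_{nm}$ — simply make explicit the details the paper leaves implicit.
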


\section{Two-dimensional  Schr\"{o}dinger  operator  with  point  interactions}
  In this section, we consider  in $L^2(\bR^2,\bC^n)$ matrix Schr\" {o}dinger differential  expression  \eqref{eq1}
(see \cite{Ada07,AGHH88,AK1,hk}).
  Minimal symmetric operator $H$ associated with \eqref{eq1} in  $L^2(\bR^2,\bC^n)$ is defined
  by \eqref{eq2}.  As above, the  operator $H$ is   closed and the deficiency  indices  of  $H$ are  $n_{\pm}(H)=nm.$
%
%
%
 \subsection{Boundary  triplet  and Weyl  function}
 In the following proposition we describe boundary triplet  for the  adjoint operator  $H^*$.
 Let us denote
\begin{equation*}
r_j:=\vert x-x_j\vert=\sqrt{(x^1-x^1_j)^2+(x^2-x^2_j)^2},\quad
x=(x^1,x^2)\in\bR^2.
\end{equation*}

\begin{proposition}\label{pr1a}
Let  $H$ be the minimal Schr\"{o}dinger  operator  defined by \eqref{eq2}.
Then  the following assertions hold.
\newline
 $(i)$ The domain  of   $H^*$ is  defined  by

\begin{equation}\label{eq3a}
\dom(H^*) =\left\{ f= \sum\limits^m_{j=1}\bigl(\xi_{0j}\,e^{-r_j}
\ln(r_j)+\xi_{1j}\,e^{-r_j}\bigr) + f_H\ :\
\xi_{0j},\xi_{1j}\in\bC^n,\quad f_H\in\dom(H)\right\}.
\end{equation}
 $(ii)$
 The boundary  triplet   $\Pi
=\{\kH,\Gamma_0,\Gamma_1\}$  for  $H^*$ might  be defined as
follows
\begin{gather}  \label{T2}
\kH=\oplus_{j=1}^m\bC^n,\quad \Gamma_0
f:=\{\Gamma_{0j}f\}_{j=1}^m=- 2\pi\,\{\lim_{x\rightarrow x_j}\frac
{f(x)}{ \ \ln|x-x_j|}\}^m_{j=1}=2\pi\{\xi_{0j}\}_{j=1}^m,\\
\Gamma_1 f:=\{\Gamma_{1j}f\}_{j=1}^m =\{\lim_{x\rightarrow
x_j}\left(f(x)- \ln|x-x_j|\xi_{0j}\right)\}^m_{j=1},\quad f \in
\dom(H^*)\label{T2'}.
\end{gather}
$(iii)$ The operator    $H_0=H^*\upharpoonright \ker(\Gamma_0)$ is
self-adjoint  with $\dom(H_0)=W^2_2(\bR^2,\bC^n)$.
\end{proposition}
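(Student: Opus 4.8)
The plan is to establish the two-dimensional analogue of Proposition \ref{pr1} by closely paralleling the three-dimensional argument, with the Green's function $e^{-r}/r$ in $\bR^3$ replaced by its logarithmic counterpart $e^{-r}\ln r$ in $\bR^2$, reflecting the fact that $-\Delta$ in two dimensions has fundamental solution proportional to $\ln|x|$. As in the earlier proof, I would first reduce to $n=1$, since the tensor structure over $\bC^n$ is inert throughout. For assertion $(i)$, I would verify that each candidate function $f_j=e^{-r_j}\ln(r_j)$ and $g_j=e^{-r_j}$ lies in $\dom(H^*)$ by checking the defining integration-by-parts identities $(H\varphi, f_j)=(\varphi, H^* f_j)$ for all $\varphi\in C_0^\infty(\bR^2\setminus X)$, using the two-dimensional second Green formula \eqref{green} on an annulus $B_r(x_j)\setminus B_{1/r}(x_j)$ and sending $r\to\infty$. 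The outer boundary terms vanish by compact support; the inner boundary terms must be computed using $\frac{\partial}{\partial n}(e^{-r}\ln r)$ and the length $2\pi\rho$ of the small circle $S_\rho(x_j)$. A dimension count ($\dim(\dom(H^*)/\dom(H))=2mn$ and linear independence of the $f_j,g_j$) then forces \eqref{eq3a}.

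For assertion $(ii)$ I would apply the proposed maps \eqref{T2}--\eqref{T2'} to a general $f=\sum_k(\xi_{0k}e^{-r_k}\ln r_k+\xi_{1k}e^{-r_k})+f_H$, extracting $\Gamma_0 f$ from the coefficient of the logarithmic singularity and $\Gamma_1 f$ as the regularized boundary value, exactly as \eqref{tripl} does in the three-dimensional case. The Green identity $(H^*f,g)-(f,H^*g)=(\Gamma_1 f,\Gamma_0 g)-(\Gamma_0 f,\Gamma_1 g)$ then reduces to computing the analogue of \eqref{green'}, namely the symmetric bilinear pairing $(H^*(e^{-r_j}\ln r_j),e^{-r_k})-(e^{-r_j}\ln r_j,H^*(e^{-r_k}))$, again via the inner circle integral; the normalization constant $2\pi$ in \eqref{T2} is chosen precisely so this pairing produces the correct off-diagonal structure. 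Surjectivity of $\Gamma=(\Gamma_0,\Gamma_1)^\top$ follows by solving explicitly for $\xi_0,\xi_1$ in terms of prescribed boundary data $(h_0,h_1)$, mirroring the final paragraph of the earlier proof with the matrices $E_0,E_1$ now built from $e^{-|x_j-x_k|}\ln|x_j-x_k|$ and $e^{-|x_j-x_k|}$. Assertion $(iii)$ is immediate once \eqref{eq3a} is in hand: the condition $\Gamma_0 f=0$ kills all logarithmic terms, leaving $f=\sum_k\xi_{1k}e^{-r_k}+f_H\in W^2_2(\bR^2,\bC^n)$, and self-adjointness of $H_0$ follows from Corollary $(i)$.

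The main obstacle will be the inner boundary-term computation: the logarithmic Green function makes the small-circle asymptotics genuinely more delicate than in $\bR^3$. In three dimensions the $1/r_j$ singularity pairs against the solid-angle factor $4\pi\rho^2$ to give a clean finite limit, whereas here $\ln\rho\to-\infty$ must be balanced against the circumference $2\pi\rho$ and the cutoff $\rho=1/r$, so I expect to track the competing rates carefully to confirm that $\frac{\partial}{\partial n}(e^{-r}\ln r)$ contributes a finite residue extracting exactly $\xi_{0j}$ (hence the definition of $\Gamma_{0j}$ via $f(x)/\ln|x-x_j|$) while the $\Gamma_1$-terms produce the regularized value $\lim_{x\to x_j}(f(x)-\ln|x-x_j|\,\xi_{0j})$. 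Getting the sign and the constant $2\pi$ consistent between \eqref{T2}, the Green identity, and the surjectivity solve is where the care is needed; everything else transcribes routinely from Proposition \ref{pr1}.
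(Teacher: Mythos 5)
Your proposal is correct and follows essentially the same route as the paper: the dimension count $\dim(\dom(H^*)/\dom(H))=2mn$ combined with linear independence of the $2mn$ singular functions for \eqref{eq3a}, the Green identity handled exactly as in the 3D case with the $2\pi$ normalization, surjectivity by explicitly solving $\xi_0=\tfrac1{2\pi}h_0$, $\xi_1=(E_1\otimes I_n)^{-1}\bigl(h_1+\tfrac1{2\pi}(E_0\otimes I_n)h_0\bigr)$ with $E_0,E_1$ as in \eqref{hat'}, and $(iii)$ from the representation $f=\sum_j\xi_{1j}e^{-r_j}+f_H$ together with self-adjointness of $A_0$ from the abstract corollary. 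The only divergence is cosmetic: for $(i)$ the paper cites the known characterization $\dom(H^*)=\bigl\{f\in L^2\cap W^2_{2,\loc}(\bR^2\setminus X):\ \Delta f\in L^2\bigr\}$ and declares membership of $e^{-r_j}\ln(r_j)$ and $e^{-r_j}$ obvious, whereas you re-run the annulus Green-formula verification from the 3D proof --- a more self-contained substitute, consistent with the fact that the paper itself dismisses the 2D Green identity as ``verified similarly to 3D case.''
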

\begin{proof}
$(i)$ It is well known (see \cite{AGHH88,AK1}) that
\begin{displaymath}
\dom (H^*)=\bigl\{f\in L^2(\bR^2,\bC^n)\cap W^2_{2,\text{\rm
loc}}(\bR^2\backslash\{X\},\bC^n): \Delta f\in
L^2(\bR^2,\bC^n)\bigr\}.
\end{displaymath}
  Obviously,   functions $f_j= \eta_{j}e^{-r_j} \ln(r_j)$
 and $g_j=\mu_{j}e^{-r_j}$ ( $\eta_{j},\mu_{j}\in \bC^n,\quad j\in\{1,..,m\}$) belong to $\dom(H^*)$. Their
linear  span is  $2mn$-dimensional subspace  in  $\dom(H^*)$ that
has
 trivial  intersection  with  $\dom(H)$. Since
$\dim(\dom(H^*)/\dom(H)) =2mn$, the  domain  $\dom(H^*)$ takes the form
\eqref{eq3a}.

$(ii)$  The  second  Green  identity  is  verified  similarly  to  3D  case.  
  From   \eqref{eq3a}  it follows that the mapping $\Gamma=(\Gamma_0,\Gamma_1)^\top$
is  surjective.   Namely, let
$(h_0,h_1)^\top\in\cH\oplus\cH$,
 where $h_0=\{h_{0j}\}_{j=1}^m,h_1=\{h_{1j}\}_{j=1}^m$
 are  vectors from $\oplus_{j=1}^m\mathbb{C}^n$.  If $f\in\dom(H^*)$,
  then, by \eqref{eq3a},  $f=f_H+\sum\limits^m_{j=1}\bigl(\xi_{0j}e^{-r_j}\ln(r_j)+\xi_{1j}e^{-r_j}\bigr)$.
  Let us  put
\begin{multline}\label{hat'}
\qquad\qquad\qquad\qquad\qquad\qquad\qquad\xi_0:=\{\xi_{0j}\}_{j=1}^m,\,\,\xi_1:=\{\xi_{1j}\}_{j=1}^m,\,\,\\
E_0:=\left(-e^{-|x_k-x_j|}\ln(|x_k-x_j|+\delta_{kj})\right)_{j,k=1}^m,\quad
E_1:=\left(e^{-|x_k-x_j|}\right)_{k,j=1}^m.
\end{multline}
Therefore if $\xi_0=\tfrac1{2\pi}h_0$ and
$\xi_1={(E_1\otimes
I_n)}^{-1}(h_1+\tfrac1{2\pi}(E_0\otimes I_n)h_0)$,
then $\Gamma_0f=h_0$  and $\Gamma_1f=h_1$.
Thereby, ${\rm (ii)}$ is  proved.

$(iii)$ From  \eqref{eq2} and  \eqref{eq3a} it  follows that any
function  $f\in W^2_2(\bR^2,\bC^n)$ admits the representation
$f=\sum\limits^m_{j=1}\xi_{1j}\,e^{-r_j} + f_H$, where
$\sum\limits_{k=1}^m\xi_{k1}e^{-|x_k-x_j|}= f(x_j)$ which proves
$(iii)$.
\end{proof}
\begin{proposition}\label{pr2a}
Let  $H$ be the minimal Schr\"{o}dinger operator  and let
$\Pi=\{\kH,\Gamma_0,\Gamma_1\}$  be the boundary triplet  for
$H^*$ defined by \eqref{T2}-\eqref{T2'}. Then

$(i)$  the Weyl function $M(\cdot)$ corresponding to the boundary
triplet $\Pi$ has  the form
\begin{equation}\label{W2}
M(z)=\bigoplus\limits_{s=1}^nM_s(z), \quad
M_s(z)=\left(\tfrac1{2\pi}(\psi(1)-
\ln(\tfrac{\sqrt{z}}{2i}))\delta_{jk}+\widetilde{G}_{\sqrt{z}}(x_j-x_{k})\right)_{j,k=1}^m,
\quad z\in\mathbb{C}_+,
 \end{equation}
  where $\psi(1)=\frac{\Gamma'(1)}{\Gamma(1)}$, \quad
      $\widetilde{G}_{\sqrt{z}}(x)=\left\{%
  \begin{array}{ll}
    i/4 H_0^{(1)}(\sqrt{z}|x|), & \hbox{x$\neq$0;} \\
    0, & \hbox{x=0.} \\
\end{array}%
\right.$

and $H^{(1)}_0(\cdot)$ denotes the Hankel function of the first
kind and order 0;

$(ii)$  the corresponding $\gamma(\cdot)$-field  is
\begin{equation}\label{g2}
\gamma(z)\overline{\xi}=\sum\limits^m_{j=1}\xi_j\,
\tfrac{i}4H^{(1)}_0(\sqrt{z}r_j),\quad
\overline{\xi}=\{\xi_j\}_{j=1}^m,\quad \xi_j\in\bC^n,\quad
z\in\bC_+.
\end{equation}
 \end{proposition}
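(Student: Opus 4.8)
The final statement is Proposition \ref{pr2a}, which computes the Weyl function $M(\cdot)$ and the $\gamma$-field for the two-dimensional Schrödinger operator with $m$ point interactions. The plan follows exactly the structure used in the three-dimensional case (Proposition \ref{pr2}), exploiting that the defect subspaces $\kN_z$ are spanned by explicit Green's-function-type solutions of $(-\Delta - z)f = 0$.

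First I would identify the defect subspace $\kN_z$ for $z\in\bC_+$. The fundamental solution of the two-dimensional Helmholtz equation $(-\Delta - z)G = \delta$ is $G_{\sqrt{z}}(x) = \tfrac{i}{4}H_0^{(1)}(\sqrt{z}|x|)$, where $H_0^{(1)}$ is the Hankel function of the first kind; this decays as $|x|\to\infty$ precisely because $\sqrt{z}\in\bC_+$ under the branch convention $\sqrt{1}=1$. Hence any $f_z\in\kN_z = \ker(H^*-z)$ has the form
\begin{equation*}
f_z=\sum_{j=1}^m a_j\,\tfrac{i}{4}H_0^{(1)}(\sqrt{z}\,r_j),\qquad a_j\in\bC^n,
\end{equation*}
as is standard and referenced in \cite{AGHH88,AK1}. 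Substituting $z\to 0$ into this expression and then taking $\Gamma_0$ in \eqref{2.3A} immediately yields \eqref{g2}, the $\gamma$-field, once I know how $\Gamma_0$ acts on $f_z$.

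The heart of the computation is applying the boundary maps $\Gamma_0,\Gamma_1$ from \eqref{T2}-\eqref{T2'} to $f_z$. This requires the logarithmic short-distance asymptotics of the Hankel function: as $r\to 0$,
\begin{equation*}
\tfrac{i}{4}H_0^{(1)}(\sqrt{z}\,r)=\tfrac{1}{2\pi}\ln r+\tfrac{1}{2\pi}\bigl(\psi(1)-\ln(\tfrac{\sqrt{z}}{2i})\bigr)+o(1).
\end{equation*}
From the $\ln r$ coefficient, the definition of $\Gamma_{0j}$ (the normalized logarithmic singularity) gives $\Gamma_0 f_z=\{a_j\}_{j=1}^m$, which pins down $\gamma(z)$ and hence \eqref{g2}. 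For $\Gamma_1$, I evaluate the regularized limit $\lim_{x\to x_j}\bigl(f_z(x)-\ln|x-x_j|\,\xi_{0j}\bigr)$: the regular constant term $\tfrac1{2\pi}(\psi(1)-\ln(\tfrac{\sqrt{z}}{2i}))$ of the $j$th Hankel function contributes the diagonal entry, while the off-diagonal terms $k\neq j$ contribute $\widetilde{G}_{\sqrt{z}}(x_j-x_k)=\tfrac{i}{4}H_0^{(1)}(\sqrt{z}|x_j-x_k|)$, since those functions are smooth at $x_j$. Assembling these gives $M_s(z)$, and the block-diagonal tensor structure $M(z)=\bigoplus_{s=1}^n M_s(z)$ follows since the scalar case decouples over the $n$ components (as noted at the start of the $3$D proof, one may assume $n=1$). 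By Definition \ref{Weylfunc}, $M(z)=\Gamma_1\gamma(z)$ then matches \eqref{W2}.

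The main obstacle will be getting the logarithmic asymptotic expansion of $H_0^{(1)}$ correct to order $o(1)$, including the precise constant involving $\psi(1)$ and the branch of $\ln(\sqrt{z}/2i)$ — the normalization factors ($2\pi$ versus $4\pi$, the sign conventions in \eqref{T2}) must be tracked carefully so that the diagonal term of $M_s(z)$ emerges with exactly the stated coefficient. The second Green identity required for verifying that $\Pi$ is a genuine boundary triplet is asserted in Proposition \ref{pr1a}(ii) to be analogous to the $3$D case, so I would not repeat it; the content here is purely the asymptotic bookkeeping that converts the Hankel expansion into the matrix entries of \eqref{W2}.
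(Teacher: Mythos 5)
Your overall route is exactly the paper's: take the defect basis $f_z=\sum_{j=1}^m a_j\,\tfrac{i}{4}H_0^{(1)}(\sqrt{z}\,r_j)$ from \cite[chapter II.4]{AGHH88}, expand the Hankel function near the origin, apply the boundary maps \eqref{T2}--\eqref{T2'} to $f_z$, and read off \eqref{g2} and \eqref{W2} via Definition \ref{Weylfunc}; the reduction to $n=1$ and the treatment of the off-diagonal (smooth) terms coincide with the paper's argument as well. One small conceptual slip: obtaining \eqref{g2} involves no ``substituting $z\to 0$'' --- $\gamma(z)$ is simply $\bigl(\Gamma_0\upharpoonright\kN_z\bigr)^{-1}$ once $\Gamma_0 f_z=\{a_j\}_{j=1}^m$ is known; your later paragraph treats this correctly.

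However, the formula you yourself single out as the crux carries a sign error that, as written, makes the key step fail. From the expansion $H_0^{(1)}(w)=1+\tfrac{2i}{\pi}\bigl(\ln(\tfrac{w}{2})-\psi(1)\bigr)+o(w)$ (the paper's \eqref{eqas}), the logarithmic coefficient after multiplying by $i/4$ is $\tfrac{i}{4}\cdot\tfrac{2i}{\pi}=-\tfrac{1}{2\pi}$, so the correct short-distance behavior is
\begin{equation*}
\tfrac{i}{4}H_0^{(1)}(\sqrt{z}\,r)=-\tfrac{1}{2\pi}\ln r+\tfrac{1}{2\pi}\bigl(\psi(1)-\ln(\tfrac{\sqrt{z}}{2i})\bigr)+o(1),
\end{equation*}
whereas you wrote $+\tfrac{1}{2\pi}\ln r$. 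Your constant term is right (the extra $i/4$ is correctly absorbed via $\ln(1/i)=-i\pi/2$ into $\ln(\tfrac{\sqrt{z}}{2i})$), but with your logarithmic coefficient the map $\Gamma_{0j}f=-2\pi\lim_{x\to x_j}f(x)/\ln|x-x_j|$ would give $\Gamma_0 f_z=\{-a_j\}_{j=1}^m$ rather than $\{a_j\}_{j=1}^m$ as in the paper's \eqref{eq*a}, and consequently both $\gamma(z)$ and $M(z)=\Gamma_1\gamma(z)$ would emerge with the opposite sign to \eqref{g2} and \eqref{W2}. The $-2\pi$ normalization in \eqref{T2} exists precisely to cancel the $-\tfrac{1}{2\pi}$ in the Hankel expansion, so the error strikes exactly at the ``asymptotic bookkeeping'' you flagged as the main obstacle. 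The repair is local --- fix the sign and rerun the two limits --- after which your outline reproduces the paper's proof.
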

\begin{proof}

 Let  $f_z\in \kN_z,\ z\in\bC_+$. Then,  according  to \cite[chapter II.4]{AGHH88},
\begin{equation*}\label{eq7a}
f_z:=
\sum\limits^m_{j=1}a_j\,\tfrac{i}4H^{(1)}_0(\sqrt{z}r_j),\qquad
a_j\in\bC^n.
\end{equation*}
 It is not  difficult  to  see  that,  by  formulas $(9.01)$ in \cite[Section 2,\S 9]{Olv} and
 (5.03), (5.07)   in  \cite[Section 7,\S5]{Olv},  the function $H^{(1)}_0(z)$ has
  the  following  asymptotic   expansion  at $0$
\begin{equation}\label{eqas}
H^{(1)}_0(z)=1+\tfrac{2i}{\pi}(
\ln(\tfrac{z}2)-\psi(1))+o(z),\quad z\rightarrow 0.
\end{equation}
 Applying $\Gamma_0$ and  $\Gamma_1$ to $f_z$  and taking  into  account  \eqref{eqas}, we
 get
\begin{equation}\label{eq*a}
\Gamma_0f_z=\{a_j\}^m_{j=1},\qquad
 \Gamma_1f_z=\left\{\left(\tfrac{\psi(1)}{2\pi}+\tfrac{i}4-\tfrac{ \ln(\tfrac{\sqrt{z}}2)}{2\pi}\right)a_j
 +\sum\limits_{k\neq j}\tfrac i{4}H_0^{(1)}(\sqrt{z}|x_k-x_j|)a_k\right\}^m_{j=1},\quad
\end{equation}

 Further, combining   \eqref{eq*a} with \eqref{2.3A}, we  get \eqref{W2} and \eqref{g2}.
 \end{proof}

  \subsection{Proper  extensions of the  minimal Schr\"{o}dinger  operator $H$}
 As in previous  section, we  describe proper  extensions of the  minimal operator $H$.
\begin{theorem}\label{pr3'}
Let   $H$  be the minimal  Schr\"{o}dinger  operator, let
$\Pi=\{\kH,\Gamma_0,\Gamma_1\}$  be  the boundary triplet for
$H^*$ defined  by  \eqref{T2}-\eqref{T2'},  and  $M(\cdot)$
the corresponding Weyl function. Assume   also that
$\xi_0,\xi_1, E_0, E_1$  are  defined  by
\eqref{hat'}
%
and  $H_{C,D}$  is a proper extension of $H$. Then  the following
assertions hold.
\newline
$(i)$ Any   proper  extension  $H_{C,D}$ of $H$ is described  as
follows
\begin{equation*}\label{concr'}
\dom(H_{C,D})=\left\{ f \in \dom(H^*):\, D(E_1\otimes
I_n)\xi_1=(2\pi C+D(E_0\otimes I_n))\xi_0
\right\}, \quad C,D\in[\cH].
\end{equation*}
$(ii)$  Extension    $H_{C,D}$  is    self-adjoint  if and only if
\eqref{s-acond} holds.
\newline
$(iii)$  Friedrichs  extension   $H_F$  of  $H$ coincides
 with $H_0$:
\begin{equation*}
\dom(H_F)=\dom(H_0)=W^2_2(\bR^2,\bC^n).
\end{equation*}
$(iv)$  The domain  $\dom(H_K)$ of the Krein extension $H_K$  is
\begin{equation*}
\dom(H_K)=\left\{%
\begin{array}{ll}
    \dom(H_0), & \hbox{m=1;} \\
    \{f\in \dom(H^*):\,\,(\Gamma_0f,\Gamma_1f)^{\top}\in M(0)\}, & \hbox{$m>1$.} \\
\end{array}%
\right.
\end{equation*}
where  
\begin{gather}\label{op1}
\dom(M(0)_{op})=\oplus_{s=1}^n\dom(M_s(0)_{op}),\quad
\dom(M_s(0)_{op})=\left\{\xi=\{\xi_j\}_{j=1}^m\in
\mathbb{C}^m:\,\sum\limits_{j=1}^m\xi_j=0 \right\},\\\label{mul}
\mul(M(0))=\oplus_{s=1}^n\Span\{e_{mul}\},\quad
e_{mul}=\{e_j\}_{j=1}^m=\{1\}_{j=1}^m.
\end{gather}

 $(v)$ Krein  formula for  canonical  resolvents  takes
the form
 \begin{equation*}\label{Kre2}
 R_z(H_{C,D}) = R_z(H_0) + \gga(z)\bigl(C
 -DM(z)\bigr)^{-1}D\gga(\overline{z})^*,\,\,
 z\in\rho(H_{C,D})\setminus\bR_+,
 \end{equation*}
where $\gamma(\cdot)$-field is  defined  by \eqref{g2} and
$R_z(H_0)$ is an integral operator with  the kernel
$G_{\sqrt{z}}(x,x')=i/4 H_0^{(1)}(\sqrt{z}|x-x'|)\otimes I_n$ .
\end{theorem}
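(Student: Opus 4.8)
The plan is to follow the route of the three-dimensional Theorem \ref{pr1'}, since the abstract tools—Propositions \ref{propo}, \ref{prkf} and the Krein formula \eqref{krein}—are dimension-independent; only the concrete input, namely the Weyl function \eqref{W2} and the free resolvent kernel, changes. First I would record the explicit action of $\Gamma_1$ on a general $f=\sum_{j}(\xi_{0j}e^{-r_j}\ln r_j+\xi_{1j}e^{-r_j})+f_H\in\dom(H^*)$. Inserting $f$ into \eqref{T2'} and using $(e^{-r_j}-1)\ln r_j\to 0$ together with $f_H(x_j)=0$ gives, in the notation \eqref{hat'}, $\Gamma_0 f=2\pi\xi_0$ and $\Gamma_1 f=(E_1\otimes I_n)\xi_1-(E_0\otimes I_n)\xi_0$ (the two-dimensional analogue of \eqref{tripl}). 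Substituting these into the proper-extension description $\dom(A_{C,D})=\dom(A^*)\!\upharpoonright\ker(D\Gamma_1-C\Gamma_0)$ of \eqref{ext repr} yields at once the defining relation $D(E_1\otimes I_n)\xi_1=(2\pi C+D(E_0\otimes I_n))\xi_0$ of part $(i)$; part $(ii)$ is then the Rofe-Beketov self-adjointness criterion \eqref{s-acond}, applicable because $\dim\cH=nm<\infty$.

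For part $(iii)$ I would determine the behaviour of $M(x)$ as $x\downarrow-\infty$. Along the negative axis $\sqrt z=i\sqrt{|x|}$, so in \eqref{W2} the diagonal entry $\tfrac1{2\pi}(\psi(1)-\ln(\sqrt{|x|}/2))\to-\infty$, while for $j\ne k$ the large-argument asymptotics of $H^{(1)}_0$ force $\widetilde G_{\sqrt z}(x_j-x_k)=\tfrac{i}{4}H^{(1)}_0(i\sqrt{|x|}\,|x_j-x_k|)\to 0$ exponentially. Hence $(M(x)f,f)\to-\infty$ for every $f\ne 0$, and Proposition \ref{prkf}$(iv)$ gives $H_F=H_0$; the domain statement then follows from Proposition \ref{pr1a}$(iii)$.

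Part $(iv)$ is where the genuinely two-dimensional phenomenon appears and is the main obstacle. Using the Hankel expansion \eqref{eqas} one checks that every entry of $M_s(z)$—diagonal and off-diagonal alike—carries the same logarithmic singularity, so that
\[
M_s(z)=c(z)\,J+B+o(1),\qquad c(z):=\tfrac1{2\pi}\bigl(\psi(1)-\ln(\sqrt z/(2i))\bigr),
\]
where $J$ is the $m\times m$ all-ones matrix, $B_{jj}=0$, $B_{jk}=-\tfrac1{2\pi}\ln|x_j-x_k|$, and $c(x)\to+\infty$ as $x\uparrow 0$. Consequently the boundary form $\mathrm t_0[\xi]=\lim_{x\uparrow0}(M_s(x)\xi,\xi)=\lim_{x\uparrow0}c(x)\,\bigl|\sum_j\xi_j\bigr|^2+(B\xi,\xi)$ is finite precisely on $\{\xi:\sum_j\xi_j=0\}=\{e_{mul}\}^{\perp}$ and equals $+\infty$ otherwise. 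By the structure of self-adjoint relations recalled in Section \ref{prelim} (the multivalued part is the orthogonal complement of the domain), this identifies $\dom(M_s(0)_{op})$ and forces $\mul(M_s(0))=\Span\{e_{mul}\}$, i.e.\ \eqref{op1}--\eqref{mul}; the relation $M(0)=s$-$R$-$\lim_{x\uparrow0}M(x)$ of Proposition \ref{prkf}$(i)$--$(ii)$ then delivers $\dom(H_K)$ through the formula $\dom(H_K)=\{f\in\dom(H^*):(\Gamma_0 f,\Gamma_1 f)^\top\in M(0)\}$. Here $M(0)$ is genuinely a relation, not an operator as in the three-dimensional case, so Proposition \ref{prkf}$(iii)$ (the disjoint case) does not apply and one must work with part $(ii)$. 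The case $m=1$ has to be separated: there $\dom(M_s(0)_{op})=\{0\}$, $M(0)$ is purely multivalued, and $(\Gamma_0 f,\Gamma_1 f)^\top\in M(0)$ collapses to $\Gamma_0 f=0$, whence $\dom(H_K)=\ker\Gamma_0=\dom(H_0)$.

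Finally, part $(v)$ follows by specializing the general Krein formula \eqref{krein} to the present $\gamma$-field \eqref{g2}, together with the known kernel $G_{\sqrt z}(x,x')=\tfrac{i}{4}H^{(1)}_0(\sqrt z|x-x'|)\otimes I_n$ of $(H_0-z)^{-1}$ in $L^2(\bR^2,\bC^n)$; the restriction $z\in\rho(H_{C,D})\setminus\bR_+$ secures $z\in\rho(H_0)$ since $\sigma(H_0)=\bR_+$. The only delicate point throughout is the relation computation in $(iv)$: in contrast to three dimensions, where $M(0)$ is bounded and $H_K$ has the clean kernel description \eqref{eqHk}, here all entries blow up along the single direction $e_{mul}$, and the whole difficulty is to isolate that direction cleanly from the asymptotics \eqref{eqas} and to track the resulting multivalued part.
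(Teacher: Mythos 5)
Your proposal is correct and follows essentially the same route as the paper: parts $(i)$--$(ii)$ via the representation \eqref{ext repr} and the Rofe--Beketov conditions \eqref{s-acond}, part $(iii)$ via the large-argument Hankel asymptotics and Proposition \ref{prkf}$(iv)$, part $(iv)$ via the expansion \eqref{eqas} leading to the quadratic-form limit (the paper's \eqref{qfop}) with $\dom(\mathrm{t}_0)=\{\xi:\sum_j\xi_j=0\}$ and $\mul(M(0))$ its orthogonal complement, and part $(v)$ via \eqref{krein} and the known free-resolvent kernel. Your treatment of the case $m=1$ (reading $\dom(H_K)=\ker\Gamma_0$ off the purely multivalued $M(0)$ rather than invoking Proposition \ref{prkf}$(iv)$ to get $H_F=H_K=H_0$) is a trivially equivalent variant of the paper's argument.
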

\begin{proof}
$(i)$ and $(ii)$  follow from the  representation \eqref{ext repr}.

 $(iii)$
 From  the  asymptotic representation (see, for   instance,  formula (4.03) in \cite[Section 7,\S
 4]{Olv})
 \begin{equation*}
H_0^{(1)}(z)\sim\sqrt{\tfrac2{\pi z}}e^{i(z-\pi/4)}, \quad
|z|\rightarrow \infty,
 \end{equation*}
 it  easily  follows that  $\lim\limits_{x\downarrow-\infty}(M(x)f,f)=-\infty,\quad f\in\kH\setminus\{0\}.$
 Thus, by    Proposition  \ref{prkf}$(iv)$, $H_F=H_0$.
   \newline
$(iv)$  In the case $m=1$,  the  Weyl  function  has the form
$M(z)=(\psi(1)- \ln(\tfrac{\sqrt{z}}{2i}))I_n.$ The  latter yields
\begin{equation*}
\lim\limits_{x\downarrow-\infty}(M(x)f,f)=-\infty,\qquad
\lim\limits_{x\uparrow0}(M(x)f,f)=+\infty,\quad
f\in\kH\setminus\{0\}.
\end{equation*}
By Proposition  \ref{prkf}$(iv)$, $H_F=H_K=H_0$.
 Furthermore, from the equality  $H_K=H_F$  it  follows that  operator  $H$  has no other nonnegative  self-adjoint
 extensions  (see \cite{Kre47}).


  Consider  the case  $m>1$.
 For simplicity suppose
that $n=1$.  Let  $\xi=\{\xi_j\}_{j=1}^m\in\mathbb{C}^m$. Using
asymptotic   expansion \eqref{eqas}, 
we   get
\begin{multline}\label{qfop}
(M(z)\xi,\xi)\sim \tfrac1{2\pi}(\psi(1)-
\ln(\tfrac{\sqrt{z}}{2i}))\sum\limits_{j=1}^m|\xi_j|^2+\sum\limits_{k\neq
j}\tfrac1{2\pi}\left(\psi(1)-
\ln(\tfrac{\sqrt{z}}{2i})-\ln(|x_k-x_j|)\right)\xi_j\overline{\xi_k}=\\=\tfrac1{2\pi}(\psi(1)-
\ln(\tfrac{\sqrt{z}}{2i}))\left(|\sum\limits_{j=1}^m\xi_j|\right)^2-\tfrac1{2\pi}\sum\limits_{k\neq
j}\ln(|x_k-x_j|)\xi_j\overline{\xi_k},\quad z\rightarrow 0.
\end{multline}
From \eqref{qfop} it  easily  follows   that  limit
$\lim\limits_{x\uparrow 0}(M\xi,\xi)$  is finite  if  and  only if
$\sum\limits_{j=1}^m\xi_j=0$.  Thus,  the  domain of the  operator  part  $M(0)_{op}$  is  described by  \eqref{op1}
Finally,  \eqref{mul}  takes
place since  $\mul(M(0))$  and $\dom(M(0)_{op})$   are orthogonal.

   Applying  Proposition  \ref{prkf}$(ii)$
completes  the proof of $(iv).$

  Combining  \eqref{krein} with the  formula  for  the
 kernel of  $(H_0-z)^{-1}$ (see \cite[chapter I.5]{AGHH88}), we
 obtain $(v)$.
\end{proof}
As in the  case of 3D Schr\"{o}dinger operator, only  certain
$m$-parametric family $H_{\alpha,X}^{(2)}$ associated in
$L^2(\bR^2)$ with the
 differential  expression  \eqref{eq0} is   described in \cite[chapter II.1, Theorem 4.1]{AGHH88}.
\begin{proposition}
Let  $\Pi$ be  the  boundary triplet for $H^*$ defined by
  \eqref{T2}-\eqref{T2'}.
  Then  the domain of   $H_{\alpha,X}^{(2)}$   has  the following representation
 \begin{equation*}\label{diag2}
\dom(H^{(2)}_{\alpha,X})=\dom(H^*)\upharpoonright\ker(\Gamma_1-B_\alpha\Gamma_0),
\quad B_\alpha=\diag(\alpha_1,.., \alpha_m),
\,\alpha_k\in\mathbb{R},\, k\in\{1,..,m\}.
 \end{equation*}
 \end{proposition}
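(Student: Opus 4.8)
The plan is to mirror the three-dimensional argument behind \eqref{diag}. The operator $H_{\alpha,X}^{(2)}$ is defined in \cite[chapter II.4]{AGHH88} only through its resolvent, whereas the boundary triplet $\Pi$ produces, for the choice $C=B_\alpha$ (understood as $B_\alpha\otimes I_n$) and $D=I_{nm}$, a concrete proper extension $H_{B_\alpha,I_{nm}}$. I would show that these two operators share the same resolvent on a common resolvent set; since the resolvent determines the operator, they must coincide, and the domain representation then follows from the boundary-triplet description.

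First I would check that $H_{B_\alpha,I_{nm}}$ is a self-adjoint extension disjoint from $H_0$. Self-adjointness is immediate from \eqref{s-acond}: with $C=B_\alpha$ real and diagonal and $D=I_{nm}$ one has $CD^*=B_\alpha=B_\alpha^*=DC^*$ and $CC^*+DD^*=B_\alpha^2+I_{nm}\ge I_{nm}>0$, so $0\in\rho(B_\alpha^2+I_{nm})$. Since $D=I_{nm}$ is boundedly invertible, $H_{B_\alpha,I_{nm}}$ and $H_0$ are disjoint, so by the corollary to Proposition~\ref{propo}$(iii)$ its domain equals $\dom(H^*)\!\upharpoonright\ker(\Gamma_1-B_\alpha\Gamma_0)$, which is exactly the asserted form. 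It therefore remains only to identify this extension with $H_{\alpha,X}^{(2)}$.

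The substantive step is the resolvent comparison. By Theorem~\ref{pr3'}$(v)$, the resolvent of $H_{B_\alpha,I_{nm}}$ is the Krein formula \eqref{krein} with $C-DM(z)=B_\alpha-M(z)$, where $M(\cdot)$ is the Weyl function \eqref{W2} and $\gamma(\cdot)$ the field \eqref{g2}. The resolvent of $H_{\alpha,X}^{(2)}$ in \cite{AGHH88} is likewise a rank-$nm$ perturbation of $R_z(H_0)$, built from the free Green's functions $\tfrac{i}{4}H_0^{(1)}(\sqrt z\,|\cdot-x_j|)\otimes I_n$ and the inverse of a matrix whose entries are $\alpha_j\delta_{jk}$ minus the regularized Green's function values. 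I would verify that \eqref{g2} reproduces precisely these Green's functions and that $B_\alpha-M(z)$ coincides with the \cite{AGHH88} matrix: the off-diagonal entries of $M(z)$ are $\widetilde{G}_{\sqrt z}(x_j-x_k)=\tfrac{i}{4}H_0^{(1)}(\sqrt z\,|x_j-x_k|)$, matching the off-diagonal Green's functions, while the diagonal entries $\tfrac1{2\pi}(\psi(1)-\ln(\tfrac{\sqrt z}{2i}))$ encode the regularized self-energy term.

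The delicate point---where the real work lies---is the bookkeeping of normalization constants. In two dimensions the free Green's function carries a logarithmic singularity, and one must confirm that the constant $\tfrac1{2\pi}(\psi(1)-\ln(\tfrac{\sqrt z}{2i}))$ generated by $\Pi$ is identically the regularized diagonal term used in \cite{AGHH88}, and that the intensity $\alpha_j$ enters the boundary condition with the correct coefficient (and is not, for instance, rescaled by $2\pi$, as the factors in \eqref{T2} and Theorem~\ref{pr3'}$(i)$ might suggest). Once these constants are reconciled, the two resolvents agree for all $z\in\rho(H_{\alpha,X}^{(2)})\setminus\bR_+$, whence $H_{\alpha,X}^{(2)}=H_{B_\alpha,I_{nm}}$ and the proposition follows.
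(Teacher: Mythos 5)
Your proposal is correct and follows the same route the paper (implicitly) takes: the paper justifies this parametrization exactly by observing that the resolvent description of $H_{\alpha,X}^{(2)}$ in \cite[chapter II.4]{AGHH88} coincides with the Krein formula of Theorem~\ref{pr3'}$(v)$ for the choice $C=B_\alpha$, $D=I$, and your verification of \eqref{s-acond}, disjointness from $H_0$, and the matching of $\gamma(\cdot)$ with the free Green's functions and of $B_\alpha-M(z)$ with the matrix $\Gamma_{\alpha,X}(\sqrt z)$ of \cite{AGHH88} is precisely the bookkeeping that identification requires. Your flagged ``delicate point'' indeed resolves favorably: with the normalization of \cite{AGHH88}, the diagonal term $\tfrac1{2\pi}\bigl(\psi(1)-\ln\bigl(\tfrac{\sqrt z}{2i}\bigr)\bigr)$ and the unrescaled $\alpha_j$ match exactly, as in the three-dimensional case.
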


 Note  that in the
case   $d=2$ it  makes  certain  difficulty to describe
nonnegative  self-adjoint  extensions of $H$ since  $M(0)$ appears
to  be  the relation  with   nontrivial  multivalued  part. We may
overcome this  by  considering  the following  intermediate
extension  of $H$.
\begin{equation*}\label{ext1}
\widetilde{H}:=H^*\upharpoonright\dom(\widetilde{H}),\quad
\dom(\widetilde{H})=\dom(H_F)\cap\dom(H_K).
\end{equation*}
As   above,   assume that $n=1$.  It  is easily  seen that  
\begin{equation*}\label{ext2}
\dom(\widetilde{H})=\left\{ f=
c\sum\limits^m_{j=1}\widetilde{\xi}_j\,e^{-r_j} + f_H\ :\,\widetilde{\xi}=\{\widetilde{\xi}_j\}_{j=1}^{m}=E^{-1}_1e_{mul},\,\,\,\,
c\in\bC,\quad f_H\in\dom(H)\right\}, \quad
\end{equation*}
where  $E_1$ is   defined by  \eqref{hat'}.

 According
 to \cite{Rof85}, we  have
\begin{equation*}
  \cH=\cH_1\oplus \cH_2, \quad \cH_1=\dom(M(0)_{op})  \text{\quad and\quad } \cH_2=\mul(M(0)).
  \end{equation*}  Let  $\pi_j,\,j\in \{1,2\}$ denote the  orthogonal
 projectors  onto  $\cH_j$ . Then the  Weyl function $M(\cdot)$  defined  by \eqref{W2}  admits the  representation
 $M(\cdot)=(M_{kj}(\cdot))_{k,j=1}^2$ with  $M_{k,j}(\cdot)=\pi_k M(\cdot)\upharpoonright\cH_j,\quad
 k,j\in\{1,2\}$.
 One  may simply   verify  that
 \begin{equation*}
 \widetilde{H}=H_1:=H^*\upharpoonright\{f\in \dom(H^*):\,
 \Gamma_0f=\pi_1\Gamma_1f=0\},
 \end{equation*}
with  $\Gamma_0, \Gamma_1$  defined   by \eqref{T2}-\eqref{T2'}.
   From  \cite[Proposition 4.1]{dhms} it  follows  that
 $\widetilde{H}$  is  closed  symmetric operator  in  $L^2(\bR^2)$
  with  deficiency  indices
  $n_{\pm}(\widetilde{H})=\dim(\cH_1)=m-1$.  Proposition
  4.1$(ii)$  in  \cite{dhms} also  yields   that
 $H_1^*=\widetilde{H}^*=H^*\upharpoonright\{f\in \dom(H^*):\,
 \pi_2\Gamma_0f=0\}$,    and  boundary   triplet
$\widetilde{\Pi}=\{\widetilde{\cH},\widetilde{\Gamma}_0,\widetilde{\Gamma}_1\}$
for $\widetilde{H}^*$  might  be defined as   follows
\begin{equation*}
\widetilde{\cH}=\cH_1,\quad
\widetilde{\Gamma}_0=\Gamma_0\upharpoonright\dom(\widetilde{H}^*),\quad
\widetilde{\Gamma}_1=\pi_1\Gamma_1\upharpoonright\dom(\widetilde{H}^*).
\end{equation*}
Moreover,  the Weyl function  $\widetilde{M}(\cdot)$ corresponding
to the boundary  triplet $\widetilde{\Pi}$   are  given by
$\widetilde{M}(\cdot)=M_{11}(\cdot)$  and  the equality
$\widetilde{M}(0)=M(0)_{op}$ is   satisfied.

%
%
\begin{proposition}
Let   $H$   and    $M(0)_{op}$  be  as above and let $H'$ be a
non-negative self-adjoint  extension of $H$. Then

(i)
There exist pairs  $C,D\in[\cH]$  and
$\widetilde{C},\widetilde{D}\in[\widetilde{\cH}]$  satisfying
\eqref{s-acond} and such that
%
%
   \begin{equation*}\label{corr}
H'= H_{C,D}=H^*\upharpoonright\ker(D\Gamma_1-C\Gamma_0)=
\widetilde{H}^*\upharpoonright\ker(\widetilde{D}\widetilde{\Gamma}_1-\widetilde{C}\widetilde{\Gamma}_0)
=:\widetilde{H}_{\widetilde{C},\widetilde{D}}.
       \end{equation*}

(ii)  The extension $H_{C,D} = H_{C,D}^*$ is nonnegative if and
only if
    \begin{equation*}
(\widetilde{C}\widetilde{D}^*-\widetilde{D}M(0)_{op}\widetilde{D}^*h,h)\geq
0, \qquad  h\in \dom(M(0)_{op})\setminus\{0\}.
\end{equation*}
   \end{proposition}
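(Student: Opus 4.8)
The plan is to exploit the intermediate operator $\widetilde H$ together with the facts, already established above, that $\widetilde\Pi=\{\widetilde\cH,\widetilde\Gamma_0,\widetilde\Gamma_1\}$ is a boundary triplet for $\widetilde H^*$ whose Weyl function $\widetilde M(\cdot)=M_{11}(\cdot)$ satisfies $\widetilde M(0)=M(0)_{op}\in[\widetilde\cH]$. The whole point of passing to $\widetilde\Pi$ is that in the original triplet $\Pi$ the limit $M(0)$ is a relation with nontrivial $\mul(M(0))$, so Proposition \ref{prkf}$(vi)$ does not apply directly; after removing the multivalued part, $\widetilde M(0)$ becomes a bounded operator and the clean criterion $\widetilde\Theta-M(0)_{op}\geq0$ becomes available.

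For part $(i)$ the crucial step is to show that every nonnegative self-adjoint extension $H'$ of $H$ contains $\widetilde H$, i.e. $\widetilde H\subseteq H'\subseteq\widetilde H^*$, so that $H'$ is a proper extension of $\widetilde H$ as well as of $H$. Writing $H'=H_\Theta$ with $\Theta=\Theta^*\in\widetilde\cC(\cH)$, Proposition \ref{prkf}$(vi)$ forces $\dom(\mathrm{t}_{\Theta_{op}})\subset\dom(\mathrm{t}_0)=\cH_1$, whence $\overline{\dom(\Theta)}\subseteq\cH_1$ and therefore $\mul(\Theta)\supseteq\cH_1^{\perp}=\cH_2=\mul(M(0))$. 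Now if $f\in\dom(\widetilde H)=\dom(H_F)\cap\dom(H_K)$, then $\Gamma_0f=0$ and $\Gamma_1f\in\mul(M(0))\subseteq\mul(\Theta)$, so $(\Gamma_0f,\Gamma_1f)^\top=(0,\Gamma_1f)^\top\in\Theta$ and hence $f\in\dom(H_\Theta)$; this gives $\widetilde H\subseteq H'$. Applying the bijective parametrization of Proposition \ref{propo} (in the form of representation \eqref{ext repr}) to both triplets $\Pi$ and $\widetilde\Pi$, together with the Rofe-Beketov self-adjointness conditions \eqref{s-acond}, then yields the two representations $H'=H_{C,D}=\widetilde H_{\widetilde C,\widetilde D}$ asserted in $(i)$.

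For part $(ii)$ I would first check that $\widetilde\Pi$ meets the hypotheses of Proposition \ref{prkf}, namely $\widetilde H_0=(\widetilde H)_F$. Since $\ker\widetilde\Gamma_0=\ker\Gamma_0=\dom(H_0)$, one gets $\widetilde H_0=H_0=H_F$, and the Hankel-function asymptotics used in Theorem \ref{pr3'}$(iii)$ give $\lim_{x\downarrow-\infty}(\widetilde M(x)f,f)=-\infty$ for $f\neq0$, so Proposition \ref{prkf}$(iv)$ indeed yields $(\widetilde H)_F=\widetilde H_0$. With $\widetilde M(0)=M(0)_{op}\in[\widetilde\cH]$, Proposition \ref{prkf}$(vi)$ then states that $H'=\widetilde H_{\widetilde C,\widetilde D}$ is nonnegative if and only if $\widetilde\Theta-M(0)_{op}\geq0$, where $\widetilde\Theta$ is the self-adjoint relation determined via \eqref{rel} by the pair $(\widetilde C,\widetilde D)$. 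The final step is the routine congruence computation, carried out exactly as in Theorem \ref{pr1'}$(v)$: under \eqref{s-acond} the inequality $\widetilde\Theta\geq M(0)_{op}$ is equivalent to $(\widetilde C\widetilde D^*-\widetilde D M(0)_{op}\widetilde D^*h,h)\geq0$ for $h\in\dom(M(0)_{op})\setminus\{0\}$, which is precisely the stated criterion.

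The main obstacle I anticipate is part $(i)$: rather than the $C,D$-bookkeeping, the substantive content is the inclusion $\widetilde H\subseteq H'$, which rests on correctly transferring the form-domain condition $\dom(\mathrm{t}_{\Theta_{op}})\subset\dom(\mathrm{t}_0)$ into the multivalued-part inclusion $\mul(M(0))\subseteq\mul(\Theta)$, and on the identifications $\dom(H_F)=\ker\Gamma_0$ and $\dom(H_K)=\{f:(\Gamma_0f,\Gamma_1f)^\top\in M(0)\}$. Once this inclusion is secured, part $(ii)$ follows mechanically from the already-developed machinery for $\widetilde\Pi$.
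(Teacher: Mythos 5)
The paper states this proposition with no attached proof, presenting it as an immediate consequence of the preceding construction of $\widetilde{H}$, the triplet $\widetilde{\Pi}$, and the identity $\widetilde{M}(0)=M(0)_{op}$, and your argument correctly supplies exactly that intended derivation. In particular, your key step --- converting the form-domain condition of Proposition \ref{prkf}$(vi)$ into $\dom(\Theta)\subseteq\cH_1$ (legitimate here since $\dim\cH<\infty$, so $\dom(\mathrm{t}_{\Theta_{op}})=\overline{\dom(\Theta)}$ and $\dom(\mathrm{t}_0)=\cH_1$), hence $\mul(\Theta)\supseteq\cH_1^{\perp}=\mul(M(0))$ and $\widetilde{H}\subseteq H'\subseteq\widetilde{H}^{*}$ --- is sound, and the remaining ingredients ($(i)$ via Proposition \ref{propo} together with a Rofe-Beketov representation \eqref{rel}, \eqref{s-acond} in each of the two triplets; $(ii)$ via Proposition \ref{prkf}$(iv)$, $(vi)$ applied to $\widetilde{\Pi}$, using $\widetilde{M}(0)=M(0)_{op}\in[\widetilde{\cH}]$ and the same congruence computation as in Theorem \ref{pr1'}$(v)$) are precisely the machinery the paper has already set up for this purpose.
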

       \begin{remark}
$(i)$ The uniqueness of  nonnegative  self-adjoint  extension of
2D operator $H$,  in the  case $n=m=1$, was    established in
\cite{Ges01} and  \cite{Ada07}.

$(ii)$
 In \cite{Ada07},  V. Adamyan    noted  that, in  the  case $m>1$  and $n=1$,  the  operator  $H$
has  non-unique  nonnegative  self-adjoint extension.
 \end{remark}

\subsection{Spectrum  of the  self-adjoint  extensions of  the  minimal Schr\"{o}dinger  operator  and scattering  matrix }
Point spectrum  of  the self-adjoint  extensions of    $H$ is described
in the following  theorem.
\begin{theorem}\label{spec2}
Let  $H$ be the operator defined by  \eqref{eq2}, let  $\Pi$ be
the boundary triplet for  $H^*$ defined by \eqref{T2}-\eqref{T2'},
and let $M(\cdot)$ be  the corresponding Weyl function. Assume
also  that $H_\Theta$ is  a self-adjoint   extension of $H$.
  Then  point  spectrum of
   the self-adjoin  extension  $H_\Theta$  consists of  at most $nm$
      negative eigenvalues (counting multiplicities). Moreover,
$z\in\sigma_p(H_\Theta)\cap\bR_-$
 if and only  if $0\in\sigma_p(\Theta-M(z))$, i.e.,
 \begin{equation*}
 z\in\sigma_p(H_\Theta)\cap\bR_- \Leftrightarrow 0\in\sigma_p(C-DM(z)).
\end{equation*}
The corresponding   eigenfunction $\psi_z$ has the  form
\begin{equation*}\label{eigen'}
\psi_z=\sum\limits_{j=1}^mc_j\tfrac{i}4H^{(1)}_0(\sqrt{z}r_j),
\end{equation*}
where $(c_1,.., c_m)^\top$ is eigenvector  of  the   relation
$\Theta-M(z)$ corresponding  to zero eigenvalue.
\end{theorem}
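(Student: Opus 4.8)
The plan is to mirror the three-dimensional argument behind Theorem~\ref{spec3}, exploiting that $H_0=H^*\upharpoonright\ker(\Gamma_0)$ is the free Laplacian on $W^2_2(\bR^2,\bC^n)$ (Proposition~\ref{pr1a}(iii)), hence nonnegative with $\sigma(H_0)=\sigma_{\mathrm{ess}}(H_0)=[0,\infty)$ and $\bR_-\subset\rho(H_0)$. First I would locate the point spectrum: any eigenfunction of $H_\Theta$ at $z$ lies in $\ker(H^*-z)$, which by Proposition~\ref{pr2a} is spanned by the Hankel combinations $\sum_{j=1}^m c_j\tfrac{i}{4}H^{(1)}_0(\sqrt{z}r_j)$. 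For $z\ge 0$ the asymptotics $H^{(1)}_0(w)\sim\sqrt{2/(\pi w)}\,e^{i(w-\pi/4)}$ (and the logarithmic behaviour at $z=0$) destroy square-integrability in two dimensions, so $\ker(H^*-z)=\{0\}$ there; for $z\in\bR_-$ the branch $\sqrt{z}=i\sqrt{|z|}$ gives exponential decay and genuine $L^2$ elements. Since $H_\Theta=H_\Theta^*$ forces $\sigma_p(H_\Theta)\subset\bR$, this confines the point spectrum to $\bR_-$.

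Next I would invoke Proposition~\ref{prop_II.1.4_spectrum} with $i=p$: because $\bR_-\subset\rho(H_0)$, for every $z\in\bR_-$ one has $z\in\sigma_p(H_\Theta)\Leftrightarrow 0\in\sigma_p(\Theta-M(z))$. To reach the matrix form I would use representation \eqref{rel} of $\Theta$ together with \eqref{ext repr}: a nonzero $h$ satisfies $(h,0)\in\Theta-M(z)$ precisely when $(h,M(z)h)\in\Theta$, i.e. $Ch-DM(z)h=0$. Hence $0\in\sigma_p(\Theta-M(z))\Leftrightarrow\ker(C-DM(z))\neq\{0\}\Leftrightarrow 0\in\sigma_p(C-DM(z))$, which is the stated equivalence.

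For the eigenfunction I would set $\psi_z:=\gamma(z)\overline{c}$ with $\overline{c}=(c_1,\dots,c_m)^\top$; by \eqref{g2} this is exactly $\sum_{j=1}^m c_j\tfrac{i}{4}H^{(1)}_0(\sqrt{z}r_j)$. Since $\gamma(z)=(\Gamma_0\upharpoonright\kN_z)^{-1}$ and $M(z)=\Gamma_1\gamma(z)$ by \eqref{2.3A}, one gets $\Gamma_0\psi_z=\overline{c}$ and $\Gamma_1\psi_z=M(z)\overline{c}$. Then $\psi_z\in\dom(H_\Theta)$ is equivalent to $(\overline{c},M(z)\overline{c})\in\Theta$, i.e. $(\overline{c},0)\in\Theta-M(z)$, so $\overline{c}$ is an eigenvector of $\Theta-M(z)$ at $0$; this simultaneously produces $\psi_z$ and identifies the geometric multiplicity of $z$ with $\dim\ker(C-DM(z))$.

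Finally, the bound by $nm$ is the point I expect to require the most care. Two routes are available. The cleanest is to read off from \eqref{2.8} that $R_z(H_\Theta)-R_z(H_0)=\gamma(z)(\Theta-M(z))^{-1}\gamma(\overline{z})^*$ has rank at most $\dim\kH=nm$, so $H_\Theta$ is a finite-rank resolvent perturbation of $H_0$ with $\sigma_{\mathrm{ess}}(H_\Theta)=[0,\infty)$ and therefore at most $nm$ eigenvalues below its essential spectrum; combined with the first paragraph (which rules out eigenvalues in $[0,\infty)$) this gives the assertion. Alternatively, since $H_0=H_F$ by Theorem~\ref{pr3'}(iii), Proposition~\ref{prkf}(v) equates $\kappa_-(H_\Theta)$ with the number of negative eigenvalues of the form $\mathrm{t}_{\Theta_{op}}-\mathrm{t}_0$, which lives on a subspace of $\kH$ with $\dim\kH=nm$. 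The delicate feature of this second route, absent in the $3$D case, is that here $M(0)$ is a genuine relation with nontrivial multivalued part (as shown after Theorem~\ref{pr3'}), so one must pass to the operator part $\Theta_{op}$ and the associated forms rather than to a bounded $M(0)$; the first route sidesteps this entirely, which is why I would base the $\le nm$ count on the rank argument.
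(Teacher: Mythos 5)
Your proposal is correct and follows essentially the route the paper intends: the theorem is presented as an immediate consequence of Proposition~\ref{prop_II.1.4_spectrum} (applied for $z\in\bR_-\subset\rho(H_0)$, since $H_0\geq 0$ by Proposition~\ref{pr1a}(iii)), the representation \eqref{rel}--\eqref{ext repr} of $\Theta$ via $C,D$, and the formulas \eqref{W2}--\eqref{g2} for $M(\cdot)$ and $\gamma(\cdot)$, with $\psi_z=\gamma(z)\overline{c}\in\kN_z$ exactly as you construct it. Your additional details --- excluding nonnegative embedded eigenvalues via the large-$r$ asymptotics of $H^{(1)}_0$, and obtaining the bound $nm$ from the rank of the resolvent difference in \eqref{2.8} rather than from Proposition~\ref{prkf}(v) (which would be awkward here precisely because $M(0)$ has a nontrivial multivalued part, as you note) --- are sound fillings-in of steps the paper leaves implicit.
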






 As in the case of 3D  Schr\"{o}dinger  operator,
2D Schr\"{o}dinger operator $H$ is not  simple. Arguing  as above,
we obtain
\begin{theorem}
Scattering  matrix $\{\widehat{S}_\gT(z)\}_{z \in \bR_+}$ of the
scattering  system $\{\widehat{H}_\gT,\widehat{H}_0\}$ has the
form
\begin{gather*}
 \widehat{S}_\gT(x) = I_{nm}
+2i\sqrt{J(x)}\bigl(\gT -I_n\otimes\left(\tfrac1{2\pi}(\psi(1)-
\ln(\tfrac{\sqrt{x}}{2i}))\delta_{jk}+\widetilde{G}_{\sqrt{z}}(x_j-x_{k})\right)_{j,k=1}^m\bigr)^{-1}\sqrt{J(x)},\,\,
\\
J(x)=I_{n}\otimes\left(\tfrac1{4}J_0(\sqrt{x}|x_j-x_{k}|)\right)_{j,k=1}^m,\quad
x \in \bR_+,
\end{gather*}
where $J_0(\cdot)$  denotes  Bessel function.
\end{theorem}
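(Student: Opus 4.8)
The plan is to apply Theorem \ref{trep} to the simple part of $H$ and then to compute the boundary values of the imaginary part of the Weyl function \eqref{W2}. Since $H$ fails to be simple, I first decompose $H_\gT = \widehat{H}_\gT \oplus H_s$, where $\widehat{H}$ denotes the simple part of $H$ and $H_s$ is the self-adjoint summand that reduces $H$; the scattering system $\{\widehat{H}_\gT,\widehat{H}_0\}$ is then built over the simple symmetric operator $\widehat{H}$, whose boundary triplet inherits the Weyl function $M(\cdot)$ of \eqref{W2}. Theorem \ref{trep} therefore yields
\[
\widehat{S}_\gT(x) = I_{\kH_x} + 2i\sqrt{\imm(M(x+i0))}\bigl(\gT - M(x+i0)\bigr)^{-1}\sqrt{\imm(M(x+i0))}
\]
for a.e. $x \in \gL^M$, so that everything reduces to identifying $\imm(M(x+i0))$ with $J(x)$ (and noting $I_{\kH_x}=I_{nm}$ on $\ran J(x)$).

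The heart of the argument is this identification. For $x>0$ the chosen branch gives $\sqrt{x+i0}=\sqrt{x}>0$, and since the Hankel function and its logarithm are analytic on $(0,\infty)$ the boundary values exist for every $x>0$, whence $\gL^M \supseteq \bR_+$. For the off-diagonal entries I would use the splitting $H^{(1)}_0 = J_0 + iY_0$ into Bessel functions of the first and second kind, giving
\[
\widetilde{G}_{\sqrt{x}}(x_j - x_k) = \tfrac{i}{4}J_0(\sqrt{x}\,|x_j-x_k|) - \tfrac14 Y_0(\sqrt{x}\,|x_j-x_k|),
\]
so that $\imm\widetilde{G}_{\sqrt{x}}(x_j-x_k) = \tfrac14 J_0(\sqrt{x}\,|x_j-x_k|)$ for $j \neq k$. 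For the diagonal term I would evaluate the logarithm on the correct branch: since $\tfrac1{2i}=-\tfrac{i}{2}$ has argument $-\pi/2$, one has $\ln(\tfrac{\sqrt{x}}{2i}) = \ln(\tfrac{\sqrt{x}}{2}) - \tfrac{i\pi}{2}$, and hence $\imm\bigl(\tfrac1{2\pi}(\psi(1)-\ln(\tfrac{\sqrt{x}}{2i}))\bigr) = \tfrac1{2\pi}\cdot\tfrac{\pi}{2} = \tfrac14 = \tfrac14 J_0(0)$. Because $J_0(0)=1$ and $|x_j-x_j|=0$, the diagonal contribution equals precisely $\tfrac14 J_0(\sqrt{x}\,|x_j-x_j|)$, so both diagonal and off-diagonal entries are captured by one and the same formula.

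Collecting terms and using $M(z)=\bigoplus_{s=1}^n M_s(z)$ from \eqref{W2} then gives
\[
\imm(M(x+i0)) = I_n \otimes \left(\tfrac14 J_0(\sqrt{x}\,|x_j - x_k|)\right)_{j,k=1}^m = J(x),
\]
and substituting this into the displayed consequence of Theorem \ref{trep} produces the asserted representation. The only delicate point, and the one I expect to require care, is keeping the branches of the square root and logarithm consistent with the convention $\sqrt{1}=1$: it is exactly the value $i\pi/2$ of $-\ln(1/(2i))$ that forces the imaginary part of the regularized diagonal term to coincide with $\tfrac14 J_0(0)$, so that the single matrix $J(x)$ governs all entries; everything else is a routine matching against the small-argument expansion \eqref{eqas}.
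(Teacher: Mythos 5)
Your proposal is correct and follows exactly the paper's route: the paper proves this theorem merely by invoking Theorem \ref{trep} for the simple part (via $H_\Theta=\widehat H_\Theta\oplus H_s$, ``arguing as above'' from the 3D case) together with the Weyl function \eqref{W2}, leaving the identification $\imm M(x+i0)=J(x)$ implicit. Your computation of that imaginary part --- $H_0^{(1)}=J_0+iY_0$ for the off-diagonal entries and $\imm\bigl(-\tfrac1{2\pi}\ln(\tfrac{\sqrt
x}{2i})\bigr)=\tfrac14=\tfrac14 J_0(0)$ for the diagonal ones, with the branch fixed by $\sqrt{1}=1$ --- is precisely the detail the paper omits, and it is carried out correctly.
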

\ack{The author  thanks  M.M. Malamud for posing the problem and
permanent  attention to the  work. The  author also acknowledges
the  referees   for carefully reading of the preliminary version
of the manuscript and constructive remarks.}

\quad
\\
Nataly Goloshchapova,\\
\emph{Institute of Mathematics and Statistics, University of S\~ao Paulo},\\
\emph{ Rua do Mat\~ao, 1010},\\
\emph{S\~ao Paulo, 05508-090, Brazil}\\
 \emph{e-mail:} nataliia@ime.usp.br
\end{document}